\documentclass[11pt]{article}

\usepackage[T1]{fontenc}
\usepackage[utf8]{inputenc}
\usepackage{lmodern}
\usepackage{amsmath,amssymb,amsfonts,amsthm,mathtools}
\usepackage{mathrsfs}
\usepackage{enumitem}
\usepackage{geometry}
\usepackage{microtype}
\usepackage[colorlinks=true,linkcolor=blue,citecolor=blue,urlcolor=blue]{hyperref}
\newcommand{\Hess}{\operatorname{Hess}}
\setlist[itemize]{leftmargin=2em}
\setlist[enumerate]{leftmargin=2em}

\newtheorem{theorem}{Theorem}[section]
\newtheorem{proposition}[theorem]{Proposition}
\newtheorem{lemma}[theorem]{Lemma}
\newtheorem{corollary}[theorem]{Corollary}
\theoremstyle{definition}
\newtheorem{definition}[theorem]{Definition}

\theoremstyle{remark}
\newtheorem{remark}[theorem]{Remark}
\theoremstyle{definition}

\newcommand{\B}{\mathbb B}
\newcommand{\C}{\mathbb C}
\newcommand{\D}{\mathbb D}
\newcommand{\T}{\mathbb T}
\newcommand{\R}{\mathbb R}

\newcommand{\Aut}{\operatorname{Aut}}
\newcommand{\barB}{\operatorname{bar}_{B}}

\newcommand{\Id}{\operatorname{Id}}
\newcommand{\dd}{\,d}

\newcommand{\Real}{\operatorname{Re}}

\title{Bergman barycentric extensions of boundary homeomorphisms of the complex ball\thanks{\textbf{2020 Mathematics Subject Classification.} Primary 32Q45, 53C22; Secondary 30L10, 32V05, 32M15, 51M10. \textbf{Key words and phrases.} Bergman metric, complex hyperbolic space, Busemann barycenter, Douady--Earle extension, CR-quasisymmetric maps, quasi-isometries, quasiconformal mappings.}}
\author{David Kalaj \and Anton Gjokaj \and Vladimir Ja\'cimovi\'c}
\date{}

\newcommand{\wind}{\operatorname{wind}}
\begin{document}
\maketitle

\begin{abstract}
We define a Bergman barycentric extension of boundary homeomorphisms of the
complex unit ball by
\[
        E_B(f)(z)=\barB(f_*\sigma_z),
\]
where \(\barB\) is the Busemann barycenter for the Bergman metric and
\(\sigma_z\) is the visual, equivalently Poisson--Szeg\H{o}, measure based at
\(z\).  We prove well-definedness, prescribed boundary values, full
\(\Aut(\B^n)\)-naturality, and interior real-analyticity.

For every CR-quasisymmetric boundary homeomorphism, the Bergman barycentric
extension is a quasi-isometry of complex hyperbolic space; moreover
\(E_B(f^{-1})\) is a coarse inverse of \(E_B(f)\).  Under sufficiently small
positive CR cross-ratio distortion we obtain a sharper Tukia-type theorem:
for every \(M>1\), the extension is a real-analytic diffeomorphism satisfying
\[
        M^{-1}d_B(x,y)\le d_B(E_B(f)(x),E_B(f)(y))\le M d_B(x,y).
\]
In contrast, for \(n\ge2\) there are smooth CR-orientation-preserving
CR-quasisymmetric boundary diffeomorphisms whose barycentric extensions are
non-injective and, after normalization, have singular differential.  Thus
large-scale quasi-isometric control persists on the full CR-quasisymmetric
class, whereas local non-degeneracy requires stronger boundary control.
\end{abstract}

\section{Introduction}

The classical Douady--Earle extension assigns to every orientation-preserving
homeomorphism of the unit circle a conformally natural self-map of the disk
\cite{DE}; see also \cite{Petersen}.  The construction is barycentric: the
extended point is characterized as the zero of the gradient of an averaged
Busemann potential.  This point of view suggests a natural extension problem
for the complex unit ball equipped with its Bergman metric.

Let
\[
        \B^n=\{z\in\C^n:|z|<1\},
        \qquad S^{2n-1}=\partial\B^n,
\]
and let \(d_B\) denote the Bergman distance.  For \(z\in\B^n\), let
\(\sigma_z\) be the visual probability measure based at \(z\); in ball
coordinates it is the Poisson--Szeg\H{o} measure
\[
        d\sigma_z(\xi)=
        \left(\frac{1-|z|^2}{|1-\langle z,\xi\rangle|^2}\right)^n
        d\sigma(\xi),
\]
where \(\sigma\) is normalized surface measure on \(S^{2n-1}\).  If
\(\barB(\mu)\) denotes the Busemann barycenter of a probability measure
\(\mu\) on the boundary, we define, for a homeomorphism
\(f:S^{2n-1}\to S^{2n-1}\),
\begin{equation}\label{main-extension-intro}
        E_B(f)(z)=\barB(f_*\sigma_z).
\end{equation}
The relevant naturality group is \(\Aut(\B^n)\), while the boundary geometry
is the CR, equivalently Heisenberg, geometry of \(S^{2n-1}\).  Related
conformal and holomorphic barycenters in hyperbolic balls were introduced by
Ja\'cimovi\'c and Kalaj \cite{JacimovicKalaj2025}.

The first result establishes the basic properties of the construction.

\begin{theorem}[Bergman barycentric extension]\label{main-theorem}
Let \(f:S^{2n-1}\to S^{2n-1}\) be a homeomorphism.  Then \(E_B(f)\) is
well-defined and real-analytic in \(\B^n\), and it extends continuously to
\(\overline{\B^n}\) with boundary value \(f\).  Moreover, for all
\(A,B\in\Aut(\B^n)\),
\begin{equation}\label{naturality-main}
        E_B(A\circ f\circ B)=A\circ E_B(f)\circ B.
\end{equation}
In particular, if \(f=A|_{S^{2n-1}}\) for some \(A\in\Aut(\B^n)\), then
\(E_B(f)=A\).
\end{theorem}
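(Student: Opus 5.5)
The plan is to follow the Douady--Earle scheme, with the Bergman Busemann function replacing the hyperbolic one. For \(\xi\in S^{2n-1}\) and \(w\in\B^n\), the Busemann function normalized at \(0\) has the explicit form
\[
        \beta_\xi(w)=\frac12\log\frac{|1-\langle w,\xi\rangle|^2}{1-|w|^2}
\]
(up to the normalizing constant of the Bergman metric). Note that \(e^{-2n\beta_\xi(w)}\) is exactly the Poisson--Szeg\H{o} kernel. For a probability measure \(\mu\) without atoms, define \(F_\mu(w)=\int\beta_\xi(w)\,d\mu(\xi)\). The barycenter \(\barB(\mu)\) is then the unique critical point of \(F_\mu\).

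\textbf{Well-definedness.} This comes in three pieces.
\begin{enumerate}
\item \emph{Strict convexity.} Each \(\beta_\xi\) is convex along Bergman geodesics, since complex hyperbolic space is CAT\((-1)\). It is strictly convex in every direction except the geodesic direction pointing at \(\xi\). Because \(f_*\sigma_z\) has full support and no atoms (as \(f\) is a homeomorphism), \(\Hess F_\mu\) is positive definite.
\item \emph{Properness.} As \(w\to\eta\in S^{2n-1}\), we have \(\beta_\xi(w)\to+\infty\) for \(\xi\neq\eta\). The term at \(\xi=\eta\) is bounded below by \(-\tfrac12\log 2+\) a geodesic-distance term. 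Fatou's lemma and the absence of atoms then give \(F_\mu\to+\infty\).
\item \emph{Uniqueness.} A unique minimizer exists, and it is the unique zero of \(\nabla F_\mu\).
\end{enumerate}

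\textbf{Naturality.} This rests on the cocycle identity \(\beta_{A\xi}(Aw)=\beta_\xi(w)+c_A(\xi)\), whose constant does not depend on \(w\). It gives \(\barB(A_*\mu)=A(\barB\mu)\) for \(A\in\Aut(\B^n)\), since the constant integrates to a constant. The visual measures transform as \(B_*\sigma_z=\sigma_{Bz}\). Hence
\[
        (AfB)_*\sigma_z=A_*f_*\sigma_{Bz},
\]
which yields \eqref{naturality-main}. Taking \(f=\Id\), we get \(E_B(\Id)(z)=\barB(\sigma_z)=z\): by naturality it suffices to check this at \(z=0\), and there rotational symmetry of \(\sigma\) forces the unique critical point to be fixed by \(U(n)\), hence to equal \(0\). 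The final claim then follows by taking \(f=A\), \(B=\Id\).

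\textbf{Real-analyticity.} Apply the implicit function theorem to
\[
        G(z,w)=\int\nabla_w\beta_{f(\xi)}(w)\,P(z,\xi)\,d\sigma(\xi)=0 .
\]
\(G\) is real-analytic in \((z,w)\) because both the kernel \(P(z,\xi)\) and \(\nabla\beta\) are real-analytic in the parameters, uniformly in \(\xi\). Moreover \(\partial_wG=\Hess F_\mu\) is invertible by strict convexity. The zero is unique, so the analytic local solutions glue to the global map \(E_B(f)\).

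\textbf{Boundary values (main obstacle).} Let \(z_k\to\eta\in S^{2n-1}\). Then \(\sigma_{z_k}\) concentrates at \(\eta\), so \(\mu_k=f_*\sigma_{z_k}\) tends weakly to \(\delta_{f(\eta)}\). The step I expect to be hardest is showing that the barycenter of a measure with mass concentrating near a point \(p\) lies near \(p\), quantitatively and uniformly. My approach is to post-compose with an automorphism \(T\) that pushes the barycenter to \(0\) and argue by contradiction, as in Douady--Earle. Concretely, suppose \(w_k=\barB(\mu_k)\) stays away from a neighborhood \(U\) of \(p=f(\eta)\) in \(\overline{\B^n}\). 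Normalize by \(T_k\in\Aut(\B^n)\) with \(T_k w_k=0\). The measures \((T_k)_*\mu_k\) then have barycenter \(0\), and a measure with barycenter \(0\) cannot put mass more than \(1/2\) in a small cap. This follows from the balancing condition \(\int\nabla\beta_\xi(0)\,d\nu=0\), where \(\nabla\beta_\xi(0)=-\xi\) up to a constant. On the other hand, since \(w_k\) stays away from \(p\), the maps \(T_k\) keep the image of a neighborhood of \(p\) small, and that neighborhood carries mass \(\to1\). This is a contradiction. The delicate point is controlling \(T_k\) near \(p\) when \(w_k\) itself tends to a different boundary point, which calls for a careful but elementary estimate on the action of \(\Aut(\B^n)\) on \(S^{2n-1}\).
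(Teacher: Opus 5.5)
Your proposal is correct. For well-definedness, naturality and real-analyticity it is the paper's argument:
\begin{itemize}
\item geodesic convexity plus properness (Proposition~\ref{existence-uniqueness-barycenter});
\item the Busemann cocycle together with $A_*\sigma_z=\sigma_{A(z)}$ (Theorem~\ref{naturality-theorem});
\item the implicit function theorem, with $D_w\Phi_f$ identified with a positive definite averaged Hessian (Theorem~\ref{smooth-regularity}).
\end{itemize}

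One adjustment is needed in the properness step. Fatou's lemma cannot be applied to $\beta_\xi(w_k)$ itself. The bound $\beta_\xi(w)\ge \tfrac12\log\frac{1-|w|}{1+|w|}=:-\rho(w)$ is uniform in $\xi$, but it tends to $-\infty$ with $k$, so it is not a $k$-independent integrable minorant. Fatou does work for $\beta_\xi(w_k)/\rho(w_k)+1\ge0$. This quantity tends to $2$ for $\xi\ne\eta$, which gives $\liminf_k F_\mu(w_k)/\rho(w_k)\ge 1-2\mu(\{\eta\})$. That is a tidy alternative to the paper's choice of a neighbourhood $U$ of $\eta$ with $\mu(U)<1/2$.

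The genuine difference is the boundary step. The paper treats two cases separately:
\begin{itemize}
\item An interior cluster point is excluded because it would be a global minimizer of $\beta_p$, and $\beta_p$ has none.
\item A boundary cluster point $q\ne p$ is excluded via the rescaled equation $\int (1-|w_j|^2)\xi\,(1-\langle\xi,w_j\rangle)^{-1}\,d\mu_j=w_j$. Its left side tends to $0$ while the right side has norm tending to $1$.
\end{itemize}
Your normalization by the involution $\varphi_{w_k}\in\Aut(\B^n)$ exchanging $w_k$ and $0$ handles both cases at once. The balancing condition $\int\varphi_{w_k}\,d\mu_k=0$ is equivalent to the coordinate barycenter equation at $w_k$. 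The ``elementary estimate'' you anticipate is just this: $\varphi_{w_k}\to\varphi_{w_\infty}$ uniformly if $w_k\to w_\infty\in\B^n$, while $\varphi_{w_k}\to q$ uniformly on compact subsets of $S^{2n-1}\setminus\{q\}$ if $w_k\to q$. Since $\mu_k(V)\to1$ for a neighbourhood $V$ of $p$, the integral tends to a unit vector instead of $0$.

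A small precision: balancing at $0$ bounds the mass of a cap $\{\Real\langle\xi,c\rangle\ge 1-\epsilon\}$ by $1/(2-\epsilon)$, not $1/2$. This does not matter, since all you use is that the mass tends to $1$.

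Your route is uniform and conceptual, but it needs the explicit involutions. The paper's route stays in ball coordinates and uses only the variational characterization in the interior case.
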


Our first main geometric result is non-perturbative and concerns the full
CR-quasisymmetric class.  General boundary theory for Gromov hyperbolic spaces
guarantees the existence of quasi-isometric extensions of quasisymmetric
boundary maps \cite{BonkSchramm}.  Here the point is that the canonical map
\(E_B(f)\) itself is a quasi-isometry.

\begin{theorem}[Quasi-isometry theorem]\label{theorem-general-quasi-isometry}
Let \(n\ge2\), and let
\(f:S^{2n-1}\to S^{2n-1}\) be \(\eta\)-CR-quasisymmetric.  Then
\(F=E_B(f):(\B^n,d_B)\to(\B^n,d_B)\) is a quasi-isometry.  More precisely,
there exist \(L\ge1\) and \(C\ge0\), depending only on \(n\) and \(\eta\),
such that
\begin{equation}\label{general-quasi-isometry-estimate}
        L^{-1}d_B(x,y)-C
        \le d_B(F(x),F(y))
        \le Ld_B(x,y)+C
\end{equation}
for all \(x,y\in\B^n\).  If \(G=E_B(f^{-1})\), then there exists
\(D=D(n,\eta)<\infty\) such that
\begin{equation}\label{coarse-inverse-estimates}
        d_B(G(F(x)),x)\le D,
        \qquad
        d_B(F(G(y)),y)\le D
\end{equation}
for all \(x,y\in\B^n\).  Thus \(E_B(f^{-1})\) is a coarse inverse of
\(E_B(f)\).
\end{theorem}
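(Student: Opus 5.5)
The plan is to reduce everything to a uniform estimate at the base point \(0\), using the naturality \eqref{naturality-main} of Theorem~\ref{main-theorem} and the \(\Aut(\B^n)\)-invariance of the class of \(\eta\)-CR-quasisymmetric maps. The latter holds because automorphisms act on \(S^{2n-1}\) as CR-Möbius maps and preserve Korányi cross-ratios. Fix \(x\in\B^n\) and choose \(A,B\in\Aut(\B^n)\) with \(B(0)=x\) and \(A(F(x))=0\). Then \(g=A\circ f\circ B\) is a normalized map with \(E_B(g)(0)=0\). The first step is a \emph{compactness lemma}: the family \(\mathcal F_\eta\) of \(\eta\)-CR-quasisymmetric homeomorphisms \(g\) with \(E_B(g)(0)=0\) is equicontinuous, has equicontinuous inverses, and is compact in the uniform topology.

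To prove the lemma, I would first show that \(\barB(g_*\sigma)=0\) forces \(g_*\sigma\) to be balanced: no cap of small Korányi radius carries almost all of the mass, and a fixed fraction of the mass escapes every such cap. This follows from the Busemann gradient equation, since the gradient of the averaged Busemann potential at \(0\) is \(\int \xi\, d(g_*\sigma)(\xi)\) up to constants, which must vanish. Combining this with the \(\eta\)-quasisymmetry applied to three points, as in the classical Tukia/Väisälä normalization argument, gives a uniform lower bound on the mutual distances of \(g(\xi_1),g(\xi_2),g(\xi_3)\) for some fixed triple. The standard three-point normalization then yields equicontinuity, and Arzelà–Ascoli, together with closure of the \(\eta\)-condition under uniform limits, gives compactness.

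The second step is local Lipschitz control. Continuity of \(E_B\) in \(g\) (uniform convergence) and in \(z\), which follows from the implicit function theorem applied to the strictly convex averaged Busemann function, combined with compactness, shows that
\[
\sup_{g\in\mathcal F_\eta}\ \sup_{d_B(w,0)\le 1} d_B(E_B(g)(w),0)\le K(n,\eta).
\]
Transporting this back by naturality gives \(d_B(F(x),F(y))\le K\) whenever \(d_B(x,y)\le1\). Chaining along a geodesic then yields the upper bound \(d_B(F(x),F(y))\le K d_B(x,y)+K\).

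For the lower bound and the coarse inverse, apply the same argument to \(f^{-1}\), which is \(\eta'\)-CR-quasisymmetric with \(\eta'(t)=1/\eta^{-1}(1/t)\). It then suffices to prove \(d_B(G(F(x)),x)\le D\). Normalizing as before, with \(x=0\) and \(F(0)=0\), we must show \(E_B(g^{-1})(0)\) stays in a compact set as \(g\) ranges over \(\mathcal F_\eta\). This again follows from compactness, since \(g\mapsto g^{-1}\) is continuous on the family by the equicontinuity of inverses. The point \(E_B(g^{-1})(0)\) depends continuously on \(g\) and lies in \(\B^n\), so it is bounded in \(d_B\). The lower quasi-isometry bound then follows formally: \(d_B(x,y)\le d_B(GFx,GFy)+2D\le K d_B(Fx,Fy)+K+2D\).

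The main obstacle is the compactness lemma, specifically ruling out degeneration in which \(g_*\sigma\) concentrates near a point while the barycenter stays at \(0\). I would handle this by showing that if \(g_k\) converges to a map collapsing a set of positive measure, the barycenter must escape to the boundary, which is the standard atom-mass-\(>1/2\) criterion for Busemann barycenters. The required mass \(>1/2\) on a shrinking cap would come from quasisymmetry via doubling of \(g_*\sigma\), using Ahlfors-regularity of \(\sigma\) on the Heisenberg-type sphere.
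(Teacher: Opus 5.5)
Your overall architecture is the paper's: automorphic normalization, compactness of a barycentrically normalized class, joint continuity giving a uniform bound over the unit Bergman ball, chaining along geodesics, compactness of the inverse family for the coarse inverse, and the triangle inequality for the lower bound. The gap is that you normalize inside the wrong class.

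The claim that \(\eta\)-CR-quasisymmetry is \(\Aut(\B^n)\)-invariant with the same \(\eta\) is false. Invariance of the CR cross-ratio gives invariance of \emph{quasi-M\"obius} control with the same distortion function, not of quasisymmetric control. To see this, let \(g\) be an \(\eta\)-quasisymmetric self-map of the sphere and \(\Delta\) the \(d_{CR}\)-diameter. Comparing with a point at distance at least \(\Delta/2\) from \(\xi\) gives \(d_{CR}(g(\xi),g(\zeta))\le\Delta\,\eta(2d_{CR}(\xi,\zeta)/\Delta)\). So any family with a fixed \(\eta\) is equicontinuous, and so are the inverses. If your invariance claim held, applying it to the identity would make the boundary action of the noncompact group \(\Aut(\B^n)\) equicontinuous. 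That is absurd: an automorphism sending \(0\) to \(re_1\), with \(r\to1\), inflates a tiny cap around \(-e_1\) to almost the whole sphere. Hence \(g=A\circ f\circ B\) need not lie in \(\mathcal F_\eta\), and your compactness lemma says nothing about the maps you actually need to control. A symptom of this: for \(\mathcal F_\eta\) itself the barycentric normalization is superfluous, since the estimate above already makes the family compact. So your ``main obstacle'' never arises there.

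The repair is what the paper does.
\begin{enumerate}
\item Convert \(\eta\)-quasisymmetry into \(\theta\)-quasi-M\"obius control with \(\theta=\theta(\eta)\).
\item Then \(A\circ f\circ B\) is \(\theta\)-quasi-M\"obius with the same \(\theta\), because automorphisms preserve the CR cross-ratio. Inverses of \(\theta\)-quasi-M\"obius maps are quasi-M\"obius with distortion depending only on \(\theta\).
\item The key lemma becomes compactness of \(\mathcal Q^0_\theta=\{g\ \theta\text{-quasi-M\"obius}:E_B(g)(0)=0\}\), and here the normalization does real work. By the compactness alternative for uniformly quasi-M\"obius maps, a sequence either has a subsequence converging uniformly to a homeomorphism, or converges to a constant \(b\in S^{2n-1}\) locally uniformly off one point.
\item The second case is excluded in one line. \(E_B(g_j)(0)=0\) forces \(\int g_j\,d\sigma=0\), whereas dominated convergence would give \(\int g_j\,d\sigma\to b\neq0\).
\end{enumerate}
This is the concentration scenario from your last paragraph, now placed where it belongs, and it needs no doubling or Ahlfors-regularity argument. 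With this substitution, the rest of your proof goes through as you describe.
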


The preceding conclusion is deliberately large-scale.  It does not imply
injectivity, local non-degeneracy, or differential quasiconformality.  Under
small automorphism-invariant boundary distortion, however, the conclusion
becomes much stronger.  We call a CR-orientation-preserving homeomorphism
\(f\) \(\varepsilon\)-almost CR-M\"obius if its CR metric cross-ratio changes
by at most the multiplicative factor \(e^{\varepsilon}\); the precise
definition is given in Section~\ref{section-small-distortion}.

\begin{theorem}[Tukia-type almost-isometry theorem]
\label{theorem-tukia-type-almost-isometry}
Let \(n\ge2\).  For every \(M>1\) there exists
\(\varepsilon_0=\varepsilon_0(M,n)>0\) with the following property.  If
\(0<\varepsilon<\varepsilon_0\) and
\(f:S^{2n-1}\to S^{2n-1}\) is a CR-orientation-preserving
\(\varepsilon\)-almost CR-M\"obius homeomorphism, then
\(F=E_B(f):\B^n\to\B^n\) is a real-analytic diffeomorphism and
\begin{equation}\label{tukia-type-distance-estimate}
        M^{-1}d_B(x,y)
        \le d_B(F(x),F(y))
        \le M d_B(x,y)
\end{equation}
for all \(x,y\in\B^n\).  In particular, \(F\) is Bergman-quasiconformal and
\[
        K_B(F)\le M^2.
\]
\end{theorem}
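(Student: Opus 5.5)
The plan is to reduce everything to a compactness argument around the identity, using the naturality identity \eqref{naturality-main} of Theorem~\ref{main-theorem}. Fix $x\in\B^n$ and choose automorphisms $B$ with $B(0)=x$ and $A$ with $A(F(x))=0$. Put $g=A\circ f\circ B$. Then $E_B(g)=A\circ F\circ B$ and $E_B(g)(0)=0$. The CR cross-ratio is $\Aut(\B^n)$-invariant, so $g$ is again $\varepsilon$-almost CR-M\"obius. It is also normalized, in the sense that the barycenter of $g_*\sigma$ is $0$. Every local quantity of $F$ at $x$ (its differential, and its distance distortion on Bergman balls around $x$) is therefore equal to the same quantity for $E_B(g)$ at $0$, because the automorphisms are Bergman isometries.

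The first key step is a rigidity/compactness lemma. As $\varepsilon\to0$, any normalized $\varepsilon$-almost CR-M\"obius $g$ is uniformly close on $S^{2n-1}$ to a unitary map $U\in U(n)$. To prove it, note that almost-M\"obius maps are $\eta$-CR-quasisymmetric with a uniform $\eta$. The barycenter normalization prevents the degeneration into a point mass, so the family is equicontinuous. By Arzel\`a--Ascoli, a limit as $\varepsilon\to0$ preserves the CR cross-ratio exactly, and is therefore the boundary value of an automorphism. Here CR-orientation preservation excludes antiholomorphic maps, and for $n\ge2$ cross-ratio-preserving homeomorphisms of the Heisenberg sphere come from $\Aut(\B^n)$ or its conjugate. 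The normalization then forces this automorphism to fix $0$, so it is unitary. The second step is continuity of the barycenter. The map $\mu\mapsto\barB(\mu)$ is real-analytic in the weak-$*$ topology on measures with no large atoms, by the implicit function theorem applied to the nondegenerate Hessian of the averaged Busemann potential. Consequently $E_B(g)\to U$ in $C^1$, indeed in $C^k$, on a fixed compact ball $\overline{B_B(0,R)}$, uniformly over the normalized family.

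With this in hand, the local estimates follow. For $\varepsilon<\varepsilon_0(M,n)$, the differential $dE_B(g)(0)$ is $\delta$-close to a unitary map. Hence $dF(x)$ is invertible, with Bergman operator norm and conorm in $[M^{-1},M]$, at every $x$. This gives $K_B(F)\le M^2$ and shows that $F$ is a local diffeomorphism. Integrating the differential bound along geodesics gives the upper bound in \eqref{tukia-type-distance-estimate}. For global injectivity and the lower bound, Theorem~\ref{theorem-general-quasi-isometry} says $F$ is proper, since it is a quasi-isometry, and it extends continuously to the homeomorphism $f$ on the boundary. A proper local diffeomorphism of $\B^n$ is a covering map. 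Since $\B^n$ is simply connected, $F$ is a diffeomorphism, real-analytic by Theorem~\ref{main-theorem}. The lower bound then comes from applying the same argument to $F^{-1}$: its differential at $F(x)$ is the inverse of $dF(x)$, so $\|dF^{-1}\|\le M$, and integrating along geodesics in the target gives $d_B(x,y)\le M\,d_B(F(x),F(y))$.

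The main obstacle is the first step: the uniform rigidity statement that small cross-ratio distortion together with barycentric normalization forces closeness to $U(n)$. This includes the fact that exact CR-cross-ratio preservers are automorphisms, which must be proved for homeomorphisms with no a priori smoothness. I would first try to prove it via the Liouville-type theorem for 1-quasiconformal maps on the Heisenberg group. An alternative is the direct route: a cross-ratio-preserving map sends chains to chains and is therefore in $\Aut(\B^n)$ or its conjugate. A secondary difficulty is ensuring the $C^1$ convergence is uniform in the base point; this is exactly what the normalization via naturality achieves.
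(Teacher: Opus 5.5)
Your proposal is correct and follows essentially the paper's proof. The shared steps are automorphic normalization; the small-distortion compactness lemma, in which collapse is excluded by $\int g\,d\sigma=0$, the limit is identified by the CR Liouville theorem, and $E_B(G)(0)=0$ forces $G\in U(n)$; closeness of $dE_B(g)_0$ to a unitary map; and the proper local diffeomorphism/covering argument with integration of the singular-value bounds for $F$ and $F^{-1}$.

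The differences are minor. The paper makes your soft $C^1$-continuity step explicit through $dE_B(g)_0=\mathcal A_g^{-1}\mathcal B_g$, with $\|\mathcal B_g-I\|\le\sqrt{2n}\,\delta$ and $\|\mathcal A_g-I\|\le2\delta$. Also, your claim that almost-M\"obius maps are uniformly quasisymmetric is false before normalization, since ball automorphisms already violate it. Equicontinuity must instead come, as in the paper, from the quasi-M\"obius compactness alternative combined with the barycentric normalization.
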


The mechanism behind the perturbative theorem is a normalized first-variation
formula.  Given \(F=E_B(f)\) and \(z_0\in\B^n\), automorphism naturality allows
one to choose \(A,B\in\Aut(\B^n)\) so that
\[
        g=A\circ f\circ B|_{S^{2n-1}},
        \qquad E_B(g)(0)=0.
\]
For such normalized data we prove
\begin{equation}\label{intro-factorization}
        dE_B(g)_0=\mathcal A_g^{-1}\mathcal B_g,
\end{equation}
where
\[
        \mathcal B_gv
        =2n\int_{S^{2n-1}}
        g(\xi)\Real\langle v,\xi\rangle\,d\sigma(\xi).
\]
Small CR cross-ratio distortion implies, after normalization, uniform
closeness to a unitary map.  Formula \eqref{intro-factorization} then yields
uniform two-sided control of the Bergman singular values of the differential.

The small-distortion hypothesis cannot be removed from the local statement.
The final main result gives a counterexample inside the CR-quasisymmetric
class.

\begin{theorem}[Counterexample and sharpness]\label{theorem-cr-qs-counterexample}
Let \(n\ge2\).  There exists a CR-orientation-preserving CR-quasisymmetric
homeomorphism
\[
        h:S^{2n-1}\to S^{2n-1}
\]
such that the Bergman barycentric extension
\[
        E_B(h):\B^n\to\B^n
\]
is not injective.
\end{theorem}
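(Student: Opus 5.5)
The plan is to build \(h\) from smooth CR-quasisymmetric boundary diffeomorphisms and read off non-injectivity from the normalized first-variation formula \eqref{intro-factorization}. First I produce a smooth CR-orientation-preserving CR-quasisymmetric diffeomorphism \(g\) with \(E_B(g)(0)=0\) and \(\mathcal B_g\) singular, so that \(dE_B(g)_0\) is singular; afterwards I perturb \(g\) so that the degeneracy becomes a genuine fold.

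\emph{Step 1 (candidate family).} Since every smooth diffeomorphism of the compact sphere \(S^{2n-1}\) that preserves the contact distribution \(\ker(\partial-\bar\partial)|\xi|^2\) is a contactomorphism with bounded distortion, it is CR-quasisymmetric. (Equivalently, it is quasiconformal for the Carnot--Carathéodory metric, and on a compact space this gives quasisymmetry.) I therefore look for \(g\) among contactomorphisms commuting with a large symmetry group, so that the barycenter at \(0\) is forced. Concretely, \(g\) will commute with the diagonal torus action and the coordinate permutations, and with \(\xi\mapsto-\xi\). Then \(g_*\sigma\) is invariant under \(-\Id\), and uniqueness of the Busemann barycenter gives \(E_B(g)(0)=0\).

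For the family I take flows of contact Hamiltonians \(H_t\) on \(S^{2n-1}\) invariant under this group, for example \(H=\varphi(|\xi_1|^2,\dots,|\xi_n|^2)\). These generate maps of the form
\[
\xi_j\mapsto e^{i\theta_j(|\xi_1|^2,\dots)}\,\rho_j(\xi),
\]
with the moduli also moving. By the symmetry, \(\mathcal B_g\) is diagonal in the real basis \(e_j,ie_j\). Writing \(\mathcal B_g e_1=\lambda e_1+\mu\, ie_1\), these entries are explicit integrals of the form
\[
2n\int g_1(\xi)\,\Real\xi_1\,d\sigma.
\]
I choose the Hamiltonian so that \(\rho_1\) pushes mass of \(|\xi_1|\) toward \(0\), and so that the phase \(\theta_1\) rotates by roughly \(\pi/2\) on the region where \(|\xi_1|\) is large. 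Then \(\int g_1\bar\xi_1\,d\sigma\) passes through \(0\) along the one-parameter family. By continuity (intermediate value on the real part, with the imaginary part killed by an added reflection symmetry \(\xi\mapsto\bar\xi\) composed with CR-orientation-preserving data), there is a parameter \(t_0\) with \(\mathcal B_{g_{t_0}}\) singular. Since \(\mathcal A_g\) is invertible (it is the Hessian of a strictly convex potential), \(dE_B(g_{t_0})_0\) is singular.

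\emph{Step 2 (from singular to non-injective).} A singular differential alone does not give non-injectivity, so I argue by degree. \(E_B(g_t)\) extends continuously to \(\overline{\B^n}\) with boundary value \(g_t\) (Theorem~\ref{main-theorem}), so it has degree \(1\) relative to the boundary. I choose the family so that beyond \(t_0\) the determinant \(\det dE_B(g_t)_0\) changes sign. This is arranged by continuing the rotation past the critical value, where \(\lambda\) becomes negative. Take such a \(t_1>t_0\) and set \(h=g_{t_1}\). Then \(F=E_B(h)\) is real-analytic, has boundary degree \(+1\) (because \(h\) is CR-orientation preserving), and has a point where the Jacobian is negative. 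At a regular value near \(F(0)\), the local degree count \(\sum\operatorname{sign}\det dF=1\) then forces at least two preimages with positive Jacobian besides the negative one. Hence \(F\) is not injective. Since \(h\) is a smooth contactomorphism, it is CR-quasisymmetric, which finishes the argument.

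\emph{Main obstacle.} The hard step is Step 1: making \(\lambda=\Real\int g_1\bar\xi_1\,d\sigma\) actually change sign while \(g\) stays a contactomorphism. The contact condition couples the phases and the moduli, so I cannot prescribe them independently. I would first try the explicit flow of \(H=\chi(|\xi_1|^2)\). This flow rotates \(\xi_1\) by angle \(t\chi'(|\xi_1|^2)\) and rotates the remaining coordinates by a compensating Reeb-type phase. Because it preserves each \(|\xi_j|\), it reduces \(\lambda\) to the one-variable integral
\[
\int \cos\bigl(t\chi'(r^2)\bigr)\,r^2\,d\nu(r),
\]
where \(\nu\) is the pushforward of \(\sigma\) under \(\xi\mapsto|\xi_1|\). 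Choosing \(\chi'\) steep near \(r=1\) makes this integral negative for suitable \(t\).
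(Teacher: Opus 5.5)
\paragraph{Gap in Step~2.} Your Step~2 cannot work, and the obstruction comes from the symmetry you impose in Step~1. Your maps $g_t$ commute with the diagonal torus $T^n\subset U(n)$. By naturality, $E_B(g_t)$ then commutes with every diagonal unitary $D$ and fixes $0$. Differentiating $E_B(g_t)\circ D=D\circ E_B(g_t)$ at $0$ shows that $dE_B(g_t)_0$ commutes with all of $T^n$. Hence it preserves each complex line $\C e_j$ and is complex-linear on it: $dE_B(g_t)_0=\operatorname{diag}(c_1,\dots,c_n)$ with $c_j\in\C$.

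Concretely, such a $g$ has the form $g_j(\xi)=e^{i\Theta_j}\xi_j$ with $\Theta_j$ depending only on the moduli. Then $\mathcal A_g=I$, and $\mathcal B_g$ acts on $\C e_j$ as multiplication by $c_j=n\int|\xi_j|^2e^{i\Theta_j}\,d\sigma$; in particular $\mathcal B_g(ie_1)=i\,\mathcal B_ge_1$. Consequently the real Jacobian at the origin is $\prod_j|c_j|^2\ge0$. It can vanish at a parameter where some $c_j$ passes through $0$, but it never becomes negative. ``Continuing the rotation past the critical value, where $\lambda$ becomes negative'' only turns the $\C e_1$-block into a rotation-dilation (or $\lambda I_2$), whose determinant is positive. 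So the negative-Jacobian point your degree count needs is never produced. As you note yourself, a singular differential alone gives no non-injectivity.

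\paragraph{Problems in Step~1 and the missing idea.} Step~1 also does not work as described. Every $T^n$-equivariant contactomorphism preserves all moduli: evaluating $g^*\theta=\lambda\theta$ on the torus generators $i\xi_je_j$ gives $|g(\xi)_j|^2=\lambda(\xi)|\xi_j|^2$, and summing over $j$ forces $\lambda\equiv1$. So nothing in your class ``pushes mass of $|\xi_1|$ toward $0$''. Requiring in addition that $g$ commute with $\xi\mapsto\overline\xi$ forces $e^{2i\Theta_j}\equiv1$, i.e.\ $g$ is unitary, so conjugation cannot be used to kill $\mu$. A one-parameter curve $t\mapsto c_1(t)\in\C$ generically misses $0$. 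Also, $\chi(|\xi_1|^2)$ is not permutation invariant.

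The missing idea is to impose a symmetry group that fixes a whole real geodesic rather than a single point, and to let the $\xi_1$-coordinate move. The paper uses only $U(n-1)$ (acting on $z'$) and complex conjugation, so that $E_B(h)(te_1)=u(t)e_1$ with $u$ continuous. It takes $h$ to be the contact lift of an $\Omega_{\D}$-area-preserving disk map that is the identity near $\pm1$. This map carries a set of $\nu_{t_0}$-mass greater than $1/(1+a)$ into $\{\Real w<-a\}$, and the reflected set into $\{\Real w>a\}$. The sign of $\int\Real\eta_1\,d(h_*\sigma_{\pm t_0e_1})$ determines the sign of the derivative of the averaged Busemann potential along the diameter at $0$, and gives $u(t_0)<0<u(-t_0)$. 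Together with $u(t)\to\pm1$ as $t\to\pm1$, the intermediate value theorem yields $a<-t_0<t_0<b$ with $E_B(h)(ae_1)=0=E_B(h)(be_1)$. No differential or degree argument is needed.
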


After automorphic normalization, the same construction yields a
CR-orientation-preserving CR-quasisymmetric homeomorphism \(g\) with
\(E_B(g)(0)=0\) and singular differential \(dE_B(g)_0\).  Thus the
quasi-isometry theorem is genuinely a large-scale result: fixed
CR-quasisymmetric distortion controls the extension coarsely, but does not in
general control the smallest singular value of its differential.

The paper is organized as follows.  Section~2 collects the Bergman and CR
preliminaries, defines the barycentric extension, and develops the normalized
first-variation formula and automorphic normalization used throughout the
paper.  Section~3 contains the proofs of the main positive results: the basic
extension theorem, the non-perturbative quasi-isometry theorem, and the
Tukia-type almost-isometry theorem.  Section~4 constructs the contact
``tennis-ball'' counterexample, proves the failure of injectivity and local
non-collapse in the full CR-quasisymmetric class, and concludes with a brief
discussion of the large-scale/local dichotomy.

\section{Preliminaries and the Bergman barycentric extension}
\label{section-preliminaries}

\subsection{Bergman geometry and Busemann barycenters}

Let \(\B^n\subset\C^n\) be the unit ball.  We use the Bergman metric associated
with the K\"ahler potential
\[
        -\log(1-|z|^2).
\]
Up to a harmless positive constant, its Hermitian tensor is
\begin{equation}\label{bergman-tensor}
        g_{i\overline j}(z)=
        \frac{(1-|z|^2)\delta_{ij}+\overline z_i z_j}{(1-|z|^2)^2}.
\end{equation}
This metric is complete, invariant under \(\Aut(\B^n)\), and negatively
curved.  Hence \((\B^n,g_B)\) is complex hyperbolic \(n\)-space
\cite{Goldman}.

For \(\xi\in S^{2n-1}\), we use the normalized Busemann function
\begin{equation}\label{busemann-function}
        \beta_\xi(z)=
        \log\frac{|1-\langle z,\xi\rangle|^2}{1-|z|^2},
        \qquad z\in\B^n.
\end{equation}
Thus \(\beta_\xi(0)=0\).  Multiplying all Busemann functions by the same
positive constant would not change barycenters.

\begin{lemma}[Busemann covariance]\label{busemann-cocycle}
For every \(A\in\Aut(\B^n)\) there exists a function \(c(A,\xi)\), independent
of \(z\), such that
\begin{equation}\label{cocycle-formula}
        \beta_{A\xi}(Az)=\beta_\xi(z)+c(A,\xi).
\end{equation}
Consequently the gradient fields of Busemann functions transform equivariantly
under holomorphic automorphisms.
\end{lemma}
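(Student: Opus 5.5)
We prove the covariance formula \eqref{cocycle-formula} by a direct computation with the standard transformation law for automorphisms of the ball. No abstract argument about horofunctions is needed.

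First, $\Aut(\B^n)$ is generated by the unitary group $U(n)$ and the involutive automorphisms $\varphi_a$. Moreover, if \eqref{cocycle-formula} holds for $A_1$ and $A_2$, then it holds for $A_1A_2$ with the cocycle
\[
c(A_1A_2,\xi)=c(A_1,A_2\xi)+c(A_2,\xi).
\]
Indeed,
\[
\beta_{A_1A_2\xi}(A_1A_2z)=\beta_{A_2\xi}(A_2z)+c(A_1,A_2\xi)=\beta_\xi(z)+c(A_2,\xi)+c(A_1,A_2\xi).
\]
Alternatively, we can avoid generators altogether and use, for an arbitrary $A\in\Aut(\B^n)$, the two classical identities valid for $z,w\in\overline{\B^n}$ (with $z,w$ not both on the sphere in the second one):
\[
1-|Az|^2=\frac{(1-|a|^2)(1-|z|^2)}{|1-\langle z,a\rangle|^2},\qquad
1-\langle Az,Aw\rangle=\frac{(1-|a|^2)(1-\langle z,w\rangle)}{(1-\langle z,a\rangle)(1-\langle a,w\rangle)},
\]
where $a=A^{-1}(0)$. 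These follow from Rudin's Theorem 2.2.2 for $\varphi_a$ composed with a unitary map, since unitary maps preserve both quantities.

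Next, substitute $w=\xi\in S^{2n-1}$ into the second identity. By continuity it still holds when $\xi$ lies on the sphere, because the denominator $1-\langle a,\xi\rangle$ does not vanish when $|a|<1$. Taking squared moduli and dividing by the first identity gives
\[
\frac{|1-\langle Az,A\xi\rangle|^2}{1-|Az|^2}
=\frac{(1-|a|^2)\,|1-\langle z,\xi\rangle|^2}{(1-|z|^2)\,|1-\langle a,\xi\rangle|^2}.
\]
The factors $|1-\langle z,a\rangle|^2$ cancel exactly. Taking logarithms yields \eqref{cocycle-formula} with
\[
c(A,\xi)=\log\frac{1-|a|^2}{|1-\langle a,\xi\rangle|^2}=-\beta_\xi(a),
\]
which is independent of $z$. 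We also note that $\beta_{A\xi}$ is evaluated at $Az$ with $\langle Az,A\xi\rangle$; since the Busemann function is defined through $\langle z,\xi\rangle$, the only care needed is the order of the arguments in the inner product, which affects only conjugation and hence not the modulus.

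Finally, the gradient statement follows by differentiating. Since $A$ is a Bergman isometry and $\beta_{A\xi}\circ A=\beta_\xi+\text{const}$, we get
\[
dA_z\bigl(\nabla\beta_\xi(z)\bigr)=\nabla\beta_{A\xi}(Az),
\]
because the gradient of a function pulled back by an isometry is the pushforward of the gradient. The only mildly delicate step is verifying the second displayed identity for a general automorphism, including boundary values. I would handle this by writing $A=U\varphi_a$ and invoking the standard $\varphi_a$ identity, extended continuously to $w\in S^{2n-1}$.
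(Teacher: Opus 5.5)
Your proof is correct, but it takes a different route from the paper. The paper argues abstractly: since $A$ is a Bergman isometry, $z\mapsto\beta_\xi(A^{-1}z)$ is a Busemann function with endpoint $A\xi$, and two Busemann functions with the same endpoint and normalization differ by an additive constant; the gradient statement then follows because $A$ preserves the Levi--Civita connection. You instead verify \eqref{cocycle-formula} directly from Rudin's identities for $1-\langle Az,Aw\rangle$ and $1-|Az|^2$ with $A=U\varphi_a$, $a=A^{-1}(0)$, extended by continuity to $w=\xi\in S^{2n-1}$. The algebra is right and yields the explicit cocycle
\[
c(A,\xi)=-\beta_\xi\bigl(A^{-1}(0)\bigr)=\tfrac1n\log P\bigl(A^{-1}(0),\xi\bigr).
\]

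Your route does not need two facts that the paper takes as standard. First, that the explicit formula for $\beta_\xi$ is (twice) a genuine Bergman Busemann function. Second, that Busemann functions with a common endpoint differ by constants. It also makes visible the link between the Busemann cocycle and the Poisson--Szeg\H{o} kernel. The paper's route is coordinate-free and applies verbatim to any Hadamard manifold where asymptotic Busemann functions differ by constants, and it explains conceptually why the constant cannot depend on $z$. For the gradient part, both arguments rest on the same input, that $A$ is a Bergman isometry; your identity $dA_z(\nabla\beta_\xi(z))=\nabla\beta_{A\xi}(Az)$ is exactly the paper's ``gradients transform by pull-back.''

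Two small points. The generator/cocycle-composition paragraph is superfluous once you use the general form $A=U\varphi_a$. Your remark about the order of arguments in the inner product is moot, since both sides use $\langle\cdot,\xi\rangle$ in the same slot.
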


\begin{proof}
Elements of \(\Aut(\B^n)\) are Bergman isometries.  Hence the function
\(z\mapsto \beta_\xi(A^{-1}z)\) is a Busemann function with ideal endpoint
\(A\xi\).  The function \(z\mapsto\beta_{A\xi}(z)\) has the same endpoint.
Busemann functions with the same endpoint and with the same metric
normalization differ by an additive constant.  Thus
\[
        \beta_{A\xi}(Az)=\beta_\xi(z)+c(A,\xi),
\]
where the constant is independent of \(z\).  Since \(A\) preserves the
Levi--Civita connection of the Bergman metric, gradients and Hessians transform
by pull-back under \(A\); in particular, taking gradients removes the constant.
\end{proof}

Let \(\mu\) be a probability measure on \(S^{2n-1}\).  Define its averaged
Busemann potential by
\begin{equation}\label{barycenter-potential}
        \mathcal B_\mu(z)=\int_{S^{2n-1}}\beta_\xi(z)\,d\mu(\xi).
\end{equation}

\begin{definition}
A probability measure \(\mu\) on \(S^{2n-1}\) is called admissible if it has no
atom of mass at least \(1/2\).
\end{definition}

The threshold \(1/2\) is natural: an atom of mass at least \(1/2\) can destroy
properness of the averaged Busemann potential along the geodesic ray ending at
that atom.

\begin{proposition}[Existence and uniqueness]\label{existence-uniqueness-barycenter}
Let \(\mu\) be an admissible probability measure on \(S^{2n-1}\) with full
support.  Then \(\mathcal B_\mu\) is proper and has a unique minimizer in
\(\B^n\).
\end{proposition}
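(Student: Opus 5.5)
The plan has three steps: strict convexity gives uniqueness, a lower bound on the Busemann potential gives properness, and properness plus continuity gives existence.

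\textbf{Uniqueness via strict convexity.} Each Busemann function \(\beta_\xi\) is convex along Bergman geodesics, since \((\B^n,g_B)\) is a Hadamard manifold. It is in fact strictly convex along every geodesic not asymptotic to \(\xi\) at either end. The cleanest way to see this uses Lemma~\ref{busemann-cocycle}: move a given geodesic to a diameter through \(0\) by an automorphism, at the cost of an additive constant in \(\beta\). Then compute \(\beta_\xi(t v)=\log|1-t\langle v,\xi\rangle|^2-\log(1-t^2)\) explicitly for \(t\in(-1,1)\). Its second derivative is nonnegative and vanishes identically only when \(\xi=\pm v\). Because \(\mu\) has full support and the set \(\{\pm v\}\) has at most two points, \(\mu\) charges points off the geodesic's endpoints. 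Hence \(\mathcal B_\mu\) is strictly convex along every geodesic, and it has at most one minimizer. Differentiation under the integral is justified because \(\beta_\xi\) and its derivatives are bounded uniformly in \(\xi\) on compact subsets of \(\B^n\).

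\textbf{Properness.} This is the main obstacle. I will show \(\mathcal B_\mu(z)\to\infty\) as \(|z|\to1\). Write \(z=r\zeta\) with \(\zeta\in S^{2n-1}\). Using \(|1-\langle z,\xi\rangle|\ge 1-r\) and \(1-r^2\le 2(1-r)\), we have
\[
\beta_\xi(z)\ge \log\frac{|1-\langle z,\xi\rangle|^2}{2(1-r)} .
\]
For \(\xi\) with \(|\xi-\zeta|\ge\delta\), the quantity \(|1-\langle z,\xi\rangle|\) is bounded below by some \(c(\delta)>0\) once \(r\) is close to \(1\). On that set \(\beta_\xi(z)\ge -\log(1-r)+O_\delta(1)\). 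On the cap \(U_\delta(\zeta)=\{|\xi-\zeta|<\delta\}\) we only have \(\beta_\xi(z)\ge \log\frac{(1-r)^2}{2(1-r)}=\log(1-r)-\log 2\). Combining the two regions gives
\[
\mathcal B_\mu(z)\ge \bigl(1-2\mu(U_\delta(\zeta))\bigr)\log\frac1{1-r}-C_\delta .
\]

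\textbf{The atom condition.} Admissibility means no atom has mass \(\ge 1/2\). A standard compactness argument on the sphere then gives \(\delta>0\) and \(\epsilon>0\) such that \(\mu(U_\delta(\zeta))\le 1/2-\epsilon\) for all \(\zeta\). Indeed, otherwise there are \(\zeta_k\to\zeta_*\) and \(\delta_k\to0\) with \(\mu(U_{\delta_k}(\zeta_k))\to\ge1/2\), and outer regularity then forces \(\mu(\{\zeta_*\})\ge 1/2\), a contradiction. With this \(\delta\) the coefficient \(1-2\mu(U_\delta(\zeta))\) is at least \(2\epsilon\), uniformly in \(\zeta\), so the lower bound is uniform in the direction and \(\mathcal B_\mu(z)\to\infty\) as \(r\to1\). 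The care here is in using the crude bound only on the small cap and the strong bound on its complement, which is exactly where the \(1/2\) threshold enters.

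\textbf{Existence.} \(\mathcal B_\mu\) is finite and continuous on \(\B^n\) by dominated convergence, since \(\beta_\xi\) is locally bounded uniformly in \(\xi\). Properness makes its sublevel sets compact, so a minimizer exists. By strict convexity it is unique.
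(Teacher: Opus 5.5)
Your argument follows the paper's route. Uniqueness comes from geodesic convexity of Busemann functions together with full support. Properness comes from splitting the sphere into a small neighbourhood of the escape direction, where \(\beta_\xi\ge\log(1-r)-\log 2\), and its complement, where \(\beta_\xi\ge-\log(1-r)-C\), so that the threshold \(1/2\) enters through the coefficient \(1-2\mu(U)\). Your properness argument is correct and more explicit than the paper's sketch; it is the same estimate the paper writes out later in the joint-continuity lemma. Your compactness step giving \(\sup_\zeta\mu(U_\delta(\zeta))\le 1/2-\epsilon\) is a valid replacement for the paper's choice of a neighbourhood of the limit point of a given sequence.

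One step in the uniqueness part fails as written. The Euclidean parameter \(t\) on the diameter \(t\mapsto tv\) is not Bergman arc length, and \(t\mapsto\beta_\xi(tv)\) is not convex in \(t\). For \(\xi=v\) it equals \(\log\frac{1-t}{1+t}\), whose second \(t\)-derivative is \(-4t/(1-t^2)^2<0\) for \(t>0\). So in that parameter, neither ``the second derivative is nonnegative'' nor ``it vanishes identically exactly when \(\xi=\pm v\)'' holds.

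Both statements become true once you differentiate in arc length \(s\), which for the paper's normalization is, up to a constant factor, \(s=\operatorname{artanh}t\); then \(\beta_v(tv)\) is affine in \(s\). Alternatively, compute only at \(t=0\), where \(t=s+O(s^3)\), so the two second derivatives agree. With \(a=\langle v,\xi\rangle\) this gives \(2\bigl(1-(\Real a)^2+(\operatorname{Im}a)^2\bigr)\). Then move any other point of the diameter to \(0\) by the hyperbolic translation along it, which fixes \(\pm v\).

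This yields strict positivity of the second derivative at every point of the geodesic whenever \(\xi\neq\pm v\), which is what you actually need. ``Not identically zero'' would not by itself give strict convexity on the particular segment joining two putative minimizers without an extra real-analyticity argument. The repaired computation is exactly the paper's Hessian lemma, so after this fix your proof coincides with the paper's.
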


\begin{proof}
Busemann functions on complex hyperbolic space are convex along geodesics, and
so is their average; compare the general CAT\((0)\) discussion in
\cite{BH}.  Properness follows from the standard asymptotics of
\eqref{busemann-function}.  If \(z_j\to\eta\in S^{2n-1}\), then the Busemann
function with endpoint \(\eta\) tends to \(-\infty\) along the ray ending at
\(\eta\), whereas Busemann functions with endpoints separated from \(\eta\) tend
to \(+\infty\).  Since \(\mu\) has no atom of mass at least \(1/2\), one may
choose a neighborhood \(U\) of \(\eta\) with \(\mu(U)<1/2\).  The positive
contribution from the complement of \(U\) dominates the negative contribution
from \(U\), and hence \(\mathcal B_\mu(z_j)\to+\infty\).

Properness gives existence of a minimizer.  If two distinct minimizers existed,
convexity would force \(\mathcal B_\mu\) to be constant on the geodesic segment
joining them.  Equality in the Busemann convexity inequality would then force
\(\mu\)-almost every endpoint to lie in the two-point ideal boundary of that
geodesic.  This contradicts full support.  Hence the minimizer is unique.
\end{proof}

\begin{definition}
For an admissible full-support probability measure \(\mu\), its Bergman
barycenter is
\begin{equation}\label{definition-bar}
        \barB(\mu)=\operatorname*{argmin}_{z\in\B^n}\mathcal B_\mu(z).
\end{equation}
Equivalently,
\begin{equation}\label{gradient-equation}
        \int_{S^{2n-1}}\nabla_z^B\beta_\xi(z)\,d\mu(\xi)=0,
        \qquad z=\barB(\mu).
\end{equation}
\end{definition}

The Euler equation can be written in coordinates.  From
\eqref{busemann-function},
\[
        \frac{\partial\beta_\xi}{\partial\overline z_j}(z)
        =
        \frac{z_j}{1-|z|^2}-\frac{\xi_j}{1-\langle \xi,z\rangle},
\]
where \(\langle \xi,z\rangle=\sum_j\xi_j\overline z_j\).  Thus the barycenter
equation is
\begin{equation}\label{coordinate-barycenter-equation}
        \int_{S^{2n-1}}
        \frac{\xi}{1-\langle \xi,z\rangle}\,d\mu(\xi)
        =
        \frac{z}{1-|z|^2}.
\end{equation}
\begin{proposition}[Equivariance of barycenters]\label{equivariance-barycenter}
For every \(A\in\Aut(\B^n)\) and every admissible full-support probability
measure \(\mu\),
\begin{equation}\label{bar-equivariance}
        \barB(A_*\mu)=A(\barB(\mu)).
\end{equation}
\end{proposition}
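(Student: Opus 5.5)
The plan is to compare the averaged Busemann potentials of \(A_*\mu\) and \(\mu\) using Lemma~\ref{busemann-cocycle}, and then to invoke the uniqueness part of Proposition~\ref{existence-uniqueness-barycenter}.

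The first step is to check that \(A_*\mu\) is again admissible with full support, so that \(\barB(A_*\mu)\) is defined. Since \(A\) extends to a homeomorphism of \(\overline{\B^n}\), its restriction to \(S^{2n-1}\) is a homeomorphism of the sphere. The image of a full-support measure under a homeomorphism still has full support. Atoms are carried bijectively to atoms of the same mass, because \((A_*\mu)(\{A\xi\})=\mu(\{\xi\})\). Hence \(A_*\mu\) has no atom of mass at least \(1/2\).

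Next, I would compute the potential of \(A_*\mu\) at a point \(Az\). Using the change of variables formula and then \eqref{cocycle-formula},
\[
\mathcal B_{A_*\mu}(Az)=\int_{S^{2n-1}}\beta_{A\xi}(Az)\,d\mu(\xi)
=\mathcal B_\mu(z)+\int_{S^{2n-1}}c(A,\xi)\,d\mu(\xi).
\]
The last integral is a constant independent of \(z\). To see that it is finite, I would use an explicit form of the cocycle. Evaluating the cocycle identity at \(z=0\) gives \(c(A,\xi)=\beta_{A\xi}(A0)\). By \eqref{busemann-function} this equals \(\log\bigl(|1-\langle A0,A\xi\rangle|^2/(1-|A0|^2)\bigr)\). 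Since \(|A0|<1\), the argument of the logarithm is bounded above and bounded away from zero for \(\xi\in S^{2n-1}\), and \(c(A,\cdot)\) is continuous in \(\xi\). It is therefore bounded on the compact sphere and \(\mu\)-integrable. This integrability check is the only point needing care, and it is the step I expect to be the main (though mild) obstacle.

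Therefore \(\mathcal B_{A_*\mu}\circ A=\mathcal B_\mu+\mathrm{const}\). Because \(A\) is a bijection of \(\B^n\), \(z\) minimizes \(\mathcal B_\mu\) exactly when \(Az\) minimizes \(\mathcal B_{A_*\mu}\). The uniqueness in Proposition~\ref{existence-uniqueness-barycenter} then gives \(\barB(A_*\mu)=A(\barB(\mu))\), which is \eqref{bar-equivariance}.

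Alternatively, one can argue through the gradient equation \eqref{gradient-equation}. Lemma~\ref{busemann-cocycle} gives \(\nabla^B\beta_{A\xi}(Az)=dA_z\,\nabla^B\beta_\xi(z)\), so integrating against \(\mu\) shows that \(A\) maps critical points of \(\mathcal B_\mu\) to critical points of \(\mathcal B_{A_*\mu}\). Uniqueness then yields the same conclusion.
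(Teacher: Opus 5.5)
Your proposal is correct and follows essentially the same route as the paper: apply the cocycle identity of Lemma~\ref{busemann-cocycle} to get $\mathcal B_{A_*\mu}\circ A=\mathcal B_\mu+\mathrm{const}$, so minimizers correspond under $A$, and then invoke uniqueness. Your extra checks, that $A_*\mu$ is again admissible with full support and that $c(A,\xi)=\beta_{A\xi}(A0)$ is bounded and hence $\mu$-integrable, are valid details the paper leaves implicit.
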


\begin{proof}
By Lemma \ref{busemann-cocycle},
\[
        \mathcal B_{A_*\mu}(Az)
        =\int\beta_{A\xi}(Az)\,d\mu(\xi)
        =\int\beta_\xi(z)\,d\mu(\xi)+\int c(A,\xi)\,d\mu(\xi).
\]
The second term is independent of \(z\).  Therefore minimizers are carried to
minimizers by \(A\).
\end{proof}

\begin{lemma}[Hessian of a Bergman Busemann function]
Let
\[
        \beta_\eta(w)
        =
        \log
        \frac{|1-\langle w,\eta\rangle|^2}{1-|w|^2},
        \qquad w\in\B^n,\quad \eta\in S^{2n-1}.
\]
Then \(\beta_\eta\) has nonnegative Hessian with respect to the Bergman metric:
\[
        \operatorname{Hess}^{B}_w\beta_\eta(v,v)\ge 0
\]
for every \(w\in\B^n\) and every \(v\in T_w\B^n\). Moreover,
\[
        \operatorname{Hess}^{B}_w\beta_\eta(v,v)=0
\]
if and only if \(v\) is tangent to the Bergman geodesic through \(w\) ending at
\(\eta\).
\end{lemma}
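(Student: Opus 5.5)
Both Bergman Hessians and the Busemann functions transform naturally under \(\Aut(\B^n)\) (Lemma~\ref{busemann-cocycle}), so we may reduce to a single base point. Choose \(A\in\Aut(\B^n)\) with \(A(w)=0\). Then \(\beta_\eta\circ A^{-1}=\beta_{A\eta}+\text{const}\), and \(dA_w\) is a Bergman isometry carrying Hessians to Hessians and geodesics through \(w\) ending at \(\eta\) to geodesics through \(0\) ending at \(A\eta\). It therefore suffices to prove the claim at \(w=0\) for an arbitrary \(\eta\in S^{2n-1}\). Since \(\beta_{U\eta}(Uw)=\beta_\eta(w)\) for unitary \(U\), we may further assume \(\eta=e_1\).

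At \(w=0\) the metric tensor \eqref{bergman-tensor} equals \(\delta_{ij}\), and its first derivatives vanish there, because \(\partial g_{i\bar j}\) carries a factor of \(z\) or \(\bar z\). Hence the Christoffel symbols vanish at \(0\), and the Bergman Hessian at \(0\) coincides with the ordinary Euclidean real Hessian. Expanding to second order,
\[
\beta_{e_1}(v)=2\log|1-v_1|-\log(1-|v|^2)=-2\Real v_1-\Real(v_1^2)+|v|^2+O(|v|^3),
\]
using \(\log|1-v_1|^2=-2\Real v_1-\Real v_1^2+\dots\) and \(-\log(1-|v|^2)=|v|^2+\dots\). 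Writing \(v_1=a+ib\), the quadratic part is \(|v|^2-(a^2-b^2)=2b^2+\sum_{j\ge2}|v_j|^2\), up to the factor \(2\) coming from the Hessian convention. This form is positive semidefinite, and it vanishes exactly when \(b=0\) and \(v_j=0\) for \(j\ge2\), that is, when \(v\in\R e_1\).

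It remains to identify \(\R e_1\) with the tangent line to the geodesic from \(0\) to \(e_1\). The real segment \(t\mapsto te_1\), \(t\in(-1,1)\), is the fixed set of the isometric involution \(z\mapsto(\bar z_1,-z_2,\dots,-z_n)\). One checks that this map preserves the Bergman metric, since it preserves \(|z|^2\) and the form of the tensor. The fixed set of an isometry is totally geodesic, so the segment is a geodesic through \(0\) ending at \(e_1\), and it is the unique such geodesic. Its tangent at \(0\) is \(\R e_1\), which gives the equality case.

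The main obstacle is bookkeeping rather than substance. One must verify the vanishing of the first derivatives of the metric at \(0\), so that the Christoffel terms do not enter the Hessian. One must also check the transport of the equality case through the automorphism: \(dA_w\) maps the geodesic ray tangent to the geodesic ray tangent, and this holds because \(A\) is an isometry extending homeomorphically to the boundary.
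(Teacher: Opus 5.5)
Your overall route is the paper's: reduce by an automorphism and a unitary to $w=0$, $\eta=e_1$, expand $\beta_{e_1}$ to second order, and read off the form $2b^2+\sum_{j\ge2}|v_j|^2$, whose null directions are $\R e_1$. Your remark that the first derivatives of $g_{i\bar j}$ vanish at $0$ is correct. It makes explicit something the paper leaves implicit when it differentiates along $t\mapsto tv$: the Bergman Hessian at $0$ is the Euclidean one.

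The step that fails as written is the identification of $\R e_1$ with the geodesic direction toward $e_1$. For $n\ge2$ the map $\phi(z)=(\bar z_1,-z_2,\dots,-z_n)$ is anti-holomorphic in $z_1$ but holomorphic in $z_2,\dots,z_n$, and it is \emph{not} a Bergman isometry. By \eqref{bergman-tensor}, the squared length of $V$ at $z$ is $|V|^2/(1-|z|^2)+|\sum_i\bar z_iV_i|^2/(1-|z|^2)^2$, and $\phi$ does not preserve the second term. Take $z=(s,s,0,\dots,0)$ with $0<s<1/\sqrt2$ and $V=(i,i,0,\dots,0)$. Then $\sum_i\bar z_iV_i=2is$. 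At $\phi(z)=(s,-s,0,\dots,0)$ with $d\phi_zV=(-i,-i,0,\dots,0)$, the corresponding sum is $0$. Equivalently, $d\phi_0$ carries the complex line spanned by $e_1+e_2$ onto a totally real plane, which no isometry can do because these planes have different sectional curvatures. So the claim ``it preserves $|z|^2$ and the form of the tensor'' is false, and your fixed-set argument does not apply to this map.

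The repair stays within your strategy. Use two genuine isometries separately: complex conjugation $C(z)=\bar z$ and the unitary $R=\mathrm{diag}(1,-1,\dots,-1)$. Both fix $0$, and their differentials fix $e_1$. By uniqueness of geodesics with prescribed initial data, the geodesic $\gamma$ with $\gamma(0)=0$ and $\gamma'(0)=e_1$ satisfies $C\circ\gamma=\gamma=R\circ\gamma$. Hence $\gamma$ lies in $\operatorname{Fix}(C)\cap\operatorname{Fix}(R)=\{te_1:-1<t<1\}$. Alternatively, $\operatorname{Fix}(R)=\C e_1\cap\B^n$ is totally geodesic with induced metric $|dz_1|^2/(1-|z_1|^2)^2$, and the real diameter is a geodesic of that metric. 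With this correction your argument is complete and agrees with the paper's proof, which simply asserts this identification.
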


\begin{proof}
This is the standard convexity property of Busemann functions on complex
hyperbolic space. Equivalently, one may verify it directly by using the
homogeneity of the Bergman metric. Indeed, by applying a Bergman isometry, it is
enough to consider the case \(w=0\) and \(\eta=e_1\). In that case
\[
        \beta_{e_1}(z)
        =
        \log |1-z_1|^2-\log(1-|z|^2).
\]
Let \(v=(v_1,\ldots,v_n)\in T_0\B^n\simeq\C^n\), where
\(v_1=a+ib\). Then, for real \(t\) sufficiently small,
\[
\begin{aligned}
        \beta_{e_1}(tv)
        &=
        \log |1-tv_1|^2-\log(1-t^2|v|^2)  \\
        &=
        -2ta
        +
        t^2\left(2b^2+\sum_{j=2}^n |v_j|^2\right)
        +
        O(t^3).
\end{aligned}
\]
Hence
\[
        \frac{d^2}{dt^2}\bigg|_{t=0}\beta_{e_1}(tv)
        =
        2\left(2b^2+\sum_{j=2}^n |v_j|^2\right)\ge 0.
\]
The right-hand side vanishes exactly when
\[
        b=0,\qquad v_2=\cdots=v_n=0,
\]
that is, when \(v\) is a real multiple of \(e_1\). This is precisely the
geodesic direction at \(0\) pointing toward \(e_1\). It remains to explain the reduction to this normal form.  Let
\(w\in\B^n\), \(\eta\in S^{2n-1}\), and choose a Bergman isometry
\(A\in\Aut(\B^n)\) with \(A(w)=0\) and \(A\eta=e_1\).  By the Busemann
cocycle formula,
\[
        \beta_\eta = \beta_{e_1}\circ A + C
\]
near \(w\), for a constant \(C\) independent of the point.  Since \(A\) is an
isometry, it preserves the Levi--Civita connection, and therefore
\[
        \Hess^B_w\beta_\eta(v,v)
        =
        \Hess^B_0\beta_{e_1}(dA_wv,dA_wv).
\]
The additive constant has zero Hessian.  The preceding computation at
\((0,e_1)\) therefore gives the desired inequality and the equality case at
\((w,\eta)\).
\end{proof}

\subsection{Visual measures and the barycentric extension}

Let \(\sigma\) be normalized surface measure on \(S^{2n-1}\).  For
\(z\in\B^n\), define
\begin{equation}\label{visual-measure}
        d\sigma_z(\xi)=P(z,\xi)\,d\sigma(\xi),
        \qquad
        P(z,\xi)=
        \left(\frac{1-|z|^2}{|1-\langle z,\xi\rangle|^2}\right)^n.
\end{equation}
Then \(\sigma_z\) is a probability measure and \(\sigma_0=\sigma\).

\begin{proposition}[Covariance of visual measures]\label{visual-covariance}
For every \(A\in\Aut(\B^n)\) and every \(z\in\B^n\),
\begin{equation}\label{visual-covariance-equation}
        A_*\sigma_z=\sigma_{A(z)}.
\end{equation}
\end{proposition}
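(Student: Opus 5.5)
We prove \(A_*\sigma_z=\sigma_{A(z)}\) by a change of variables on the sphere, using the standard Jacobian of a ball automorphism acting on \(S^{2n-1}\) together with the transformation law of \(1-\langle z,\xi\rangle\). Equivalently, for every continuous \(\varphi\) on \(S^{2n-1}\) we show
\[
        \int_{S^{2n-1}}\varphi(A\xi)\,P(z,\xi)\,d\sigma(\xi)
        =\int_{S^{2n-1}}\varphi(\eta)\,P(Az,\eta)\,d\sigma(\eta).
\]

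\textbf{Step 1: the transformation identity.} Write \(A=U\varphi_a\) with \(U\) unitary and \(\varphi_a\) the involutive automorphism exchanging \(0\) and \(a\). The unitary part is trivial: \(P(Uz,U\xi)=P(z,\xi)\) and \(\sigma\) is \(U\)-invariant. So it suffices to treat \(A=\varphi_a\). We use the classical identity (Rudin, \emph{Function theory in the unit ball}, Thm.~2.2.5)
\[
        1-\langle \varphi_a(z),\varphi_a(w)\rangle
        =\frac{(1-|a|^2)(1-\langle z,w\rangle)}{(1-\langle z,a\rangle)(1-\langle a,w\rangle)},
\]
valid on \(\overline{\B^n}\times\overline{\B^n}\) minus the singular set. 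Setting \(w=z\) gives \(1-|\varphi_a(z)|^2\), and setting \(w=\xi\in S^{2n-1}\) gives \(1-\langle\varphi_a(z),\varphi_a(\xi)\rangle\). Dividing,
\[
        P(\varphi_a z,\varphi_a\xi)=P(z,\xi)\left(\frac{|1-\langle a,\xi\rangle|^2}{1-|a|^2}\right)^{n}
        =P(z,\xi)\,P(a,\xi)^{-1}.
\]

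\textbf{Step 2: the Jacobian.} The boundary Jacobian of \(\varphi_a\) with respect to \(\sigma\) is \(P(a,\cdot)\); that is, \((\varphi_a)_*\sigma=\sigma_a\). This is the \(z=0\) case of the statement, and it is the only nontrivial analytic input, so it is the main obstacle. I would establish it via Möbius invariance of the invariant Poisson integral: for \(f\in C(S^{2n-1})\), \(P[f]\) is \(\mathcal M\)-harmonic and \(P[f\circ\varphi_a]=P[f]\circ\varphi_a\). Evaluating at \(0\) gives \(\int f\circ\varphi_a\,d\sigma=P[f](a)=\int f\,P(a,\cdot)\,d\sigma\). (Alternatively one can compute the real Jacobian of \(\varphi_a|_{S^{2n-1}}\) directly, but citing Rudin, Thm.~3.3.8 / Prop.~1.4.x, is cleaner.)

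\textbf{Step 3: conclusion.} Using Steps 1 and 2 and the fact that \(\varphi_a\) is an involution,
\[
        \int\varphi(\varphi_a\xi)\,P(z,\xi)\,d\sigma(\xi)
        =\int\varphi(\varphi_a\xi)\,P(\varphi_a z,\varphi_a\xi)\,P(a,\xi)\,d\sigma(\xi)
        =\int\varphi(\eta)\,P(\varphi_a z,\eta)\,d\sigma(\eta),
\]
where the last equality applies Step 2 to the function \(\xi\mapsto\varphi(\varphi_a\xi)P(\varphi_a z,\varphi_a\xi)\).

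A conceptual alternative is available: \(\sigma_z\) is the visual (harmonic) measure from \(z\), which is canonically determined by the isometry-invariant geometry, so isometries carry it over. The computation above makes this rigorous in coordinates.
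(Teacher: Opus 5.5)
Your argument is correct and follows the paper's route. The paper simply cites Rudin for the transformation law of the Poisson--Szeg\H{o} kernel; you unpack it via the identity for $1-\langle\varphi_a(z),\varphi_a(w)\rangle$, the decomposition $A=U\varphi_a$, and the $z=0$ case $(\varphi_a)_*\sigma=\sigma_a$. One remark: the invariance $P[f\circ\varphi_a]=P[f]\circ\varphi_a$ you invoke in Step~2 is already the full statement for $\varphi_a$ (evaluate it at an arbitrary $z$ rather than at $0$), so for Steps~1 and~3 to carry real content you should obtain the $z=0$ case independently, e.g.\ from the direct boundary Jacobian computation you mention.
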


\begin{proof}
For holomorphic automorphisms this is the usual transformation law of the
Poisson--Szeg\H{o} kernel \cite{Rudin}. Intrinsically, \(\sigma_z\) is the
visual measure based at \(z\).
\end{proof}

\begin{proposition}[Barycenter of the visual measure]\label{visual-barycenter}
For every \(z\in\B^n\),
\begin{equation}\label{bar-visual}
        \barB(\sigma_z)=z.
\end{equation}
\end{proposition}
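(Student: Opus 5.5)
We first reduce to the case \(z=0\) using automorphism equivariance, and then verify the barycenter equation at the origin by a symmetry argument.

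\emph{Admissibility.} For every \(z\), the measure \(\sigma_z\) has a continuous, strictly positive density with respect to \(\sigma\). It therefore has full support and no atoms, so it is admissible. By Proposition~\ref{existence-uniqueness-barycenter}, \(\barB(\sigma_z)\) is well defined and is the unique solution of the gradient equation \eqref{gradient-equation}.

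\emph{Reduction to the origin.} Choose \(A\in\Aut(\B^n)\) with \(A(0)=z\); for instance, take the standard involution exchanging \(0\) and \(z\). Proposition~\ref{visual-covariance} gives \(\sigma_z=\sigma_{A(0)}=A_*\sigma_0=A_*\sigma\). Proposition~\ref{equivariance-barycenter} then gives
\[
\barB(\sigma_z)=\barB(A_*\sigma)=A(\barB(\sigma)).
\]
It therefore suffices to show that \(\barB(\sigma)=0\).

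\emph{The case \(z=0\).} By uniqueness, it is enough to check that \(0\) satisfies the coordinate equation \eqref{coordinate-barycenter-equation}. At \(z=0\) the right-hand side vanishes, and the left-hand side becomes \(\int_{S^{2n-1}}\xi\,d\sigma(\xi)\). This integral is zero because \(\sigma\) is invariant under \(\xi\mapsto-\xi\).

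Alternatively, one can argue by symmetry without the coordinate equation. The potential \(\mathcal B_\sigma\) is invariant under the unitary group, since \(\beta_{U\xi}(Uz)=\beta_\xi(z)\) and \(\sigma\) is \(U(n)\)-invariant. Hence its unique minimizer is a point fixed by every unitary map, and the only such point is \(0\).

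\emph{Main obstacle.} The only delicate point is the validity of the cited covariance \(A_*\sigma_z=\sigma_{A(z)}\), that is, the Poisson--Szeg\H{o} transformation law. This is taken as given from Proposition~\ref{visual-covariance}. With it, the argument is purely formal.
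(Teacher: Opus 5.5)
Your proposal is correct and follows the paper's proof: reduce to $z=0$ via Proposition~\ref{visual-covariance} and Proposition~\ref{equivariance-barycenter}, then identify $\barB(\sigma)=0$ by $U(n)$-invariance, which is exactly your "alternative" argument. Your primary route at the origin, checking $\int_{S^{2n-1}}\xi\,d\sigma(\xi)=0$ in \eqref{coordinate-barycenter-equation}, is equally valid, since by convexity a critical point of $\mathcal B_\sigma$ is its unique minimizer.
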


\begin{proof}
For \(z=0\), \(\sigma_0=\sigma\) is invariant under \(U(n)\).  Its barycenter
is therefore fixed by \(U(n)\), hence is \(0\).  If \(z\ne0\), choose
\(A\in\Aut(\B^n)\) with \(A(0)=z\).  Then \(\sigma_z=A_*\sigma_0\), and
barycenter equivariance gives
\[
        \barB(\sigma_z)=\barB(A_*\sigma_0)=A(\barB(\sigma_0))=A(0)=z.
\]
\end{proof}

\begin{definition}[Bergman barycentric extension]\label{extension-definition}
Let \(f:S^{2n-1}\to S^{2n-1}\) be a homeomorphism.  Its Bergman barycentric
extension is
\[
        E_B(f):\B^n\to\B^n,
        \qquad
        E_B(f)(z)=\barB(f_*\sigma_z).
\]
\end{definition}

Since \(\sigma_z\) is non-atomic and has full support, \(f_*\sigma_z\) is again
non-atomic and has full support.  Thus the definition is meaningful.

The point \(w=E_B(f)(z)\) is equivalently characterized by
\begin{equation}\label{extension-euler}
        \int_{S^{2n-1}}\nabla_w^B\beta_{f(\xi)}(w)\,d\sigma_z(\xi)=0.
\end{equation}
In coordinates,
\begin{equation}\label{extension-coordinate-euler}
        \int_{S^{2n-1}}
        \frac{f(\xi)}{1-\langle f(\xi),w\rangle}\,d\sigma_z(\xi)
        =
        \frac{w}{1-|w|^2}.
\end{equation}

\subsection{CR boundary geometry and distortion classes}
\label{section-small-distortion}

The boundary sphere carries its CR, equivalently Heisenberg, geometry.  We use
the visual metric
\begin{equation}\label{cr-visual-metric}
        d_{CR}(\xi,\eta)=|1-\langle \xi,\eta\rangle|^{1/2}.
\end{equation}
After one boundary point is removed and the Cayley transform is applied, this
metric is comparable to a Koranyi--Cygan metric on the Heisenberg group.

A distortion function is an increasing homeomorphism
\[
        \eta:[0,\infty)\to[0,\infty).
\]
A homeomorphism \(f:S^{2n-1}\to S^{2n-1}\) is called
\(\eta\)-CR-quasisymmetric if
\[
        \frac{d_{CR}(f(\xi),f(\eta_1))}
             {d_{CR}(f(\xi),f(\eta_2))}
        \le
        \eta\left(
        \frac{d_{CR}(\xi,\eta_1)}
             {d_{CR}(\xi,\eta_2)}
        \right)
\]
for all distinct \(\xi,\eta_1,\eta_2\in S^{2n-1}\).  We call \(f\)
CR-quasisymmetric if it is \(\eta\)-CR-quasisymmetric for some distortion
function \(\eta\).

For four distinct boundary points define the CR metric cross-ratio
\begin{equation}\label{cr-cross-ratio}
        [\xi,\eta,\alpha,\beta]_{CR}
        =
        \frac{d_{CR}(\xi,\alpha)d_{CR}(\eta,\beta)}
        {d_{CR}(\xi,\beta)d_{CR}(\eta,\alpha)}.
\end{equation}
This cross-ratio is invariant under \(\Aut(\B^n)\), because the conformal
factors of a boundary automorphism cancel in \eqref{cr-cross-ratio}.

A homeomorphism \(f:S^{2n-1}\to S^{2n-1}\) is called
\(\theta\)-CR-quasi-M\"obius if
\begin{equation}\label{general-cr-quasimobius}
        [f(\xi),f(\eta),f(\alpha),f(\beta)]_{CR}
        \le
        \theta\!\left([\xi,\eta,\alpha,\beta]_{CR}\right)
\end{equation}
for every quadruple of distinct points, where \(\theta\) is a distortion
function.  On the compact CR sphere, quasisymmetric and quasi-M\"obius control
are quantitatively equivalent \cite{Vaisala,Haissinsky}.  Also, the inverse of
a \(\theta\)-quasi-M\"obius homeomorphism is quasi-M\"obius with distortion
depending only on \(\theta\).

The advantage of quasi-M\"obius control is that it is preserved, with the same
distortion function, by pre- and post-composition with boundary values of ball
automorphisms.

We shall also use the following standard terminology from coarse geometry. A map
\[
F:(X,d_X)\to(Y,d_Y)
\]
is an \((L,C)\)-quasi-isometric embedding if, for all \(x,x'\in X\),
\[
L^{-1}d_X(x,x')-C
\le d_Y(F(x),F(x'))
\le Ld_X(x,x')+C.
\]
It is a quasi-isometry if, in addition, its image is coarsely dense in \(Y\); that is, there exists \(R<\infty\) such that every point of \(Y\) lies within distance \(R\) of \(F(X)\).

A map \(G:Y\to X\) is called a \emph{coarse inverse} of \(F\) if there exists \(D<\infty\) such that
\[
d_X(G(F(x)),x)\le D
\]
for all \(x\in X\), and
\[
d_Y(F(G(y)),y)\le D
\]
for all \(y\in Y\).
We now strengthen the coarse conclusion under small automorphism-invariant
boundary distortion.  The positive CR-M\"obius group is
\[
        \operatorname{Mob}_{CR}^+(S^{2n-1})
        =\{A|_{S^{2n-1}}:A\in\Aut(\B^n)\}.
\]
The metric cross-ratio is also preserved by the anti-CR component
\(C\circ\operatorname{Mob}_{CR}^+(S^{2n-1})\), where
\(C(z)=\overline z\).  In the small-distortion theorem we restrict to the
positive component and call such maps CR-orientation-preserving.  The only
property used below is that a uniform limit of positive maps with cross-ratio
distortion tending to one belongs to the positive, rather than the anti-CR,
component.

\begin{definition}[Almost CR-M\"obius maps]\label{defep}
Let \(\varepsilon\ge0\).  A CR-orientation-preserving homeomorphism
\(f:S^{2n-1}\to S^{2n-1}\) is called \(\varepsilon\)-almost CR-M\"obius if
\begin{equation}\label{almost-cr-mobius}
        e^{-\varepsilon}[\xi,\eta,\alpha,\beta]_{CR}
        \le
        [f(\xi),f(\eta),f(\alpha),f(\beta)]_{CR}
        \le
        e^{\varepsilon}[\xi,\eta,\alpha,\beta]_{CR}
\end{equation}
for every quadruple of distinct boundary points.
\end{definition}

\begin{remark}[Non-emptiness of the class]\label{remark-nonempty-almost-cr-mobius}
Every holomorphic CR-M\"obius transformation is \(0\)-almost CR-M\"obius.
More generally, if \(f\) is CR-orientation-preserving and \(L\)-bi-Lipschitz
with respect to \(d_{CR}\), then
\[
        L^{-4}[\xi,\eta,\alpha,\beta]_{CR}
        \le
        [f(\xi),f(\eta),f(\alpha),f(\beta)]_{CR}
        \le
        L^4[\xi,\eta,\alpha,\beta]_{CR},
\]
so \(f\) is \(\varepsilon\)-almost CR-M\"obius with
\(\varepsilon=4\log L\).  In particular, small nontrivial contact bi-Lipschitz
perturbations of the identity give non-M\"obius examples with arbitrarily small
\(\varepsilon>0\).
\end{remark}

The condition is deliberately automorphism-invariant: if
\(A,B\in\Aut(\B^n)\), then \(A\circ f\circ B\) has the same
\(\varepsilon\)-almost CR-M\"obius distortion.  This invariance is the reason
for using cross-ratio control rather than a fixed quasisymmetric distortion
function in the perturbative theorem.

\subsection{First variation and automorphic normalization}
\label{section-first-variation}

We now compute the normalized differential.  This is the finite-dimensional
formula behind the later obstruction.

\begin{proposition}[First-harmonic formula for the derivative]
\label{first-harmonic-formula}
Let \(f:S^{2n-1}\to S^{2n-1}\) be a homeomorphism and set
\(F=E_B(f)\). Suppose that
\[
        F(0)=0.
\]
Then
\begin{equation}\label{balanced-condition}
        \int_{S^{2n-1}} f(\xi)\,d\sigma(\xi)=0.
\end{equation}
Define real-linear operators \(\mathcal A_f,\mathcal B_f:\C^n\to\C^n\) by
\begin{equation}\label{Af-definition}
        \mathcal A_f u
        =
        u-
        \int_{S^{2n-1}}
        f(\xi)\langle f(\xi),u\rangle\,d\sigma(\xi),
\end{equation}
and
\begin{equation}\label{Bf-definition}
        \mathcal B_f v
        =
        2n\int_{S^{2n-1}}
        f(\xi)\Real\langle v,\xi\rangle\,d\sigma(\xi).
\end{equation}
Then \(F\) is differentiable at \(0\) and
\begin{equation}\label{derivative-formula}
        dF_0=\mathcal A_f^{-1}\mathcal B_f.
\end{equation}
In particular, \(dF_0\) is singular if and only if \(\mathcal B_f\) is
singular.
\end{proposition}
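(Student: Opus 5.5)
The plan is to apply the implicit function theorem to the coordinate Euler equation \eqref{extension-coordinate-euler}. Define, for \((z,w)\in\B^n\times\B^n\),
\[
        \Phi(z,w)=\int_{S^{2n-1}}\frac{f(\xi)}{1-\langle f(\xi),w\rangle}\,P(z,\xi)\,d\sigma(\xi)-\frac{w}{1-|w|^2}.
\]
By Definition \ref{extension-definition} and the uniqueness part of Proposition \ref{existence-uniqueness-barycenter}, \(F(z)\) is the unique zero of \(\Phi(z,\cdot)\). Since \(f\) is only continuous, smoothness must come from the kernels. The integrand is real-analytic in \((z,w)\), uniformly in \(\xi\), on compact subsets, because \(|1-\langle f(\xi),w\rangle|\ge 1-|w|\) and \(|1-\langle z,\xi\rangle|\ge1-|z|\). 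Differentiation under the integral sign is therefore justified, and \(\Phi\) is real-analytic.

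\textbf{Step 1 (balanced condition).} Putting \(z=0\) and \(w=F(0)=0\) gives \(P(0,\xi)=1\), and \(\Phi(0,0)=0\) reads \(\int f\,d\sigma=0\). This is \eqref{balanced-condition}.

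\textbf{Step 2 (partial derivatives at \((0,0)\)).} For the \(w\)-derivative, expand \((1-\langle f,w\rangle)^{-1}=1+\langle f,w\rangle+O(|w|^2)\) and \(w/(1-|w|^2)=w+O(|w|^3)\). This gives
\[
        D_w\Phi(0,0)u=\int f\langle f,u\rangle\,d\sigma-u=-\mathcal A_f u.
\]
For the \(z\)-derivative, \(P(tv,\xi)=1+2nt\Real\langle v,\xi\rangle+O(t^2)\), since \(|1-\langle z,\xi\rangle|^{-2n}\approx1+2n\Real\langle z,\xi\rangle\) and \((1-|z|^2)^n=1+O(|z|^2)\). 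Hence \(D_z\Phi(0,0)v=\mathcal B_fv\). Here \(\Real\langle tv,\xi\rangle\) is symmetric in the order of the inner product, so it matches \eqref{Bf-definition}.

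\textbf{Step 3 (invertibility of \(\mathcal A_f\)), the main obstacle.} Once \(\mathcal A_f\) is invertible, the implicit function theorem gives a real-analytic local solution \(w(z)\) with \(w(0)=0\). By uniqueness of barycenters it coincides with \(F\) near \(0\), and differentiating \(\Phi(z,F(z))=0\) gives
\[
        dF_0=-D_w\Phi^{-1}D_z\Phi=\mathcal A_f^{-1}\mathcal B_f.
\]
For invertibility, compute the real quadratic form
\[
        \Real\langle\mathcal A_fu,u\rangle=|u|^2-\int\Real\bigl(\langle f,u\rangle\overline{\langle f,u\rangle}\bigr)\,d\sigma=|u|^2-\int|\langle f,u\rangle|^2\,d\sigma.
\]
Since \(|f|=1\), Cauchy--Schwarz gives \(|\langle f,u\rangle|\le|u|\), with equality only when \(f(\xi)\in\C u\). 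Equality for \(\sigma\)-a.e.\ \(\xi\) would force \(f(S^{2n-1})\) to lie in the circle \(S^{2n-1}\cap\C u\). This is impossible for a homeomorphism onto \(S^{2n-1}\), since \(n\ge1\) and the image is all of the sphere, which is not contained in a circle once \(2n-1>1\). For \(n=1\), equality would require \(|\langle f,u\rangle|=|u|\) identically, and for unit \(u\) one would need \(\int|\langle f,u\rangle|^2\,d\sigma=1\). But \(\int f\,d\sigma=0\) together with \(|\langle f,u\rangle|^2=1\) still allows this, since for \(n=1\) the quantity \(\langle f,u\rangle\) has modulus one automatically. The \(n=1\) case therefore needs a separate argument, namely \(\Real\langle\mathcal A_fu,u\rangle=\int|\Real(f\bar u)|^2\) or a similar convexity expression.

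I would treat both cases uniformly through the Hessian lemma. The form \(u\mapsto\Real\langle\mathcal A_fu,u\rangle\) is, up to a positive constant, the Bergman Hessian at \(0\) of \(\mathcal B_{f_*\sigma}\), namely \(\int\Hess^B_0\beta_{f(\xi)}(u,u)\,d\sigma\). By the Hessian lemma this form is nonnegative, and it vanishes only if \(u\) is tangent at \(0\) to the geodesic toward \(f(\xi)\) for \(\sigma\)-a.e.\ \(\xi\). That would mean \(f(\xi)\in\{\pm u/|u|\}\) for almost every \(\xi\), which contradicts continuity and surjectivity of \(f\). Hence the symmetric part of \(\mathcal A_f\) is positive definite, and \(\mathcal A_f\) is injective and therefore invertible.

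The final claim follows immediately: \(dF_0\) is singular if and only if \(\mathcal B_f\) is singular.
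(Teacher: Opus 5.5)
Your proposal is correct and is essentially the paper's proof: you apply the implicit function theorem to the coordinate Euler equation, compute the same two partial derivatives at \((0,0)\), and get invertibility of \(\mathcal A_f\) from positivity of the averaged Busemann Hessian, which you justify more explicitly than the paper does, via the Hessian lemma and surjectivity of \(f\). One computational slip needs fixing: since \(\langle f\langle f,u\rangle,u\rangle=\langle f,u\rangle^2\), one has \(\Real\langle\mathcal A_fu,u\rangle=|u|^2-\int\Real(\langle f,u\rangle^2)\,d\sigma\), which is a positive multiple of the averaged Hessian at \(0\), and not \(|u|^2-\int|\langle f,u\rangle|^2\,d\sigma\); with the correct formula your direct Cauchy--Schwarz argument already works for every \(n\ge1\), because equality forces \(f(\xi)=\pm u/|u|\), so the apparent \(n=1\) obstruction disappears.
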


\begin{proof}
The coordinate barycenter equation for \(w=F(z)\) is
\[
        \Psi(z,w)=0,
\]
where
\[
        \Psi(z,w)=
        \int_{S^{2n-1}}
        \frac{f(\xi)}{1-\langle f(\xi),w\rangle}
        P(z,\xi)\,d\sigma(\xi)
        -
        \frac{w}{1-|w|^2}.
\]
Here \(\Psi\) is real-analytic in the variables \((z,w)\) for
\((z,w)\in\B^n\times\B^n\). Indeed, the denominator
\(1-\langle f(\xi),w\rangle\) is locally uniformly bounded away from zero,
and the Poisson--Szeg\H{o} kernel is real-analytic in \(z\). Thus
differentiation under the integral sign is justified. Notice that no
differentiability of \(f\) is required.

Since \(F(0)=0\), the equation \(\Psi(0,0)=0\) gives
\[
        \int_{S^{2n-1}} f(\xi)\,d\sigma(\xi)=0,
\]
which is \eqref{balanced-condition}.

Differentiating \(\Psi(z,F(z))=0\) at \(z=0\), we use
\[
        D_zP(0,\xi)[v]=2n\Real\langle v,\xi\rangle.
\]
Therefore
\[
        D_z\Psi(0,0)[v]=\mathcal B_fv.
\]
On the other hand,
\[
        D_w\Psi(0,0)[u]
        =
        \int_{S^{2n-1}}
        f(\xi)\langle f(\xi),u\rangle\,d\sigma(\xi)-u
        =
        -\mathcal A_fu.
\]
The derivative \(D_w\Psi(0,0)\) is invertible, because it is the Hessian,
up to the standard identification of tangent and cotangent spaces, of the
averaged Busemann potential at its unique non-degenerate minimum. Hence the
implicit function theorem applies, and \(F\) is real-analytic near \(0\).

Now differentiating \(\Psi(z,F(z))=0\) gives
\[
        0
        =
        D_z\Psi(0,0)[v]
        +
        D_w\Psi(0,0)[dF_0v]
        =
        \mathcal B_fv-\mathcal A_f(dF_0v).
\]
Thus
\[
        dF_0v=\mathcal A_f^{-1}\mathcal B_fv,
\]
which proves \eqref{derivative-formula}. Since \(\mathcal A_f\) is invertible,
\(dF_0\) is singular if and only if \(\mathcal B_f\) is singular.
\end{proof}

\begin{remark}[The identity case]
For \(f=\Id\), unitary symmetry gives
\[
        \int_{S^{2n-1}}\xi\langle \xi,u\rangle\,d\sigma(\xi)=0
\]
and
\[
        2n\int_{S^{2n-1}}\xi\,\Real\langle v,\xi\rangle\,d\sigma(\xi)=v.
\]
Thus \(\mathcal A_{\Id}=I\), \(\mathcal B_{\Id}=I\), and
\(dE_B(\Id)_0=I\).
\end{remark}

\subsubsection{Automorphic normalization}
\label{section-automorphic-estimates}

For a real-linear map \(L\) between Euclidean spaces, we write
\[
        \|L\|=\sup_{|v|=1}|Lv|,
        \qquad
        \ell(L)=\inf_{|v|=1}|Lv|
\]
for its maximal and minimal singular values.  If \(F:\B^n\to\B^n\) is
differentiable, then
\[
        \|dF_z\|_{g_B},
        \qquad
        \ell_{g_B}(dF_z)
\]
denote the corresponding maximal and minimal singular values computed with
respect to the Bergman metric.  We set
\[
        K_B(F,z)=
        \frac{\|dF_z\|_{g_B}}{\ell_{g_B}(dF_z)}
\]
whenever \(\ell_{g_B}(dF_z)>0\), and
\[
        K_B(F)=\sup_{z\in\B^n}K_B(F,z).
\]

Let \(f:S^{2n-1}\to S^{2n-1}\) be a boundary homeomorphism and set
\[
        F=E_B(f).
\]
Fix \(z_0\in\B^n\).  Choose automorphisms \(B,A\in\Aut(\B^n)\) such that
\[
        B(0)=z_0,
        \qquad
        A(F(z_0))=0.
\]
Define
\[
        g=A\circ f\circ B|_{S^{2n-1}}.
\]
Then naturality gives
\[
        E_B(g)=A\circ F\circ B,
        \qquad
        E_B(g)(0)=0.
\]
Moreover,
\[
        dE_B(g)_0
        =
        dA_{F(z_0)}
        \circ
        dF_{z_0}
        \circ
        dB_0.
\]
Since \(A\) and \(B\) are Bergman isometries, singular values measured with
respect to \(g_B\) are unchanged by these pre- and post-compositions.  Hence
\[
        K_B(F,z_0)=K_B(E_B(g),0).
\]
For normalized \(g\), Proposition~\ref{first-harmonic-formula} gives
\[
        dE_B(g)_0=\mathcal A_g^{-1}\mathcal B_g.
\]
Thus every local distortion estimate for \(E_B(f)\) reduces, after
automorphic normalization, to estimating the real-linear operator
\[
        \mathcal A_g^{-1}\mathcal B_g
\]
at the origin.

\section{Proofs of the main results}
\label{section-main-proofs}

\subsection{Basic properties of the barycentric extension}

\begin{theorem}[Well-definedness and naturality]\label{naturality-theorem}
For every boundary homeomorphism \(f:S^{2n-1}\to S^{2n-1}\), the map
\(E_B(f)\) is well-defined.  If \(A,B\in\Aut(\B^n)\), then
\begin{equation}\label{naturality-equation}
        E_B(A\circ f\circ B)=A\circ E_B(f)\circ B.
\end{equation}
In particular, if \(f=A|_{S^{2n-1}}\), then \(E_B(f)=A\).
\end{theorem}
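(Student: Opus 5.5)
The plan is to assemble the statement directly from Proposition~\ref{existence-uniqueness-barycenter}, Proposition~\ref{equivariance-barycenter}, Proposition~\ref{visual-covariance} and Proposition~\ref{visual-barycenter}; no new analysis should be needed.

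\emph{Well-definedness.} Fix \(z\in\B^n\). The density \(P(z,\cdot)\) is continuous and strictly positive on \(S^{2n-1}\), so \(\sigma_z\) is non-atomic with full support. Since \(f\) is a homeomorphism, \(f_*\sigma_z\) is a probability measure with \((f_*\sigma_z)(\{p\})=\sigma_z(\{f^{-1}(p)\})=0\) for every point \(p\). It also has full support, because for a nonempty open set \(U\) the preimage \(f^{-1}(U)\) is nonempty and open, so \((f_*\sigma_z)(U)=\sigma_z(f^{-1}(U))>0\). Thus \(f_*\sigma_z\) is admissible with full support. Proposition~\ref{existence-uniqueness-barycenter} then gives a unique minimizer of \(\mathcal B_{f_*\sigma_z}\), and \(E_B(f)(z)\) is well-defined.

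\emph{Naturality.} I will use the boundary action of an automorphism \(A\) (still written \(A\)) together with functoriality of push-forward, \((A\circ f\circ B)_*=A_*f_*B_*\). Combining this with Proposition~\ref{visual-covariance} for \(B\) and Proposition~\ref{equivariance-barycenter} for \(A\) gives
\[
E_B(A\circ f\circ B)(z)=\barB\bigl(A_*f_*B_*\sigma_z\bigr)=\barB\bigl(A_*f_*\sigma_{B(z)}\bigr)=A\bigl(\barB(f_*\sigma_{B(z)})\bigr)=A\bigl(E_B(f)(B(z))\bigr).
\]
The equivariance step is legitimate because \(f_*\sigma_{B(z)}\) is admissible with full support by the first paragraph.

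\emph{The M\"obius case.} For \(f=A|_{S^{2n-1}}\), write \(f=A\circ\Id\circ\Id\) and apply naturality. It remains to show \(E_B(\Id)(z)=\barB(\sigma_z)=z\), which is exactly Proposition~\ref{visual-barycenter}. Hence \(E_B(A|_{S^{2n-1}})=A\circ E_B(\Id)=A\).

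The only points needing care are that push-forward under a homeomorphism preserves both non-atomicity and full support (in particular, the full-support hypothesis is what yields uniqueness in Proposition~\ref{existence-uniqueness-barycenter}), and that the boundary extension of \(A\) is a homeomorphism of \(S^{2n-1}\), so the push-forward identities make sense. Neither is a genuine obstacle; the argument is essentially bookkeeping over the earlier propositions.
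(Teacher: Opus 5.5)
Your proposal is correct and follows the paper's proof essentially step for step. Well-definedness comes from Proposition~\ref{existence-uniqueness-barycenter}, naturality from the covariance of visual measures combined with barycenter equivariance, and the M\"obius case from Proposition~\ref{visual-barycenter}; your explicit check that \(f_*\sigma_z\) remains non-atomic with full support just spells out what the paper states right after Definition~\ref{extension-definition}.
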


\begin{proof}
Well-definedness follows from Proposition \ref{existence-uniqueness-barycenter}.
For naturality, using the covariance of visual measures and barycenter
equivariance,
\[
\begin{aligned}
        E_B(A\circ f\circ B)(z)
        &=\barB((A\circ f\circ B)_*\sigma_z)\\
        &=\barB(A_*f_*\sigma_{B(z)})\\
        &=A\bigl(\barB(f_*\sigma_{B(z)})\bigr)\\
        &=A(E_B(f)(B(z))).
\end{aligned}
\]
If \(f=A|_{S^{2n-1}}\), then
\[
        E_B(f)(z)=\barB(A_*\sigma_z)=A(\barB(\sigma_z))=A(z).
\]
\end{proof}

\begin{theorem}[Continuity in the ball]\label{interior-continuity}
Let \(f:S^{2n-1}\to S^{2n-1}\) be a homeomorphism.  Then \(E_B(f)\) is
continuous on \(\B^n\).
\end{theorem}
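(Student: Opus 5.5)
We prove continuity of \(F=E_B(f)\) at an arbitrary point \(z_0\in\B^n\) with the implicit function theorem, applied to the coordinate barycenter equation. This upgrades the local statement in the proof of Proposition~\ref{first-harmonic-formula}, which was carried out only at the normalized point \(0\), to an arbitrary point. A direct argument needs no normalization. Define
\[
        \Psi(z,w)=
        \int_{S^{2n-1}}
        \frac{f(\xi)}{1-\langle f(\xi),w\rangle}\,P(z,\xi)\,d\sigma(\xi)
        -\frac{w}{1-|w|^2},
\]
so that by \eqref{extension-coordinate-euler} the point \(w=F(z)\) is characterized as the unique zero of \(\Psi(z,\cdot)\) in \(\B^n\). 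Uniqueness holds because a zero of \(\Psi(z,\cdot)\) is a critical point of the convex proper potential \(\mathcal B_{f_*\sigma_z}\), hence its unique minimizer by Proposition~\ref{existence-uniqueness-barycenter}.

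Step one is regularity of \(\Psi\). For \(w\) in a compact subset of \(\B^n\), the denominator satisfies \(|1-\langle f(\xi),w\rangle|\ge 1-|w|\), uniformly in \(\xi\). For \(z\) in a compact subset, \(P(z,\xi)\) is real-analytic in \(z\) with all derivatives bounded uniformly in \(\xi\). Therefore \(\Psi\) is \(C^1\) (indeed real-analytic) on \(\B^n\times\B^n\), and differentiation under the integral sign is justified using only measurability and boundedness of \(f\).

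Step two, the only substantive point, is invertibility of \(D_w\Psi(z_0,w_0)\), where \(w_0=F(z_0)\). Up to the identification of \(\partial/\partial\overline w\) with the Bergman gradient (multiplication by the invertible inverse metric tensor), \(\Psi(z_0,\cdot)\) is the gradient of the potential \(\mathcal B_{f_*\sigma_{z_0}}\). At the critical point \(w_0\), its \(w\)-derivative is therefore the Hessian \(\int\Hess^B_{w_0}\beta_{f(\xi)}\,d\sigma_{z_0}(\xi)\), composed with invertible identifications. By the Hessian lemma, this Hessian is positive semidefinite. Suppose \(v\neq0\) lies in its kernel. Then \(\Hess^B_{w_0}\beta_{f(\xi)}(v,v)=0\) for \(\sigma_{z_0}\)-a.e.\ \(\xi\), which by the equality case forces \(f(\xi)\) to be one of the two endpoints of the geodesic through \(w_0\) in direction \(v\). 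But \(f_*\sigma_{z_0}\) has full support and no atoms, a contradiction. So the Hessian is positive definite and \(D_w\Psi(z_0,w_0)\) is invertible.

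Step three applies the implicit function theorem. It gives neighborhoods \(U\ni z_0\) and \(V\ni w_0\) and a continuous (indeed real-analytic) map \(\phi:U\to V\) with \(\Psi(z,\phi(z))=0\). By the uniqueness from Proposition~\ref{existence-uniqueness-barycenter}, \(\phi(z)=F(z)\) on \(U\). Hence \(F\) is continuous at \(z_0\); in fact this argument already yields interior real-analyticity.

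Should one prefer to avoid the Hessian argument, there is a fallback. For \(z_j\to z_0\), the potentials \(\mathcal B_{f_*\sigma_{z_j}}\) converge to \(\mathcal B_{f_*\sigma_{z_0}}\) locally uniformly. With a uniform properness bound near \(z_0\), this gives boundedness of \(F(z_j)\) in a compact set, and uniqueness of the limit minimizer then gives convergence. The main obstacle in that route is the uniform properness, which needs the admissibility estimate of Proposition~\ref{existence-uniqueness-barycenter} uniformly in \(z\) near \(z_0\). For this reason the implicit-function route above is preferred.
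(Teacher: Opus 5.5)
Your argument is correct, but for this statement it is not the route the paper takes. It is essentially the paper's proof of Theorem~\ref{smooth-regularity} (interior real-analyticity), and continuity is a corollary of that theorem. The paper proves continuity by what you call the \emph{fallback}:
\begin{itemize}
\item $\sigma_{z_j}\to\sigma_z$ weakly, hence $f_*\sigma_{z_j}\to f_*\sigma_z$ weakly;
\item properness keeps $w_j=E_B(f)(z_j)$ in a compact subset of $\B^n$;
\item every subsequential limit minimizes the limiting potential;
\item uniqueness of the minimizer identifies that limit with $E_B(f)(z)$.
\end{itemize}

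\textbf{What your route costs and buys.} The implicit-function route needs more structure:
\begin{itemize}
\item $C^1$ dependence of $\Psi$ on $(z,w)$;
\item positive definiteness of the averaged Hessian, from the equality case of the Hessian lemma together with non-atomicity of $f_*\sigma_{z_0}$;
\item the fact that zeros of $\Psi(z,\cdot)$ are global minimizers, by geodesic convexity.
\end{itemize}
You make the last point explicit, and it matters: it is what identifies the implicit branch $\phi$ with $F$ without any a priori localization of $F(z)$. In return, you get real-analyticity at once and never face a boundary-escape issue.

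\textbf{What the paper's route buys.} The soft argument uses no derivatives. More importantly, it is robust when the boundary map varies. The same argument with $f_j\to f$ uniformly gives the joint continuity of Lemma~\ref{lemma-joint-continuity-barycentric}, which drives the compactness arguments behind the quasi-isometry and small-distortion theorems.

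\textbf{The delicate point you flagged.} You correctly identify that the soft route's one delicate step is a properness bound that is uniform along the sequence $f_*\sigma_{z_j}$. The paper's proof of this theorem asserts it in one line by citing Proposition~\ref{existence-uniqueness-barycenter}. The uniform estimate is written out only in the proof of Lemma~\ref{lemma-joint-continuity-barycentric}. There one takes a closed neighbourhood $K$ of the putative boundary limit with $\limsup_j\mu_j(K)\le\mu(K)<a<1/2$, by the Portmanteau theorem. The resulting lower bound on $\mathcal B_{\mu_j}(w_j)$ tends to $+\infty$, contradicting $\mathcal B_{\mu_j}(w_j)\le\mathcal B_{\mu_j}(0)=0$.
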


\begin{proof}
If \(z_j\to z\) in \(\B^n\), then the densities \(P(z_j,\xi)\) converge
uniformly to \(P(z,\xi)\) on \(S^{2n-1}\).  Hence \(\sigma_{z_j}\to\sigma_z\)
weakly, and therefore \(f_*\sigma_{z_j}\to f_*\sigma_z\) weakly.

Let \(w_j=E_B(f)(z_j)\).  The properness estimate in Proposition
\ref{existence-uniqueness-barycenter} prevents \(w_j\) from escaping to the
boundary along a subsequence.  Thus every subsequential limit lies in \(\B^n\)
and minimizes the limiting potential \(\mathcal B_{f_*\sigma_z}\).  By
uniqueness of the minimizer, this limit is \(E_B(f)(z)\).  Hence
\(w_j\to E_B(f)(z)\).
\end{proof}

\begin{theorem}[Boundary extension]\label{boundary-extension}
Let \(f:S^{2n-1}\to S^{2n-1}\) be a homeomorphism. Then
\[
        \lim_{z\to\eta}E_B(f)(z)=f(\eta),
        \qquad \eta\in S^{2n-1}.
\]
Consequently, \(E_B(f)\) extends continuously to
\(\overline{\B^n}\) with boundary value \(f\).
\end{theorem}

\begin{proof}
Fix \(\eta\in S^{2n-1}\) and put
\[
        p=f(\eta).
\]
Let \(z_j\to\eta\) in \(\B^n\), and set
\[
        \mu_j=f_*\sigma_{z_j},
        \qquad
        w_j=E_B(f)(z_j)=\barB(\mu_j).
\]
Since the Poisson--Szeg\H{o} measures form an approximate identity on
\(S^{2n-1}\),
\[
        \sigma_{z_j}\rightharpoonup\delta_\eta.
\]
As \(f\) is continuous, it follows that
\[
        \mu_j=f_*\sigma_{z_j}
        \rightharpoonup
        \delta_p.
\]
Indeed, for every continuous function
\(\varphi:S^{2n-1}\to\mathbb R\),
\[
\int_{S^{2n-1}}\varphi(\xi)\,d\mu_j(\xi)
=
\int_{S^{2n-1}}\varphi(f(\xi))\,d\sigma_{z_j}(\xi)
\longrightarrow
\varphi(f(\eta))
=
\varphi(p).
\]

We claim that \(w_j\to p\). Since the closed Euclidean ball
\(\overline{\B^n}\) is compact, it is enough to show that every cluster point
of \((w_j)\) is \(p\). Passing to a subsequence, suppose that
\[
        w_j\longrightarrow w_\infty\in\overline{\B^n}.
\]

First suppose that \(w_\infty\in\B^n\). For a probability measure \(\mu\) on
\(S^{2n-1}\), recall that
\[
        \mathcal B_\mu(w)
        =
        \int_{S^{2n-1}}\beta_\xi(w)\,d\mu(\xi).
\]
On every compact subset \(K\Subset\B^n\), the function
\[
        (\xi,w)\longmapsto\beta_\xi(w)
\]
is continuous on the compact set \(S^{2n-1}\times K\). Hence the weak
convergence \(\mu_j\rightharpoonup\delta_p\) implies
\[
        \mathcal B_{\mu_j}
        \longrightarrow
        \beta_p
\]
uniformly on \(K\).

Since \(w_j\) minimizes \(\mathcal B_{\mu_j}\), for every fixed
\(u\in\B^n\),
\[
        \mathcal B_{\mu_j}(w_j)
        \le
        \mathcal B_{\mu_j}(u).
\]
Choosing a compact subset of \(\B^n\) containing \(u\), \(w_\infty\), and
\(w_j\) for all sufficiently large \(j\), and passing to the limit, we obtain
\[
        \beta_p(w_\infty)\le\beta_p(u).
\]
Since \(u\in\B^n\) was arbitrary, \(w_\infty\) would be a global minimum of
\(\beta_p\) in \(\B^n\). This is impossible, because along the radial
geodesic \(r\mapsto rp\),
\[
        \beta_p(rp)
        =
        \log\frac{|1-r|^2}{1-r^2}
        =
        \log\frac{1-r}{1+r}
        \longrightarrow -\infty
        \qquad (r\to1^-).
\]
Thus no cluster point of \((w_j)\) can lie in \(\B^n\).

It remains to exclude a boundary cluster point different from \(p\).
Suppose therefore that
\[
        w_j\longrightarrow q\in S^{2n-1},
        \qquad q\ne p.
\]
Since \(w_j=\barB(\mu_j)\), the coordinate barycenter equation gives
\[
        \int_{S^{2n-1}}
        \frac{\xi}{1-\langle \xi,w_j\rangle}\,d\mu_j(\xi)
        =
        \frac{w_j}{1-|w_j|^2}.
\]
Multiplying by \(1-|w_j|^2\), we obtain
\begin{equation}\label{boundary-barycenter-scaled}
        \int_{S^{2n-1}}
        \frac{(1-|w_j|^2)\xi}
             {1-\langle \xi,w_j\rangle}\,d\mu_j(\xi)
        =
        w_j.
\end{equation}

Because \(p\ne q\),
\[
        |1-\langle p,q\rangle|>0.
\]
Hence we may choose a neighborhood \(U\) of \(p\) and a constant \(c>0\)
such that, for all sufficiently large \(j\),
\[
        |1-\langle \xi,w_j\rangle|\ge c,
        \qquad \xi\in U.
\]
Therefore
\[
\sup_{\xi\in U}
\left|
\frac{(1-|w_j|^2)\xi}
     {1-\langle \xi,w_j\rangle}
\right|
\le
\frac{1-|w_j|^2}{c}
\longrightarrow0.
\]

On the other hand, for every \(\xi\in S^{2n-1}\),
\[
        |1-\langle \xi,w_j\rangle|
        \ge 1-|w_j|,
\]
and hence
\[
\left|
\frac{(1-|w_j|^2)\xi}
     {1-\langle \xi,w_j\rangle}
\right|
\le
\frac{1-|w_j|^2}{1-|w_j|}
=
1+|w_j|
\le2.
\]
Since \(\mu_j\rightharpoonup\delta_p\) and \(U\) is a neighborhood of \(p\),
\[
        \mu_j(S^{2n-1}\setminus U)\longrightarrow0.
\]
Consequently,
\[
\begin{aligned}
\left|
\int_{S^{2n-1}}
\frac{(1-|w_j|^2)\xi}
     {1-\langle \xi,w_j\rangle}\,d\mu_j(\xi)
\right|
&\le
\sup_{\xi\in U}
\left|
\frac{(1-|w_j|^2)\xi}
     {1-\langle \xi,w_j\rangle}
\right|  \\
&\qquad
+2\,\mu_j(S^{2n-1}\setminus U)
\longrightarrow0.
\end{aligned}
\]
But by \eqref{boundary-barycenter-scaled} the left-hand side equals
\(|w_j|\), which tends to \(1\). This is a contradiction.

We have therefore ruled out every cluster point except \(p\). Hence
\[
        w_j=E_B(f)(z_j)\longrightarrow p=f(\eta).
\]
Since the sequence \(z_j\to\eta\) was arbitrary,
\[
        \lim_{z\to\eta}E_B(f)(z)=f(\eta).
\]
Together with the continuity of \(E_B(f)\) in \(\B^n\), this gives a
continuous extension to \(\overline{\B^n}\) whose boundary value is \(f\).
\end{proof}
\begin{theorem}[Interior real-analyticity]\label{smooth-regularity}
Let \(f:S^{2n-1}\to S^{2n-1}\) be a homeomorphism. Then its Bergman
barycentric extension
\[
        E_B(f):\B^n\to\B^n
\]
is real-analytic in the interior of \(\B^n\). More generally, the same
conclusion holds for every continuous map \(f:S^{2n-1}\to S^{2n-1}\) such that,
for every \(z\in\B^n\), the measure \(f_*\sigma_z\) is admissible and has full
support on \(S^{2n-1}\).
\end{theorem}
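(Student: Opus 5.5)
We will apply the real-analytic implicit function theorem to the coordinate barycenter equation at an arbitrary point \(z_0\in\B^n\); automorphic normalization is not needed, though it can simplify the nondegeneracy check. Set
\[
        \Psi(z,w)=
        \int_{S^{2n-1}}
        \frac{f(\xi)}{1-\langle f(\xi),w\rangle}\,P(z,\xi)\,d\sigma(\xi)
        -\frac{w}{1-|w|^2},
        \qquad (z,w)\in\B^n\times\B^n .
\]
By \eqref{extension-coordinate-euler}, \(w=E_B(f)(z)\) is the unique solution of \(\Psi(z,w)=0\); uniqueness comes from Proposition~\ref{existence-uniqueness-barycenter} under the stated admissibility and full-support hypothesis.

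\textbf{Step 1: \(\Psi\) is real-analytic.} For \(|w|\le r<1\) and \(|z|\le r\), the kernels \(1/(1-\langle\zeta,w\rangle)\) and \(P(z,\xi)\) expand in power series in \((z,\bar z,w,\bar w)\). These series converge uniformly in \(\zeta,\xi\in S^{2n-1}\) on a complex neighborhood of the polydisc, since \(|\langle\zeta,w\rangle|\le|w|\). Integrating term by term against the bounded measurable function \(f\) gives a convergent power series. Hence \(\Psi\) is real-analytic, using only continuity (indeed boundedness) of \(f\).

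\textbf{Step 2: nondegeneracy of \(D_w\Psi\).} Fix \(z_0\) and \(w_0=E_B(f)(z_0)\). We must show that \(D_w\Psi(z_0,w_0)\) is invertible as a real-linear map of \(\C^n\). This is the main obstacle. Since \(\Psi(z,\cdot)\) is, up to sign, the \(\bar\partial\)-gradient of \(\mathcal B_{\mu_z}\) with \(\mu_z=f_*\sigma_z\), its derivative at a critical point is identified with the real Hessian of \(\mathcal B_{\mu_{z_0}}\). At a critical point, coordinate Hessian and Bergman Hessian agree.

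By the Hessian lemma,
\[
        \Hess^B_{w_0}\mathcal B_{\mu}(v,v)=\int\Hess^B_{w_0}\beta_\zeta(v,v)\,d\mu(\zeta)\ge0,
\]
where differentiation under the integral is justified as in Step 1. Equality for some \(v\ne0\) forces \(\Hess^B_{w_0}\beta_\zeta(v,v)=0\) for \(\mu\)-a.e.\ \(\zeta\), and by continuity in \(\zeta\) for all \(\zeta\) in \(\operatorname{supp}\mu=S^{2n-1}\). By the equality case of the lemma, every \(\zeta\) would then be an endpoint of the single geodesic through \(w_0\) in direction \(v\), which is absurd. So the Hessian is positive definite.

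Alternatively, we could normalize \(w_0=0\) by an automorphism \(A\), using Proposition~\ref{equivariance-barycenter}, and check directly that \(\mathcal A_f u=u-\int\zeta\langle\zeta,u\rangle\,d\mu\) has \(\Real\langle\mathcal A_f u,u\rangle=|u|^2-\int(\Real\langle\zeta,u\rangle)^2\,d\mu\), combined with the explicit Hessian term \(2b^2\). The Hessian argument is cleaner.

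\textbf{Step 3: conclusion.} The analytic implicit function theorem gives a real-analytic \(w(z)\) near \(z_0\) with \(\Psi(z,w(z))=0\) and \(w(z_0)=w_0\). By uniqueness of the barycenter, \(w(z)=E_B(f)(z)\), which is also consistent with Theorem~\ref{interior-continuity}. Since \(z_0\) was arbitrary, \(E_B(f)\) is real-analytic on \(\B^n\).

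The general statement for continuous \(f\) follows verbatim, since only admissibility, full support, and boundedness of \(f\) were used. For a homeomorphism \(f\), the pushforward \(f_*\sigma_z\) is non-atomic with full support, as already noted after Definition~\ref{extension-definition}.
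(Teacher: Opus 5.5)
Your proposal is correct and follows essentially the same route as the paper: real-analyticity of the coordinate barycenter map, positive definiteness of the averaged Busemann Hessian via the equality case of the Hessian lemma plus full support, then the real-analytic implicit function theorem and uniqueness of the barycenter. Your power-series argument in Step~1 actually justifies analyticity of $\Psi$ more cleanly than the paper's appeal to bounded derivatives. There is one small slip in your unused aside: $\Real\langle\mathcal A_f u,u\rangle=|u|^2-\int\Real(\langle\zeta,u\rangle^2)\,d\mu$, so a term $+\int(\operatorname{Im}\langle\zeta,u\rangle)^2\,d\mu$ is missing. That term is nonnegative, so the positivity conclusion is unaffected.
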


\begin{proof}
For \(z,w\in\B^n\), define
\[
        \Phi_f(z,w)
        =
        \int_{S^{2n-1}}
        \frac{f(\xi)}{1-\langle f(\xi),w\rangle}
        P(z,\xi)\,\dd\sigma(\xi)
        -
        \frac{w}{1-|w|^2},
\]
where
\[
        P(z,\xi)
        =
        \left(
        \frac{1-|z|^2}{|1-\langle z,\xi\rangle|^2}
        \right)^n
\]
is the Poisson--Szeg\H{o} kernel. The barycenter equation for
\(w=E_B(f)(z)\) is
\[
        \Phi_f(z,w)=0.
\]

We first prove that \(\Phi_f\) is real-analytic on
\(\B^n\times\B^n\). Fix \((z_0,w_0)\in\B^n\times\B^n\). Choose relatively
compact neighborhoods \(U,V\Subset\B^n\) of \(z_0,w_0\), respectively. Then
there are constants \(r,\rho<1\) such that
\[
        |z|\le r,\qquad |w|\le \rho
\]
for all \(z\in U\), \(w\in V\). Hence, uniformly in
\(\xi\in S^{2n-1}\),
\[
        |1-\langle f(\xi),w\rangle|
        \ge 1-\rho>0
\]
and
\[
        |1-\langle z,\xi\rangle|
        \ge 1-r>0.
\]
Thus the denominators occurring in the integrand are bounded away from zero
on \(U\times V\), uniformly in \(\xi\).

Since \(f(\xi)\in S^{2n-1}\), all derivatives of
\[
        (z,w)\mapsto
        \frac{f(\xi)}{1-\langle f(\xi),w\rangle}P(z,\xi)
\]
with respect to the real and imaginary parts of \(z\) and \(w\) are locally
uniformly bounded by constants depending only on \(U,V\) and the order of
differentiation. Differentiation under the integral is therefore justified to
all orders. The integrand is real-analytic in \((z,w)\), and hence
\(\Phi_f\) is real-analytic on \(\B^n\times\B^n\).

Now fix \(z_0\in\B^n\) and put
\[
        w_0=E_B(f)(z_0).
\]
Let
\[
        \mu_0=f_*\sigma_{z_0}.
\]
The point \(w_0\) is the unique minimizer of the averaged Busemann potential
\[
        \mathcal B_{\mu_0}(y)
        =
        \int_{S^{2n-1}}\beta_\eta(y)\,\dd\mu_0(\eta),
        \qquad y\in\B^n.
\]
The equation
\[
        \Phi_f(z_0,w_0)=0
\]
is precisely the vanishing of the real gradient of
\(\mathcal B_{\mu_0}\) at \(w_0\), written in complex coordinates. Therefore
\(D_w\Phi_f(z_0,w_0)\) is, up to the standard identification of complex
coordinates with real tangent coordinates, the Hessian of
\(\mathcal B_{\mu_0}\) at \(w_0\).

We claim that this averaged Hessian is positive definite. Indeed, for every
\(\eta\in S^{2n-1}\), the Busemann function
\[
        y\mapsto \beta_\eta(y)
\]
has nonnegative Hessian in the Bergman metric:
\[
        \Hess_y\beta_\eta(v,v)\ge 0
\]
for every \(y\in\B^n\) and every \(v\in T_y\B^n\). Moreover, for fixed
\(y\in\B^n\) and fixed nonzero \(v\in T_y\B^n\), the equality
\[
        \Hess_y\beta_\eta(v,v)=0
\]
can occur only when \(v\) is tangent to the geodesic through \(y\) having
endpoint \(\eta\) at infinity. Thus the set
\[
        Z_{y,v}
        :=
        \left\{
        \eta\in S^{2n-1}:
        \Hess_y\beta_\eta(v,v)=0
        \right\}
\]
is a proper closed subset of \(S^{2n-1}\).

Since \(\mu_0=f_*\sigma_{z_0}\) has full support on \(S^{2n-1}\), this proper
closed subset cannot have full \(\mu_0\)-measure. Consequently, for every
nonzero \(v\in T_{w_0}\B^n\),
\[
\begin{aligned}
        \Hess_{w_0}\mathcal B_{\mu_0}(v,v)
        &=
        \int_{S^{2n-1}}
        \Hess_{w_0}\beta_\eta(v,v)\,\dd\mu_0(\eta)  \\
        &>0.
\end{aligned}
\]
Hence
\[
        \Hess_{w_0}\mathcal B_{\mu_0}
\]
is positive definite. Therefore \(D_w\Phi_f(z_0,w_0)\) is invertible.

By the real-analytic implicit function theorem, there exist neighborhoods
\(U_0\subset\B^n\) of \(z_0\) and \(V_0\subset\B^n\) of \(w_0\), and a
real-analytic map
\[
        W:U_0\to V_0
\]
such that
\[
        \Phi_f(z,W(z))=0
\]
for all \(z\in U_0\). By uniqueness of the barycenter, one has
\[
        W(z)=E_B(f)(z)
\]
for \(z\in U_0\). Thus \(E_B(f)\) is real-analytic in a neighborhood of
\(z_0\). Since \(z_0\in\B^n\) was arbitrary, \(E_B(f)\) is real-analytic
throughout \(\B^n\).
\end{proof}

\subsection{Proof of the quasi-isometry theorem}
\label{section-general-quasi-isometry}

\begin{lemma}[Compactness of the barycentrically normalized class]
\label{lemma-normalized-qm-compactness}
Fix a distortion function \(\theta\), and let
\[
        \mathcal Q_\theta^0
        =\{g:S^{2n-1}\to S^{2n-1}:
        g\text{ is }\theta\text{-CR-quasi-M\"obius and }E_B(g)(0)=0\}.
\]
Then \(\mathcal Q_\theta^0\) is compact in the uniform topology.  Moreover,
\[
        (\mathcal Q_\theta^0)^{-1}
        =\{g^{-1}:g\in\mathcal Q_\theta^0\}
\]
is compact in the uniform topology.
\end{lemma}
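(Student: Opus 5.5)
I will prove compactness by Arzel\`a--Ascoli together with closedness of the class under uniform limits. The key point is to get uniform equicontinuity of \(\mathcal Q_\theta^0\) from the normalization \(E_B(g)(0)=0\), i.e. from the balancing condition \eqref{balanced-condition}, \(\int g\,d\sigma=0\).

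\textbf{Step 1 (non-concentration).} I claim there is \(\delta=\delta(\theta,n)>0\) such that every \(g\in\mathcal Q_\theta^0\) maps some three points that are pairwise \(\delta\)-separated in \(d_{CR}\) to three points that are pairwise \(\delta\)-separated. I argue by contradiction. Suppose \(g_k\in\mathcal Q_\theta^0\) fails this with \(\delta_k\to0\). Fix three well-separated points \(a_1,a_2,a_3\) on the sphere. Then two of \(g_k(a_i)\) coalesce. The standard quasi-M\"obius argument (V\"ais\"al\"a's three-point normalization) then shows that \(g_k\) maps the complement of a shrinking neighborhood of some point \(x_k\) into a shrinking neighborhood of a point \(p_k\). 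Since \(\sigma\) is nonatomic, \(\sigma\)-almost all of the mass is pushed near \(p_k\), so \(\int g_k\,d\sigma\) is close to \(p_k\), which has modulus \(1\). This contradicts \(\int g_k\,d\sigma=0\).

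Carrying this out requires the cross-ratio estimate. Take points \(\xi,\eta\) far from \(x_k\) together with two reference points whose images stay separated. The cross-ratio on the left is bounded, so by \eqref{general-cr-quasimobius} the images have bounded cross-ratio, which forces the image distances to be small. This step is the main obstacle: the quantifiers have to be arranged carefully, using \(\theta\) and its inverse behaviour, the latter available because the inverse is quasi-M\"obius with a controlled distortion function.

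\textbf{Step 2 (equicontinuity).} On the compact sphere, a \(\theta\)-quasi-M\"obius map that sends a \(\delta\)-separated triple to a \(\delta\)-separated triple is \(\eta'\)-quasisymmetric, with \(\eta'\) depending only on \(\theta,\delta\) (V\"ais\"al\"a, cited as \cite{Vaisala}). Such maps are uniformly equicontinuous: for \(d_{CR}(\xi,\zeta)\) small, compare with a triple point at distance \(\ge\delta/2\) from \(\xi\), whose image is at distance bounded above by \(\operatorname{diam}\). The same holds for the inverses, since \(g^{-1}\) is quasi-M\"obius with distortion depending only on \(\theta\) and maps the image triple back to a separated triple. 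Arzel\`a--Ascoli then gives uniform subsequential limits \(g_k\to g\) with \(g_k^{-1}\to h\) simultaneously.

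\textbf{Step 3 (closedness).} Passing to the limit in \(g_k\circ g_k^{-1}=\Id=g_k^{-1}\circ g_k\), which is legitimate by equicontinuity, gives \(g\circ h=h\circ g=\Id\), so \(g\) is a homeomorphism. The inequality \eqref{general-cr-quasimobius} passes to the limit pointwise for distinct quadruples, because \(g\) is injective and \(d_{CR}\) is continuous (assuming \(\theta\) is continuous). Finally I need \(E_B(g)(0)=0\). We have \(\int g\,d\sigma=\lim\int g_k\,d\sigma=0\). For a homeomorphism \(g\), equation \eqref{coordinate-barycenter-equation} at \(w=0\) reads \(\int g\,d\sigma=0\), and by the uniqueness in Proposition~\ref{existence-uniqueness-barycenter} this characterizes \(\barB(g_*\sigma)=0\). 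Hence \(g\in\mathcal Q_\theta^0\). The limit \(h=g^{-1}\) of the inverses lies in \((\mathcal Q_\theta^0)^{-1}\), which gives compactness of the inverse class as well.
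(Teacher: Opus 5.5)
Your proposal is correct in outline and rests on the same mechanism as the paper. The balancing identity $\int g\,d\sigma=0$ forbids degeneration to a constant, and both the quasi-M\"obius inequality and the balance survive uniform limits; the balance then gives back $E_B(g)(0)=0$ by convexity and uniqueness.

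The route to compactness is different. The paper quotes the compactness alternative for uniformly quasi-M\"obius maps \cite[Corollary~2.7]{Haissinsky}: either equicontinuity, or convergence to a constant $b$ off a single point $a$. It kills the second branch by dominated convergence. It then argues separately that the uniform limit is a homeomorphism (two-sided cross-ratio bounds plus surjectivity), and that inverses of uniformly convergent homeomorphisms of a compact space converge uniformly.

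You instead prove a uniform three-point normalization with $\delta=\delta(\theta,n)$ and feed it into V\"ais\"al\"a's quasi-M\"obius-to-quasisymmetric theorem. This gives one distortion function $\eta'(\theta,n)$ for every map in $\mathcal Q_\theta^0$ and for every inverse. Both families are therefore equicontinuous at once, and the homeomorphism property of the limit and the compactness of $(\mathcal Q_\theta^0)^{-1}$ follow from passing to the limit in $g_k\circ g_k^{-1}=\Id$. Your version is more self-contained and quantitative (a uniform modulus of continuity for the whole normalized class). The paper's is shorter because it outsources the dichotomy.

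The collapse step you flag as the main obstacle needs one correction. A \emph{bounded} cross-ratio in the domain does not force image distances to be small. You need a cross-ratio that tends to $0$, and then you use $\theta(t)\to0$ as $t\to0$.

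Concretely, apply the failure of Step~1 (with $\delta_k\to0$) to the preimages of a fixed target triple whose points are pairwise $d_{CR}$-separated by $d_0>0$. This yields $u_k,v_k$ with $\epsilon_k:=d_{CR}(u_k,v_k)\le\delta_k$ but $d_{CR}(g_ku_k,g_kv_k)\ge d_0$. Alternatively, run the argument for $g_k^{-1}$ with $u_k=g_k(a_1)$, $v_k=g_k(a_2)$, and invert the conclusion.

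Choose $r_k\to0$ with $\epsilon_k/r_k^2\to0$. For distinct $\xi,\eta$ outside the $d_{CR}$-ball $B(u_k,r_k)$ one has
\[
[\xi,u_k,\eta,v_k]_{CR}=\frac{d_{CR}(\xi,\eta)\,d_{CR}(u_k,v_k)}{d_{CR}(\xi,v_k)\,d_{CR}(u_k,\eta)}\le\frac{\sqrt2\,\epsilon_k}{r_k(r_k-\epsilon_k)}\longrightarrow0 .
\]
The $d_{CR}$-diameter of $S^{2n-1}$ is $\sqrt2$, so \eqref{general-cr-quasimobius} gives
\[
d_{CR}(g_k\xi,g_k\eta)\le\frac{2}{d_0}\,\theta\Bigl(\frac{\sqrt2\,\epsilon_k}{r_k(r_k-\epsilon_k)}\Bigr)\longrightarrow0 .
\]
Thus $g_k$ maps the complement of a ball of radius $r_k\to0$ into a set of diameter tending to $0$. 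Since $\sigma(B(u_k,r_k))\to0$ uniformly by unitary invariance, your balancing contradiction goes through.

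A minor point: in Step~3 you do not need continuity of $\theta$, because the right-hand side of \eqref{general-cr-quasimobius} does not depend on $k$.
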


\begin{proof}
Let \(g_j\in\mathcal Q_\theta^0\).  The barycenter equation at the origin gives
\begin{equation}\label{normalized-balance-qm}
        \int_{S^{2n-1}}g_j(\xi)\,d\sigma(\xi)=0.
\end{equation}
We use the standard compactness alternative for uniformly quasi-M\"obius maps
of compact metric spaces; see \cite[Corollary~2.7]{Haissinsky}.  After passing
to a subsequence, either the maps form a normalized equicontinuous family, or
there are \(a,b\in S^{2n-1}\) such that
\[
        g_j\longrightarrow b
\]
uniformly on compact subsets of \(S^{2n-1}\setminus\{a\}\).
The second alternative is impossible: since \(\sigma\) is non-atomic,
dominated convergence in \eqref{normalized-balance-qm} would give
\[
        0=\lim_j\int g_j\,d\sigma=b,
\]
contrary to \(|b|=1\).

Hence a subsequence converges uniformly to a nonconstant map \(g\).  Standard
normalized quasi-M\"obius compactness gives that the limit is a homeomorphism;
alternatively, the two-sided cross-ratio bounds obtained from
\eqref{general-cr-quasimobius} by permuting the four points rule out
identification of two distinct points, while surjectivity follows from compactness
and the surjectivity of the \(g_j\).  The quasi-M\"obius inequality and
\eqref{normalized-balance-qm} pass to the limit, so
\(g\in\mathcal Q_\theta^0\).  Thus \(\mathcal Q_\theta^0\) is compact.

If \(g_j\to g\) uniformly with all maps and the limit homeomorphisms of the
compact sphere, then \(g_j^{-1}\to g^{-1}\) uniformly.  Therefore inversion is
continuous on \(\mathcal Q_\theta^0\), and the inverse family is compact.
\end{proof}

\subsection{Standard topological facts}

We use the following standard facts. First, the classical theorem of turning
tangents asserts that a regular simple closed plane curve has rotation index
\(\pm1\), with the sign determined by its orientation; see
\cite[Sec.~5--7, Theorem~2, p.~396]{doCarmo1976}. Thus, for a positively
oriented smooth regular Jordan curve \(\Gamma\),
\begin{equation}\label{eq:turning-standard}
    \wind(\Gamma',0)=1.
\end{equation}
The same conclusion holds for regular \(C^1\) Jordan curves, for instance by
\(C^1\)-approximation by smooth embeddings and homotopy invariance of the
winding number.

Second, if \(u\in C(\overline{\D},\C)\) and \(y\notin u(\T)\), then the planar
Brouwer degree is given by the winding number of the boundary map,
\begin{equation}\label{eq:degree-winding}
    \deg(u,\D,y)=\wind(u|_{\T},y),
\end{equation}
and, when \(y\) is a regular value, the degree is the sum of the local signs
of the Jacobian over the preimages of \(y\); see
\cite[Chapter~1]{Deimling1985}.

\begin{lemma}[Joint continuity]
\label{lemma-joint-continuity-barycentric}
If \(f_j,f:S^{2n-1}\to S^{2n-1}\) are homeomorphisms with
\(f_j\to f\) uniformly and \(z_j\to z\in\B^n\), then
\[
        E_B(f_j)(z_j)\longrightarrow E_B(f)(z).
\]
\end{lemma}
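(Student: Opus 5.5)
The argument mirrors the proof of Theorem~\ref{interior-continuity}, with the measures now varying in both the map and the base point. Put
\[
        \mu_j=(f_j)_*\sigma_{z_j},\qquad \mu=f_*\sigma_z,\qquad w_j=E_B(f_j)(z_j),\qquad w=E_B(f)(z).
\]
The plan has three steps.

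\textbf{Weak convergence.} First I show \(\mu_j\rightharpoonup\mu\). Fix \(\varphi\in C(S^{2n-1})\). Then
\[
        \int\varphi\,d\mu_j=\int\varphi(f_j(\xi))P(z_j,\xi)\,d\sigma(\xi).
\]
Since \(\varphi\) is uniformly continuous, \(\varphi\circ f_j\to\varphi\circ f\) uniformly. Since \(z_j\to z\in\B^n\), \(P(z_j,\cdot)\to P(z,\cdot)\) uniformly, and these kernels are bounded uniformly in \(j\). Hence the integrals converge to \(\int\varphi\,d\mu\).

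\textbf{Uniform properness (main obstacle).} Next I show the \(w_j\) stay in a compact subset of \(\B^n\). The properness argument of Proposition~\ref{existence-uniqueness-barycenter} was stated for a single measure, so I need a version that is uniform along the sequence.

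The key input is that the masses \(\mu_j\) give to small sets are uniformly small. The densities \(P(z_j,\cdot)\) are bounded by some \(C\) independent of \(j\), so \(\sigma_{z_j}\le C\sigma\). Suppose a subsequence had \(w_j\to\eta\in S^{2n-1}\). Let \(U\) be a small ball around \(\eta\). For large \(j\), uniform convergence gives \(f_j^{-1}(U)\subset f^{-1}(U')\) for a slightly larger ball \(U'\). As \(f\) is a homeomorphism, \(\sigma(f^{-1}(U'))\to0\) as \(U'\) shrinks to \(\eta\). Hence \(\mu_j(U)\le C\sigma(f^{-1}(U'))<1/4\) for all large \(j\), once \(U\) is small enough.

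I then argue with the scaled barycenter equation \eqref{boundary-barycenter-scaled}, exactly as in Theorem~\ref{boundary-extension}. Its integrand has modulus at most \(2\) everywhere. Off \(U\) it tends to \(0\) uniformly, because \(|1-\langle\xi,w_j\rangle|\) stays bounded below there while \(1-|w_j|^2\to0\). Therefore
\[
        |w_j|\le 2\mu_j(U)+o(1)\le\tfrac12+o(1),
\]
which contradicts \(|w_j|\to1\). Working with the Euler equation rather than the potential avoids having to control the additive constants in the Busemann potentials.

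\textbf{Identification of the limit.} Now pass to a subsequence with \(w_j\to w_\infty\in\B^n\). The coordinate barycenter equation \eqref{coordinate-barycenter-equation} holds for \(\mu_j\) at \(w_j\). Its integrand \(\xi/(1-\langle\xi,w\rangle)\) is jointly continuous on \(S^{2n-1}\times K\) for compact \(K\subset\B^n\), so its integrals against \(\mu_j\) converge uniformly in \(w\in K\). Passing to the limit gives the same equation for \(\mu\) at \(w_\infty\).

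This equation says \(w_\infty\) is a critical point of the convex potential \(\mathcal B_\mu\), hence a minimizer. By the uniqueness in Proposition~\ref{existence-uniqueness-barycenter}, \(w_\infty=E_B(f)(z)\). Since every subsequence has a further subsequence converging to this same point, the whole sequence converges.
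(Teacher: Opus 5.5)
Your proof is correct. Its skeleton matches the paper's: weak convergence, confinement of the \(w_j\) to a compact subset of \(\B^n\), and identification of cluster points by uniqueness. The two middle steps, however, are carried out differently.

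\textbf{Confinement.} The paper gets \(\limsup_j\mu_j(K)<a<1/2\) for a closed neighborhood \(K\) of the would-be boundary limit from the Portmanteau theorem and non-atomicity of \(\mu\). It then shows that the potentials blow up, \(\mathcal B_{\mu_j}(w_j)\ge(1-2a)|\log(1-r_j)|-C'\), contradicting \(\mathcal B_{\mu_j}(w_j)\le\mathcal B_{\mu_j}(0)=0\). You obtain the small-mass bound from the density domination \(\sigma_{z_j}\le C\sigma\) and the inclusion \(f_j^{-1}(U)\subset f^{-1}(U')\). You then reuse the scaled Euler equation \eqref{boundary-barycenter-scaled} from the boundary-extension proof. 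This gives the cleaner linear bound \(|w_j|\le 2\mu_j(U)+o(1)\) with no logarithmic asymptotics.

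\textbf{Identification.} The paper proves \(\mathcal B_{\mu_j}\to\mathcal B_\mu\) locally uniformly, via a compactness argument in \(C(S^{2n-1})\), and passes the minimizing inequality to the limit. It therefore uses only the minimizing property and uniqueness. You pass the first-order condition to the limit. This is analytically lighter: you only need the integrands to converge uniformly in \(\xi\) along \(w_j\to w_\infty\), not uniformly over \(K\). The cost is one more appeal to geodesic convexity, namely that a critical point of \(\mathcal B_\mu\) is a global minimizer.

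\textbf{What each route buys.} The Portmanteau route is more robust, since it needs only weak convergence and a non-atomic limit measure. Yours exploits the explicit structure of the measures \((f_j)_*\sigma_{z_j}\).

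One small correction to your rationale: working with the Euler equation is not needed to avoid additive constants. With the normalization \(\beta_\xi(0)=0\), the paper's comparison with \(\mathcal B_{\mu_j}(0)=0\) involves no constants at all.
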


\begin{proof}
Set
\[
        \mu_j=(f_j)_*\sigma_{z_j},
        \qquad
        \mu=f_*\sigma_z.
\]
We first claim that
\[
        \mu_j\rightharpoonup\mu.
\]
Indeed, for every \(\varphi\in C(S^{2n-1})\),
\[
\int_{S^{2n-1}}\varphi\,d\mu_j
=
\int_{S^{2n-1}}
\varphi(f_j(\xi))P(z_j,\xi)\,d\sigma(\xi).
\]
Since \(z_j\to z\in\B^n\), the points \(z_j\) eventually lie in a fixed
compact subset of \(\B^n\), and hence
\[
        P(z_j,\cdot)\longrightarrow P(z,\cdot)
\]
uniformly on \(S^{2n-1}\). Moreover, \(f_j\to f\) uniformly, so the uniform
continuity of \(\varphi\) gives
\[
        \varphi\circ f_j\longrightarrow\varphi\circ f
\]
uniformly. It follows that
\[
        \int\varphi\,d\mu_j\longrightarrow\int\varphi\,d\mu,
\]
and therefore \(\mu_j\rightharpoonup\mu\).

Let
\[
        w_j=\barB(\mu_j)=E_B(f_j)(z_j).
\]
We claim that \((w_j)\) stays in a compact subset of \(\B^n\). Suppose
otherwise. Passing to a subsequence, we may write
\[
        w_j=r_j\zeta_j,
        \qquad
        r_j\to1,
        \qquad
        \zeta_j\to\zeta\in S^{2n-1}.
\]
The measure \(\mu=f_*\sigma_z\) is non-atomic. Hence
\(\mu(\{\zeta\})=0\), and we may choose a closed neighborhood \(K\) of
\(\zeta\) and a number \(a<1/2\) such that
\[
        \mu(K)<a.
\]
Since \(K\) is closed, the Portmanteau theorem gives
\[
        \limsup_{j\to\infty}\mu_j(K)\le \mu(K)<a,
\]
so, for all sufficiently large \(j\),
\[
        m_j:=\mu_j(K)\le a.
\]

Recall that
\[
        \beta_\xi(w)
        =
        \log\frac{|1-\langle w,\xi\rangle|^2}{1-|w|^2}.
\]
For every \(\xi\in S^{2n-1}\),
\[
        |1-r_j\langle\zeta_j,\xi\rangle|
        \ge 1-r_j,
\]
and therefore
\begin{equation}\label{eq:busemann-global-lower}
        \beta_\xi(w_j)
        \ge
        \log\frac{1-r_j}{1+r_j}.
\end{equation}

On the other hand, since \(K\) is a neighborhood of \(\zeta\), the compact
set \(S^{2n-1}\setminus K\) does not contain \(\zeta\). Thus
\[
        c_0:=
        \min_{\xi\in S^{2n-1}\setminus K}
        |1-\langle\zeta,\xi\rangle|>0.
\]
Because \(r_j\zeta_j\to\zeta\), for all sufficiently large \(j\),
\[
        |1-r_j\langle\zeta_j,\xi\rangle|
        \ge \frac{c_0}{2},
        \qquad
        \xi\in S^{2n-1}\setminus K.
\]
Consequently,
\begin{equation}\label{eq:busemann-away-lower}
        \beta_\xi(w_j)
        \ge
        -\log(1-r_j^2)-C,
        \qquad
        \xi\in S^{2n-1}\setminus K,
\end{equation}
for some constant \(C\) independent of \(j\).

Using \eqref{eq:busemann-global-lower} on \(K\) and
\eqref{eq:busemann-away-lower} on its complement, we obtain
\[
\begin{aligned}
        \mathcal B_{\mu_j}(w_j)
        &\ge
        m_j\log\frac{1-r_j}{1+r_j}
        +(1-m_j)\bigl[-\log(1-r_j^2)-C\bigr] \\
        &=
        (1-2m_j)|\log(1-r_j)|
        -\log(1+r_j)-(1-m_j)C.
\end{aligned}
\]
Since \(m_j\le a<1/2\) and \(1\le 1+r_j\le2\), it follows that
\[
        \mathcal B_{\mu_j}(w_j)
        \ge
        (1-2a)|\log(1-r_j)|-C'
        \longrightarrow+\infty
\]
for some constant \(C'\). This contradicts the minimizing property of
\(w_j\), since
\[
        \mathcal B_{\mu_j}(w_j)
        \le
        \mathcal B_{\mu_j}(0)
        =0.
\]
Hence \((w_j)\) is relatively compact in \(\B^n\).

We next show that
\[
        \mathcal B_{\mu_j}\longrightarrow\mathcal B_\mu
\]
locally uniformly on \(\B^n\). Let \(L\Subset\B^n\). Since
\[
        (\xi,w)\longmapsto\beta_\xi(w)
\]
is continuous on the compact set \(S^{2n-1}\times L\), the map
\[
        L\longrightarrow C(S^{2n-1}),
        \qquad
        w\longmapsto\beta_{\cdot}(w),
\]
is continuous with respect to the uniform norm. Thus
\[
        \mathcal K_L
        :=
        \{\beta_{\cdot}(w):w\in L\}
\]
is compact in \(C(S^{2n-1})\).

Define
\[
        T_j(\varphi)
        =
        \int_{S^{2n-1}}\varphi\,d(\mu_j-\mu).
\]
Then \(\|T_j\|\le2\), while \(T_j(\varphi)\to0\) for every
\(\varphi\in C(S^{2n-1})\). We claim that this convergence is uniform on
\(\mathcal K_L\). Indeed, given \(\varepsilon>0\), choose
\(\varphi_1,\ldots,\varphi_N\in\mathcal K_L\) such that every
\(\varphi\in\mathcal K_L\) satisfies
\[
        \|\varphi-\varphi_k\|_\infty<\varepsilon
\]
for some \(k\). Then
\[
        |T_j(\varphi)|
        \le
        |T_j(\varphi_k)|+2\varepsilon.
\]
Taking the supremum over \(\varphi\in\mathcal K_L\) and then letting
\(j\to\infty\) gives
\[
        \limsup_{j\to\infty}
        \sup_{\varphi\in\mathcal K_L}|T_j(\varphi)|
        \le2\varepsilon.
\]
Since \(\varepsilon>0\) is arbitrary,
\[
        \sup_{w\in L}
        |\mathcal B_{\mu_j}(w)-\mathcal B_\mu(w)|
        \longrightarrow0.
\]

Now let \(w_j\to w_\infty\in\B^n\) along a subsequence. For every fixed
\(v\in\B^n\), the minimizing property of \(w_j\) gives
\[
        \mathcal B_{\mu_j}(w_j)
        \le
        \mathcal B_{\mu_j}(v).
\]
Choose \(L\Subset\B^n\) containing \(v\), \(w_\infty\), and \(w_j\) for all
sufficiently large \(j\). By the local uniform convergence just proved,
together with the continuity of \(\mathcal B_\mu\),
\[
        \mathcal B_\mu(w_\infty)
        \le
        \mathcal B_\mu(v).
\]
Since \(v\in\B^n\) was arbitrary, \(w_\infty\) minimizes
\(\mathcal B_\mu\).

Finally, \(\mu=f_*\sigma_z\) is non-atomic and has full support, because
\(\sigma_z\) has these properties and \(f\) is a homeomorphism. Hence
Proposition~\ref{existence-uniqueness-barycenter} implies that its
barycenter is unique. Therefore
\[
        w_\infty=\barB(\mu)=E_B(f)(z).
\]
Thus every cluster point of \((w_j)\) equals \(E_B(f)(z)\). Since
\((w_j)\) is relatively compact in \(\B^n\), it follows that
\[
        E_B(f_j)(z_j)=w_j\longrightarrow E_B(f)(z).
\]
\end{proof}

\begin{proposition}[Uniform control on bounded normalized sets]
\label{proposition-uniform-bounded-displacement}
For every distortion function \(\theta\) and every \(R>0\) there exists
\(C=C(\theta,n,R)<\infty\) such that
\[
        d_B(0,E_B(g)(z))\le C
\]
whenever \(g\in\mathcal Q_\theta^0\) and \(d_B(0,z)\le R\).
\end{proposition}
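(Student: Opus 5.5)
The plan is a compactness argument by contradiction, combining Lemma~\ref{lemma-normalized-qm-compactness} with the joint continuity of Lemma~\ref{lemma-joint-continuity-barycentric}.

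Suppose the statement fails for some \(\theta\) and \(R\). Then there are \(g_j\in\mathcal Q_\theta^0\) and points \(z_j\) with \(d_B(0,z_j)\le R\) such that \(d_B(0,E_B(g_j)(z_j))\to\infty\). The closed Bergman ball \(\{z: d_B(0,z)\le R\}\) is a compact subset of \(\B^n\), being a closed Euclidean ball of radius \(<1\). So after passing to a subsequence we may assume \(z_j\to z\in\B^n\).

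By Lemma~\ref{lemma-normalized-qm-compactness}, a further subsequence has \(g_j\to g\) uniformly, with \(g\in\mathcal Q_\theta^0\). In particular the limit \(g\) is a homeomorphism of \(S^{2n-1}\).

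Lemma~\ref{lemma-joint-continuity-barycentric} then gives \(E_B(g_j)(z_j)\to E_B(g)(z)\), which is a point of \(\B^n\). Hence \(d_B(0,E_B(g_j)(z_j))\to d_B(0,E_B(g)(z))<\infty\), because \(d_B(0,\cdot)\) is continuous on \(\B^n\) in the Euclidean topology. This contradicts the divergence assumed above. The constant \(C\) depends only on \(\theta\), \(R\) and \(n\), since the class \(\mathcal Q_\theta^0\) and the ball are fixed by these data.

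The only delicate point is that the limit \(g\) must be a homeomorphism, so that the joint continuity lemma, which is stated for homeomorphisms, applies. This is exactly the content of the compactness lemma: the balance condition \(\int g_j\,d\sigma=0\) rules out degeneration to a constant map.

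The rest of the argument is routine, namely the identification of Bergman balls with compact Euclidean balls and the extraction of subsequences.
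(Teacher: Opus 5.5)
Your proof is correct and is essentially the paper's argument. The paper uses the same two ingredients, Lemma~\ref{lemma-normalized-qm-compactness} and Lemma~\ref{lemma-joint-continuity-barycentric}, to conclude that \((g,z)\mapsto E_B(g)(z)\) is continuous on the compact set \(\mathcal Q_\theta^0\times K_R\) and therefore has compact image in \(\B^n\); you run the same reasoning as a sequential contradiction argument.
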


\begin{proof}
The closed Bergman ball \(K_R=\{z:d_B(0,z)\le R\}\) is compact.  By
Lemma~\ref{lemma-normalized-qm-compactness}, \(\mathcal Q_\theta^0\) is compact,
and by Lemma~\ref{lemma-joint-continuity-barycentric} the map
\[
        (g,z)\longmapsto E_B(g)(z)
\]
is continuous on \(\mathcal Q_\theta^0\times K_R\).  Its image is therefore a
compact subset of \(\B^n\), on which \(d_B(0,\cdot)\) is bounded.
\end{proof}

\begin{proof}[Proof of Theorem~\ref{theorem-general-quasi-isometry}]

The \(\eta\)-quasisymmetry of \(f\) gives a
\(\theta\)-quasi-M\"obius distortion depending only on \(\eta\).  Fix
\(x\in\B^n\), choose \(B_x,A_x\in\Aut(\B^n)\) with
\[
        B_x(0)=x,
        \qquad A_x(F(x))=0,
\]
and set
\[
        g_x=A_x\circ f\circ B_x|_{S^{2n-1}}.
\]
Naturality gives
\[
        E_B(g_x)=A_x\circ F\circ B_x,
        \qquad E_B(g_x)(0)=0.
\]
Since ball automorphisms preserve the CR cross-ratio,
\(g_x\in\mathcal Q_\theta^0\) for every \(x\).

Apply Proposition~\ref{proposition-uniform-bounded-displacement} with \(R=1\).
There is \(C_+<\infty\), depending only on \(n\) and \(\eta\), such that if
\(d_B(x,y)\le1\), then, with \(u=B_x^{-1}(y)\),
\[
\begin{aligned}
        d_B(F(x),F(y))
        &=d_B(0,E_B(g_x)(u))
        \le C_+.
\end{aligned}
\]
Dividing a Bergman geodesic from \(x\) to \(y\) into subsegments of length at
most one yields
\begin{equation}\label{coarse-upper-F}
        d_B(F(x),F(y))\le C_+d_B(x,y)+C_+.
\end{equation}

Put \(G=E_B(f^{-1})\).  Since
\[
        g_x^{-1}=B_x^{-1}\circ f^{-1}\circ A_x^{-1},
\]
naturality gives
\[
        E_B(g_x^{-1})(0)=B_x^{-1}(G(F(x))).
\]
The inverse family \((\mathcal Q_\theta^0)^{-1}\) is compact by
Lemma~\ref{lemma-normalized-qm-compactness}; hence joint continuity gives a
constant \(D_1<\infty\) such that
\begin{equation}\label{GF-close-id}
        d_B(G(F(x)),x)\le D_1
\end{equation}
for every \(x\).  Applying the same argument to \(f^{-1}\) gives
\begin{equation}\label{FG-close-id}
        d_B(F(G(y)),y)\le D_2
\end{equation}
for every \(y\), with \(D_2\) depending only on \(n\) and \(\eta\).  Thus \(F\)
is coarsely onto and \(G\) is a coarse inverse.

Finally, the already proved upper estimate applied to \(G\) gives constants
\(C_-\) depending only on \(n\) and \(\eta\) such that
\[
        d_B(G(p),G(q))\le C_-d_B(p,q)+C_-.
\]
Therefore
\[
\begin{aligned}
        d_B(x,y)
        &\le d_B(x,G(F(x)))
        +d_B(G(F(x)),G(F(y)))
        +d_B(G(F(y)),y)\\
        &\le 2D_1+C_-d_B(F(x),F(y))+C_-.
\end{aligned}
\]
Together with \eqref{coarse-upper-F}, this gives
\eqref{general-quasi-isometry-estimate} after enlarging the constants.  Taking
\(D=\max\{D_1,D_2\}\) gives \eqref{coarse-inverse-estimates}.
\end{proof}

\begin{remark}[Large scale versus local behavior]
\label{remark-qi-versus-local-collapse}
Theorem~\ref{theorem-general-quasi-isometry} is a large-scale statement.  It
does not imply injectivity, local non-degeneracy, or quasiconformality in the
differential sense.  The counterexample in
Section~\ref{section-cr-qs-counterexample} shows that these stronger properties
can fail even for smooth contact boundary data.
\end{remark}

\subsection{Proof of the Tukia-type almost-isometry theorem}

We shall use the same quasi-M\"obius compactness alternative as above
\cite[Corollary~2.7]{Haissinsky}.  In the small-distortion regime it converts
automorphic normalization into uniform closeness to unitary normal forms.

\begin{lemma}[Small-distortion normalization]
\label{lemma-small-distortion-normalization}
Let \(\varepsilon_j\to0\), and let
\(g_j:S^{2n-1}\to S^{2n-1}\) be CR-orientation-preserving
\(\varepsilon_j\)-almost CR-M\"obius homeomorphisms such that
\[
        E_B(g_j)(0)=0.
\]
Then there exist unitary maps \(U_j\in U(n)\) such that
\[
        \|U_j^{-1}\circ g_j-\Id\|_{L^\infty(S^{2n-1})}\to0.
\]
Equivalently, every sufficiently small-distortion automorphically normalized
positive boundary map is uniformly close to a unitary map.
\end{lemma}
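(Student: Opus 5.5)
We argue by contradiction combined with compactness. Suppose the conclusion fails. Then, after passing to a subsequence, there is \(\delta>0\) such that
\[
        \inf_{U\in U(n)}\|U^{-1}\circ g_j-\Id\|_{L^\infty}\ge\delta
\]
for all \(j\). We aim to extract a uniformly convergent subsequence \(g_j\to g\) and to show that \(g\) is unitary. Taking \(U_j=g\) for large \(j\) then gives the contradiction.

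First we place the \(g_j\) in a fixed compact family. Each \(g_j\) is \(\varepsilon_j\)-almost CR-M\"obius with \(\varepsilon_j\le\varepsilon_*\), so it is \(\theta\)-CR-quasi-M\"obius with the single distortion function \(\theta(t)=e^{\varepsilon_*}t\). Since \(E_B(g_j)(0)=0\), every \(g_j\) lies in \(\mathcal Q_\theta^0\). By Lemma~\ref{lemma-normalized-qm-compactness}, a subsequence converges uniformly to a homeomorphism \(g\in\mathcal Q_\theta^0\). In particular the balance condition passes to the limit:
\[
        \int_{S^{2n-1}} g\,d\sigma=0 .
\]
Next, because the cross-ratio inequality \eqref{almost-cr-mobius} is pointwise in the quadruple and the maps converge uniformly, letting \(\varepsilon_j\to0\) shows that \(g\) preserves \([\cdot,\cdot,\cdot,\cdot]_{CR}\) exactly.

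The main step is to show that a homeomorphism of \(S^{2n-1}\) preserving the CR metric cross-ratio is CR-M\"obius, meaning it lies in \(\operatorname{Mob}_{CR}^+\) or in the anti-CR component. We plan to argue as follows.
\begin{enumerate}
\item Compose with a boundary automorphism so that \(g\) fixes a point \(p\).
\item Use the Cayley transform to pass to the Heisenberg group with the Kor\'anyi--Cygan gauge.
\item Use cross-ratio quadruples of the form \((\xi,\eta,\alpha,p)\). Invariance then forces \(g\) to be a metric similarity of the Kor\'anyi gauge.
\item Invoke the classical description of similarities of the Heisenberg group: they are compositions of dilations, left translations, unitary rotations and, possibly, the conjugation \(C\). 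This yields the classification.
\end{enumerate}
This is the step where most of the work lies. If a self-contained proof is too long, we would instead cite the known characterization of cross-ratio preserving maps of the boundary of complex hyperbolic space (equivalently, of isometries of the Kor\'anyi metric).

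Given the classification, the paper's standing convention finishes the argument. A uniform limit of positive maps whose distortion tends to \(0\) lies in the positive component, so \(g=A|_{S^{2n-1}}\) with \(A\in\Aut(\B^n)\). By Theorem~\ref{naturality-theorem}, \(E_B(g)=A\). Joint continuity (Lemma~\ref{lemma-joint-continuity-barycentric}) then gives
\[
        A(0)=E_B(g)(0)=\lim_j E_B(g_j)(0)=0 .
\]
An automorphism fixing the origin is unitary, so \(g=U\in U(n)\). Then
\[
        \|U^{-1}\circ g_j-\Id\|_{L^\infty}\le\|g_j-U\|_{L^\infty}\to0,
\]
since \(U\) is an isometry. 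This contradicts the choice of \(\delta\). Since every subsequence thus has a further subsequence along which such unitaries exist, the full statement follows by a standard diagonal argument.
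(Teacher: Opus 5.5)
Your proof is correct and follows essentially the same route as the paper. You exclude collapse via uniform quasi-M\"obius compactness and the balance condition \(\int g_j\,d\sigma=0\), which you package by invoking Lemma~\ref{lemma-normalized-qm-compactness} with \(\theta(t)=e^{\varepsilon_*}t\), and that is legitimate. You then pass exact cross-ratio invariance to the limit, apply the Liouville-type classification together with the positive-component convention, and use naturality plus joint continuity to force the limit automorphism to fix \(0\) and hence be unitary. Your Heisenberg-similarity sketch is just an optional route to the classification that the paper cites from Pansu and Kor\'anyi--Reimann. The closing ``diagonal argument'' is unnecessary, since the contradiction argument already covers the full sequence.
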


\begin{proof}
Suppose that the conclusion fails.  We use the standard compactness alternative
for uniformly quasi-M\"obius self-homeomorphisms of compact metric spaces
\cite[Corollary~2.7]{Haissinsky}: if the quasi-M\"obius
distortion tends to one, then a subsequence either converges uniformly to a
cross-ratio preserving homeomorphism, or collapses uniformly on compact subsets
of the complement of at most one point to a constant boundary point.  The
second alternative is incompatible with the barycentric normalization.  Indeed,
\(E_B(g_j)(0)=0\) implies, by Proposition~\ref{first-harmonic-formula},
\[
        \int_{S^{2n-1}} g_j(\xi)\,d\sigma(\xi)=0.
\]
If a collapsing alternative occurred, then, since \(\sigma\) is non-atomic
and the exceptional point has \(\sigma\)-measure zero, dominated convergence
would force these integrals to converge to a point of \(S^{2n-1}\), a
contradiction.  Hence, after passing to a subsequence, \(g_j\) converges
uniformly to a homeomorphism \(g_\infty\).

Since \(\varepsilon_j\to0\), the limit preserves the CR metric cross-ratio.
The metric cross-ratio alone would allow both the holomorphic and the
anti-holomorphic CR-M\"obius groups.  The CR-orientation-preserving assumption
excludes the anti-CR component.  Hence, by the Liouville theorem for the CR
sphere \cite{Pansu,KoranyiReimann}, there exists \(G\in\Aut(\B^n)\) such that
\[
        g_\infty=G|_{S^{2n-1}}.
\]

Continuity of the barycentric extension under uniform convergence of boundary
maps gives
\[
        E_B(g_\infty)(0)=0.
\]
By automorphism naturality, \(E_B(g_\infty)=G\). Hence \(G(0)=0\), and
therefore \(G\in U(n)\). Thus \(G^{-1}\circ g_j\to\Id\) uniformly,
contradicting the assumed failure of the conclusion.
\end{proof}

\subsubsection{Perturbative control at the normalized origin}
\label{section-perturbative-distortion}

The next estimate is the differential core of the small-distortion theorem.  It
strengthens the preceding non-collapse estimate by controlling the full
distortion of the normalized differential.

\begin{lemma}[Perturbative control of \(\mathcal B_g\)]\label{lemma-perturbative-Bg}
Let \(g:S^{2n-1}\to S^{2n-1}\) satisfy
\[
        \|g-\Id\|_{L^\infty(S^{2n-1})}\le\delta.
\]
Then
\[
        \|\mathcal B_g-I\|\le \sqrt{2n}\,\delta.
\]
Consequently,
\[
        \|\mathcal B_g\|\le1+\sqrt{2n}\,\delta,
        \qquad
        \ell(\mathcal B_g)\ge1-\sqrt{2n}\,\delta.
\]
\end{lemma}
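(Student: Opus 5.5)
We write \(\mathcal B_g-I=\mathcal B_g-\mathcal B_{\Id}\) and use linearity of \(\mathcal B\) in the boundary map. By the identity-case remark, \(\mathcal B_{\Id}=I\). Hence, with \(h=g-\Id\),
\[
        (\mathcal B_g-I)v
        =2n\int_{S^{2n-1}}h(\xi)\,\Real\langle v,\xi\rangle\,d\sigma(\xi),
        \qquad |h(\xi)|\le\delta .
\]
The problem is now a bound on a weighted integral of \(h\), and the only tool needed is a sharp \(L^2\) estimate of the weight \(\Real\langle v,\xi\rangle\).

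Fix a unit vector \(v\). By Minkowski's integral inequality and then Cauchy--Schwarz,
\[
        |(\mathcal B_g-I)v|
        \le 2n\,\delta\int_{S^{2n-1}}|\Real\langle v,\xi\rangle|\,d\sigma(\xi)
        \le 2n\,\delta\Bigl(\int_{S^{2n-1}}(\Real\langle v,\xi\rangle)^2\,d\sigma(\xi)\Bigr)^{1/2}.
\]
Identify \(\C^n\) with \(\R^{2n}\). Then \(\Real\langle v,\xi\rangle\) is the Euclidean inner product of \(\xi\) with the real unit vector \(v\). By rotation invariance of \(\sigma\), its second moment equals the average of \(x_1^2\) over \(S^{2n-1}\), which is \(1/(2n)\). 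This gives
\[
        |(\mathcal B_g-I)v|\le 2n\,\delta\,(2n)^{-1/2}=\sqrt{2n}\,\delta,
\]
and taking the supremum over \(|v|=1\) yields \(\|\mathcal B_g-I\|\le\sqrt{2n}\,\delta\).

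The consequences follow from the triangle inequality. For \(|v|=1\) we have \(|\mathcal B_gv|\le|v|+|(\mathcal B_g-I)v|\le1+\sqrt{2n}\,\delta\) and \(|\mathcal B_gv|\ge|v|-|(\mathcal B_g-I)v|\ge1-\sqrt{2n}\,\delta\).

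No step is a real obstacle. The only point needing care is the normalization of the second moment: the constant \(1/(2n)\) must be computed on the real sphere \(S^{2n-1}\subset\R^{2n}\), not with a complex normalization. This value is also consistent with the identity-case formula \(\mathcal B_{\Id}=I\), which itself amounts to \(2n\int\xi\,\Real\langle v,\xi\rangle\,d\sigma=v\).
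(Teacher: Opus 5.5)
Your proposal is correct and follows essentially the same route as the paper: subtract $\mathcal B_{\Id}=I$, bound the resulting integral by $2n\delta\int|\Real\langle v,\xi\rangle|\,d\sigma$, apply Cauchy--Schwarz with the second moment $1/(2n)$, and finish with the triangle inequality. Your explicit remark that $\Real\langle v,\xi\rangle$ is the real inner product on $\R^{2n}$ is exactly the justification the paper summarizes as ``unitary invariance.''
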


\begin{proof}
For the identity map, unitary invariance gives
\[
        \mathcal B_{\Id}v
        =2n\int_{S^{2n-1}}\xi\,\Real\langle v,\xi\rangle\,d\sigma(\xi)
        =v.
\]
Thus \(\mathcal B_{\Id}=I\).  If \(\|v\|=1\), then
\[
\begin{aligned}
        \| (\mathcal B_g-I)v\|
        &\le
        2n\delta\int_{S^{2n-1}}
        |\Real\langle v,\xi\rangle|\,d\sigma(\xi)  \\
        &\le
        2n\delta
        \left(
        \int_{S^{2n-1}}(\Real\langle v,\xi\rangle)^2\,d\sigma(\xi)
        \right)^{1/2}.
\end{aligned}
\]
By unitary invariance,
\[
        \int_{S^{2n-1}}(\Real\langle v,\xi\rangle)^2\,d\sigma(\xi)
        =\frac1{2n}.
\]
Therefore \(\|(\mathcal B_g-I)v\|\le\sqrt{2n}\,\delta\).  Taking the supremum
over \(\|v\|=1\) gives the operator norm estimate.  The bounds for
\(\|\mathcal B_g\|\) and \(\ell(\mathcal B_g)\) follow immediately.
\end{proof}

\begin{lemma}[Perturbative control of \(\mathcal A_g\)]\label{lemma-Ag-perturbative}
Let \(g:S^{2n-1}\to S^{2n-1}\) satisfy
\[
        \|g-\Id\|_{L^\infty(S^{2n-1})}\le\delta.
\]
Then
\[
        \|\mathcal A_g-I\|\le2\delta.
\]
Consequently, if \(2\delta<1\), then \(\mathcal A_g\) is invertible and
\[
        \|\mathcal A_g^{-1}\|\le\frac1{1-2\delta},
        \qquad
        \|\mathcal A_g\|\le1+2\delta.
\]
\end{lemma}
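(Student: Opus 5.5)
We need $\|\mathcal A_g - I\|\le 2\delta$, where
\[
\mathcal A_g u - u = -\int g(\xi)\langle g(\xi),u\rangle\,d\sigma(\xi).
\]
The plan is to compare this with the identity case, where $\int \xi\langle \xi,u\rangle\,d\sigma(\xi)=0$ (recorded in the remark on the identity case). Subtracting that vanishing term, write
\[
(\mathcal A_g-I)u=-\int\bigl[g(\xi)\langle g(\xi),u\rangle-\xi\langle\xi,u\rangle\bigr]d\sigma(\xi).
\]
Then split the bracket telescopically as
\[
(g(\xi)-\xi)\langle g(\xi),u\rangle+\xi\langle g(\xi)-\xi,u\rangle .
\]

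Fix $|u|=1$ and use $|g(\xi)|=|\xi|=1$ together with Cauchy--Schwarz. The first term then has modulus at most $|g(\xi)-\xi|\,|u|\le\delta$, and the second likewise at most $\delta$. Integrating against the probability measure $\sigma$ gives $|(\mathcal A_g-I)u|\le 2\delta$, and taking the supremum over unit $u$ yields the operator-norm bound. Since $\mathcal A_g$ is only real-linear, we use the real operator norm on $\C^n\cong\R^{2n}$; the estimate is pointwise in $u$, so nothing changes.

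For the consequences, we use $\|\mathcal A_g\|\le\|I\|+\|\mathcal A_g-I\|\le 1+2\delta$. When $2\delta<1$, a Neumann series $\mathcal A_g^{-1}=\sum_k (I-\mathcal A_g)^k$ converges in the real operator norm. It gives invertibility and $\|\mathcal A_g^{-1}\|\le\sum_k(2\delta)^k=1/(1-2\delta)$. Equivalently, $|\mathcal A_gu|\ge(1-2\delta)|u|$.

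There is no real obstacle here. The only points needing care are remembering that the identity-case integral vanishes, so the subtraction is legitimate, and that $g$ takes values on the unit sphere, so $|\langle g(\xi),u\rangle|\le1$.
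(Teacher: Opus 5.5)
Your proposal is correct and matches the paper's proof essentially step for step. You subtract the vanishing identity-case integral \(\int\xi\langle\xi,u\rangle\,d\sigma=0\) and split \(g\langle g,u\rangle-\xi\langle\xi,u\rangle\) telescopically into two terms, each bounded by \(\delta\), using \(|g|=|\xi|=1\); the invertibility bounds then follow from the Neumann series.
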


\begin{proof}
For the identity map, \(\mathcal A_{\Id}=I\).  Since
\[
        \int_{S^{2n-1}}\xi\langle \xi,u\rangle\,d\sigma(\xi)=0,
\]
we have
\[
\begin{aligned}
        (\mathcal A_g-I)u
        &=-\int_{S^{2n-1}}
        \bigl(g(\xi)\langle g(\xi),u\rangle
        -\xi\langle \xi,u\rangle\bigr)\,d\sigma(\xi).
\end{aligned}
\]
For \(\|u\|=1\),
\[
\begin{aligned}
        \|g\langle g,u\rangle-\xi\langle \xi,u\rangle\|
        &\le
        \|g-\xi\|\,|\langle g,u\rangle|
        +|\langle g-\xi,u\rangle|  \\
        &\le 2\delta.
\end{aligned}
\]
After integration this gives \(\|\mathcal A_g-I\|\le2\delta\).  The remaining
estimates follow from the Neumann lemma.
\end{proof}

\begin{proposition}[Perturbative distortion of the normalized differential]
\label{proposition-perturbative-distortion}
Let \(g:S^{2n-1}\to S^{2n-1}\) be a homeomorphism satisfying
\[
        E_B(g)(0)=0,
\]
and assume
\[
        \|g-\Id\|_{L^\infty(S^{2n-1})}\le\delta.
\]
Put \(s_n=\sqrt{2n}\).  If
\[
        2\delta<1,
        \qquad
        s_n\delta<1,
\]
then \(dE_B(g)_0\) is nonsingular and
\begin{equation}\label{perturbative-distortion-bound}
        K_B(E_B(g),0)
        \le
        \frac{(1+s_n\delta)(1+2\delta)}
        {(1-2\delta)(1-s_n\delta)}.
\end{equation}
In particular,
\[
        K_B(E_B(g),0)\le1+C(n)\delta
\]
for all sufficiently small \(\delta\).
\end{proposition}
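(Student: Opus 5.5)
Proposition~\ref{first-harmonic-formula} applies directly because \(E_B(g)(0)=0\), so \(dE_B(g)_0=\mathcal A_g^{-1}\mathcal B_g\), and \(K_B(E_B(g),0)\) is the ratio of the largest to the smallest Bergman singular value of this operator at the origin. The first step is to observe that at \(z=0\) the Bergman tensor \eqref{bergman-tensor} reduces to the identity \(\delta_{ij}\), and \(E_B(g)(0)=0\) puts the image point at the origin as well. Hence the Bergman singular values of \(dE_B(g)_0\) coincide with the Euclidean singular values of the real-linear map \(\mathcal A_g^{-1}\mathcal B_g\) on \(\C^n\simeq\R^{2n}\). The Bergman metric is normalized only up to a positive constant, but the same constant appears in source and target, so the ratio is unaffected.

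Next I would bound the two singular values separately, using submultiplicativity together with Lemmas~\ref{lemma-perturbative-Bg} and \ref{lemma-Ag-perturbative}. For the upper bound,
\[
        \|\mathcal A_g^{-1}\mathcal B_g\|
        \le\|\mathcal A_g^{-1}\|\,\|\mathcal B_g\|
        \le\frac{1+s_n\delta}{1-2\delta}.
\]
For the lower bound, I would write \(|\mathcal A_g^{-1}\mathcal B_gv|\ge|\mathcal B_gv|/\|\mathcal A_g\|\), which holds because \(|\mathcal B_gv|=|\mathcal A_g(\mathcal A_g^{-1}\mathcal B_gv)|\le\|\mathcal A_g\|\,|\mathcal A_g^{-1}\mathcal B_gv|\). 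This gives
\[
        \ell(\mathcal A_g^{-1}\mathcal B_g)\ge\frac{\ell(\mathcal B_g)}{\|\mathcal A_g\|}
        \ge\frac{1-s_n\delta}{1+2\delta}>0,
\]
where positivity uses the hypothesis \(s_n\delta<1\). So \(dE_B(g)_0\) is nonsingular, and dividing the upper bound by the lower bound yields exactly \eqref{perturbative-distortion-bound}.

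Finally, the right-hand side of \eqref{perturbative-distortion-bound} is a smooth function of \(\delta\) equal to \(1\) at \(\delta=0\). Expanding it, or simply taking \(\delta\le\min\{1/4,1/(2s_n)\}\) so that all denominators are at least \(1/2\), gives \(1+C(n)\delta\) with, for instance, \(C(n)=8(s_n+2)\) or a similar explicit constant. There is no real obstacle here. The only point needing care is the identification of Bergman and Euclidean singular values at the origin, and the remark that \(\mathcal A_g\) is invertible by Lemma~\ref{lemma-Ag-perturbative}, as well as by Proposition~\ref{first-harmonic-formula}.
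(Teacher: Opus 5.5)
Your proposal is correct and follows the paper's proof essentially verbatim: it uses the first-variation formula $dE_B(g)_0=\mathcal A_g^{-1}\mathcal B_g$, bounds $\|\mathcal A_g^{-1}\mathcal B_g\|\le\|\mathcal A_g^{-1}\|\,\|\mathcal B_g\|$ and $\ell(\mathcal A_g^{-1}\mathcal B_g)\ge\ell(\mathcal B_g)/\|\mathcal A_g\|$ via the two perturbative lemmas, and takes the quotient. Your extra remarks are correct details that the paper leaves implicit: the Bergman tensor is the identity at the origin, so Bergman and Euclidean singular values agree there, and for $\delta\le\min\{1/4,1/(2s_n)\}$ the explicit constant $C(n)=8(s_n+2)$ works.
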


\begin{proof}
By Lemma \ref{lemma-perturbative-Bg},
\[
        \|\mathcal B_g-I\|\le s_n\delta.
\]
Hence
\[
        \|\mathcal B_g\|\le1+s_n\delta,
        \qquad
        \ell(\mathcal B_g)\ge1-s_n\delta.
\]
By Lemma \ref{lemma-Ag-perturbative},
\[
        \|\mathcal A_g^{-1}\|\le\frac1{1-2\delta},
        \qquad
        \|\mathcal A_g\|\le1+2\delta.
\]
The normalized first-variation formula gives
\[
        dE_B(g)_0=\mathcal A_g^{-1}\mathcal B_g.
\]
Therefore
\[
        \|dE_B(g)_0\|
        \le
        \frac{1+s_n\delta}{1-2\delta},
\]
and
\[
        \ell(dE_B(g)_0)
        \ge
        \ell(\mathcal A_g^{-1})\ell(\mathcal B_g)
        =
        \frac{\ell(\mathcal B_g)}{\|\mathcal A_g\|}
        \ge
        \frac{1-s_n\delta}{1+2\delta}.
\]
Taking the quotient gives \eqref{perturbative-distortion-bound}.
\end{proof}

The same statement holds with the identity replaced by a unitary map.  Indeed,
if \(U\in U(n)\), then by automorphism naturality
\[
        E_B(U^{-1}\circ g)=U^{-1}\circ E_B(g),
\]
and the left composition by \(U^{-1}\) does not change Bergman singular values
or distortion.

\subsubsection{Completion of the proof}
\label{section-tukia-type-theorem}

We now prove the metric form of the small-distortion theorem.  The proof is the
same normalization principle as in Tukia's theorem: move the point under
consideration to the origin, normalize the image point to the origin, use
compactness to make the normalized boundary map close to a unitary map, and then apply the explicit first-variation estimate.

\begin{proof}[Proof of Theorem~\ref{theorem-tukia-type-almost-isometry}]

Let \(F=E_B(f)\).  Fix \(z_0\in\B^n\).  Choose automorphisms
\(B,A\in\Aut(\B^n)\) such that
\[
        B(0)=z_0,
        \qquad
        A(F(z_0))=0,
\]
and put
\[
        g=A\circ f\circ B|_{S^{2n-1}}.
\]
By naturality,
\[
        E_B(g)=A\circ F\circ B,
        \qquad
        E_B(g)(0)=0.
\]
Since \(A\) and \(B\) are Bergman isometries, the Bergman singular values of
\(dF_{z_0}\) are the same as those of \(dE_B(g)_0\).  Since the CR cross-ratio
is invariant under holomorphic automorphisms and the positive component is
preserved by holomorphic pre- and post-composition, \(g\) is again
CR-orientation-preserving and \(\varepsilon\)-almost CR-M\"obius.

Lemma \ref{lemma-small-distortion-normalization} has the following quantitative
consequence: for every \(\delta>0\) there exists
\(\varepsilon_1=\varepsilon_1(\delta,n)>0\) such that every normalized
CR-orientation-preserving \(\varepsilon\)-almost CR-M\"obius map
\(g\), \(0<\varepsilon<\varepsilon_1\), is within \(\delta\) in the uniform
norm of some unitary map.  Otherwise one could choose
\(\delta_0>0\), \(\varepsilon_j\downarrow0\), and normalized maps \(g_j\)
contradicting Lemma \ref{lemma-small-distortion-normalization}.
After composing \(g\) with such a unitary map, which does not change Bergman
singular values, we may therefore assume
\[
        \|g-\Id\|_{L^\infty(S^{2n-1})}\le \delta,
\]
where \(\delta>0\) can be made arbitrarily small by taking \(\varepsilon\)
sufficiently small.  Proposition
\ref{proposition-perturbative-distortion} and its proof give the two-sided
singular-value estimates
\[
        \|dE_B(g)_0\|
        \le
        \frac{1+\sqrt{2n}\,\delta}{1-2\delta}
        =:L_\delta,
\]
and
\[
        \ell(dE_B(g)_0)
        \ge
        \frac{1-\sqrt{2n}\,\delta}{1+2\delta}
        =:m_\delta.
\]
Thus, for every \(z\in\B^n\),
\begin{equation}\label{global-singular-value-bound}
        m_\delta
        \le
        \ell_{g_B}(dF_z)
        \le
        \|dF_z\|_{g_B}
        \le
        L_\delta.
\end{equation}
Both \(m_\delta\) and \(L_\delta\) tend to \(1\) as \(\delta\to0\).  Therefore,
for the given \(M>1\), choose \(\delta>0\) so small that
\[
        m_\delta\ge M^{-1},
        \qquad
        L_\delta\le M,
\]
and then choose \(\varepsilon_0=\varepsilon_0(M,n)>0\) so that the preceding
normalization estimate holds with this value of \(\delta\) whenever
\(0<\varepsilon<\varepsilon_0\).
Consequently,
\begin{equation}\label{pointwise-M-bound}
        M^{-1}
        \le
        \ell_{g_B}(dF_z)
        \le
        \|dF_z\|_{g_B}
        \le
        M
\end{equation}
for every \(z\in\B^n\).  In particular, \(F\) is a local diffeomorphism.

The map \(F\) extends continuously to \(\overline{\B^n}\) with boundary value
\(f\).  Since \(f\) is a boundary homeomorphism, \(F\) is proper as a map
\(\B^n\to\B^n\): a sequence tending to the boundary is mapped to a sequence
tending to the boundary.  A proper local diffeomorphism has open and closed
image.  Since \(\B^n\) is connected, the image is all of \(\B^n\).  Thus
\(F\) is a covering map of \(\B^n\) onto itself.  Because \(\B^n\) is simply
connected, \(F\) is a global diffeomorphism.  Its interior real-analyticity was
proved in Theorem \ref{smooth-regularity}.

It remains to integrate \eqref{pointwise-M-bound}.  For every piecewise \(C^1\)
curve \(\gamma\),
\[
        \operatorname{length}_B(F\circ\gamma)
        \le
        M\operatorname{length}_B(\gamma).
\]
Taking the infimum over curves joining \(x\) to \(y\) gives
\[
        d_B(F(x),F(y))\le M d_B(x,y).
\]
Since \(F\) is a diffeomorphism, \eqref{pointwise-M-bound} also gives
\(\|dF^{-1}\|_{g_B}\le M\).  Applying the preceding estimate to \(F^{-1}\)
yields
\[
        d_B(x,y)
        \le
        M d_B(F(x),F(y)),
\]
which is the lower bound in \eqref{tukia-type-distance-estimate}.  The
quasiconformal estimate \(K_B(F)\le M^2\) follows from the pointwise singular
value bounds.
\end{proof}

\begin{corollary}[Small CR-quasisymmetry]
If \(f:S^{2n-1}\to S^{2n-1}\) is CR-orientation-preserving and has sufficiently small CR-quasisymmetric
distortion with respect to \(d_{CR}\), then \(E_B(f)\) is a Bergman
bi-Lipschitz diffeomorphism.  More precisely, for every \(M>1\), if the map is CR-orientation-preserving and the
CR-quasisymmetric distortion is sufficiently close to \(1\), then
\[
        M^{-1}d_B(x,y)
        \le d_B(E_B(f)(x),E_B(f)(y))
        \le M d_B(x,y).
\]
\end{corollary}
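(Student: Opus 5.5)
The corollary will follow from Theorem~\ref{theorem-tukia-type-almost-isometry} once small CR-quasisymmetric distortion is shown to imply small cross-ratio distortion. I interpret ``distortion sufficiently close to \(1\)'' as follows: \(f\) is \(\eta\)-CR-quasisymmetric with \(\eta(t)\le \lambda t\) for some \(\lambda\) close to \(1\), or more generally with \(\sup_t |\log\eta(t)-\log t|\) small. The other reading, \(\eta(t)=\lambda\max(t^{\alpha},t^{1/\alpha})\) with \(\lambda,\alpha\to1\), would need a uniformity argument on compacta and is less clean. With this reading, the plan is to produce, for each \(\varepsilon>0\), a \(\lambda_0>1\) such that \(\eta(t)\le\lambda t\) with \(\lambda<\lambda_0\) makes \(f\) an \(\varepsilon\)-almost CR-M\"obius map. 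I would then choose \(\varepsilon<\varepsilon_0(M,n)\) and quote the theorem; the CR-orientation-preserving hypothesis is assumed explicitly and passes straight through.

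The key computation factors the cross-ratio \eqref{cr-cross-ratio} through ratios based at a common point:
\[
        [\xi,\eta,\alpha,\beta]_{CR}
        =\frac{d_{CR}(\xi,\alpha)}{d_{CR}(\xi,\beta)}\cdot
        \frac{d_{CR}(\eta,\beta)}{d_{CR}(\eta,\alpha)}.
\]
Applying the quasisymmetry inequality at base point \(\xi\) to the first factor gives an upper bound of \(\lambda\) times the original ratio. Applying it at base point \(\eta\) to the second factor (the roles of \(\alpha,\beta\) are symmetric) gives the same bound. The image cross-ratio is therefore at most \(\lambda^2\) times the original.

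For the lower bound I would use the swapped quasisymmetry inequality. Since \(\eta(t)\le\lambda t\) applied to the reciprocal ratio gives
\[
        \frac{d(f\xi,f\beta)}{d(f\xi,f\alpha)}\le\lambda\frac{d(\xi,\beta)}{d(\xi,\alpha)},
\]
each factor is also bounded below by \(\lambda^{-1}\) times the original. Hence \(f\) is \(\varepsilon\)-almost CR-M\"obius with \(\varepsilon=2\log\lambda\), which tends to \(0\) as \(\lambda\to1\). If the general reading is needed, the same argument gives \(\varepsilon=2\sup_t|\log(\eta(t)/t)|\).

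The main obstacle is this interpretive step: a distortion function close to \(1\) need not be close to linear, uniformly over all \(t\), in the multiplicative sense. A power-type \(\eta(t)=t^{\alpha}\) with \(\alpha\ne1\) does not give bounded \(\log\) cross-ratio distortion at extreme ratios. I would therefore state the corollary's hypothesis precisely in the multiplicative form \(\lambda^{-1}t\le\eta(t)\le\lambda t\); the lower inequality is automatic from the swap argument above. With that definition fixed, the remaining step is just the application of Theorem~\ref{theorem-tukia-type-almost-isometry}.
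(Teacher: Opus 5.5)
Your proposal is correct and follows the paper's route. The paper's proof is the one-line reduction ``small CR-quasisymmetric distortion implies small quasi-M\"obius distortion, then apply Theorem~\ref{theorem-tukia-type-almost-isometry}'', and your factorization of the cross-ratio into ratios based at $\xi$ and at $\eta$, with the swap giving the lower bound, makes that reduction explicit as $\varepsilon=2\log\lambda$. You are also right that this needs the multiplicative reading $\eta(t)\le\lambda t$, since a power-type distortion function close to the identity does not give uniform $e^{\pm\varepsilon}$ cross-ratio control; the paper leaves this point implicit.
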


\begin{proof}
On the compact CR sphere, small quasisymmetric distortion implies small
quasi-M\"obius distortion quantitatively.  The claim follows from Theorem
\ref{theorem-tukia-type-almost-isometry}.
\end{proof}

\section{Counterexample and sharpness}
\label{section-cr-qs-counterexample}

\subsection{Anti-holomorphic symmetry}

\begin{lemma}[Anti-holomorphic symmetry of barycenters]
\label{lemma-antiholomorphic-barycenter-symmetry}
Let \(C(z)=\overline z\) be complex conjugation on \(\B^n\).  Then, for every
admissible full-support probability measure \(\mu\) on \(S^{2n-1}\),
\[
        \barB(C_*\mu)=C(\barB(\mu)).
\]
More generally, the same conclusion holds for every anti-holomorphic Bergman
isometry obtained by composing complex conjugation with a holomorphic
automorphism.
\end{lemma}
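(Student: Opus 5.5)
The plan is to mimic the proof of Proposition~\ref{equivariance-barycenter}: establish a Busemann covariance law for \(C\), deduce that \(C\) carries the averaged potential of \(\mu\) to that of \(C_*\mu\) up to a constant, and conclude by uniqueness of minimizers. Here the covariance can be checked directly from formula \eqref{busemann-function}, without any abstract argument.

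For \(\xi\in S^{2n-1}\) and \(z\in\B^n\) we have \(\langle \overline z,\overline\xi\rangle=\overline{\langle z,\xi\rangle}\). Hence \(|1-\langle C z,C\xi\rangle|=|1-\langle z,\xi\rangle|\), and also \(|Cz|=|z|\). It follows that
\[
        \beta_{C\xi}(Cz)=\beta_\xi(z)
\]
exactly, with zero cocycle. Integrating against \(\mu\) gives
\[
        \mathcal B_{C_*\mu}(Cz)=\int\beta_{C\xi}(Cz)\,d\mu(\xi)=\mathcal B_\mu(z).
\]
So \(C\) is a bijection of \(\B^n\) intertwining the two potentials.

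Next we need that \(C_*\mu\) is again admissible with full support, so that \(\barB(C_*\mu)\) is defined by Proposition~\ref{existence-uniqueness-barycenter}. This holds because \(C\) is a homeomorphism of the sphere, so atoms and support are transported.

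Given that, \(z\) minimizes \(\mathcal B_\mu\) if and only if \(Cz\) minimizes \(\mathcal B_{C_*\mu}\). Uniqueness then gives \(\barB(C_*\mu)=C(\barB(\mu))\).

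For the general anti-holomorphic isometry \(A\circ C\) with \(A\in\Aut(\B^n)\), write \((A\circ C)_*\mu=A_*(C_*\mu)\). Applying Proposition~\ref{equivariance-barycenter} to the admissible full-support measure \(C_*\mu\) and then the case just proved gives
\[
        \barB((A\circ C)_*\mu)=A(\barB(C_*\mu))=A(C(\barB(\mu))).
\]
Compositions in the other order, \(C\circ A\), are also of this form, since \(C\circ A\circ C\in\Aut(\B^n)\).

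There is no real obstacle. The only point needing care is confirming the sign and conjugation conventions of \(\langle\cdot,\cdot\rangle\), so that the identity \(\beta_{C\xi}(Cz)=\beta_\xi(z)\) is exact. Since only moduli appear in \(\beta\), this is immediate.
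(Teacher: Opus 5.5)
Your proposal is correct and follows essentially the same route as the paper: the exact identity $\beta_{C\xi}(Cz)=\beta_\xi(z)$, transport of the averaged potential, uniqueness of the minimizer, and then composition with holomorphic equivariance for $A\circ C$. Your explicit check that $C_*\mu$ remains admissible with full support, and your remark that $C\circ A=(C\circ A\circ C)\circ C$ is also of this form, fill in small points the paper leaves implicit.
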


\begin{proof}
For \(\eta\in S^{2n-1}\) and \(z\in\B^n\), the formula
\[
        \beta_\eta(z)
        =\log\frac{|1-\langle z,\eta\rangle|^2}{1-|z|^2}
\]
gives
\[
        \beta_{C\eta}(Cz)=\beta_\eta(z),
\]
because \(\langle Cz,C\eta\rangle=\overline{\langle z,\eta\rangle}\) and
\(|Cz|=|z|\).  Hence
\[
        \mathcal B_{C_*\mu}(Cz)
        =\int_{S^{2n-1}}\beta_{C\eta}(Cz)\,d\mu(\eta)
        =\mathcal B_\mu(z).
\]
Thus \(z\) minimizes \(\mathcal B_\mu\) if and only if \(Cz\) minimizes
\(\mathcal B_{C_*\mu}\).  Uniqueness of barycenters gives the result.  The
last statement follows by combining this identity with the holomorphic
equivariance already proved in Proposition~\ref{equivariance-barycenter}.
\end{proof}

\subsection{The contact tennis-ball construction}

We now show that the non-perturbative normalized non-collapse statement fails
in the full CR-quasisymmetric class.  The point is that the tennis-ball type
construction, in the spirit of Laugesen's construction \cite{Laugesen1996},
can be carried out inside the contact category.  Thus the boundary maps
constructed below are not merely round quasisymmetric; they are quasisymmetric
for the CR visual metric.

We use the following notation.  Write a point of \(S^{2n-1}\) as
\[
        \xi=(w,\sqrt{1-|w|^2}\,\omega),
        \qquad
        w\in\overline{\D},\quad \omega\in S^{2n-3}.
\]
On \(\D\) put
\[
        \alpha_{\D}
        =
        \frac{\operatorname{Im}(\overline w\,dw)}{1-|w|^2},
        \qquad
        \Omega_{\D}=d\alpha_{\D}.
\]
Thus, for \(w=re^{i\theta}\),
\[
        \Omega_{\D}
        =
        \frac{2r}{(1-r^2)^2}\,dr\wedge d\theta.
\]
The form \(\Omega_{\D}\) has infinite total area near \(\partial\D\), but finite
area on compact subsets of \(\D\).

Let \(\theta\) denote the standard contact form on \(S^{2n-1}\).  We write
\[
        \mathcal H_\xi=\ker\theta_\xi
\]
for the standard contact distribution.  Explicitly, if
\(V\in T_\xi S^{2n-1}\), then
\[
        \theta_\xi(V)=\operatorname{Im}\langle V,\xi\rangle,
\]
and hence
\[
        \mathcal H_\xi
        =
        \left\{
        V\in T_\xi S^{2n-1}:
        \operatorname{Im}\langle V,\xi\rangle=0
        \right\}.
\]
Since
\[
        T_\xi S^{2n-1}
        =
        \left\{
        V\in\C^n:
        \operatorname{Re}\langle V,\xi\rangle=0
        \right\},
\]
this is equivalently
\[
        \mathcal H_\xi
        =
        \left\{
        V\in\C^n:
        \langle V,\xi\rangle=0
        \right\}.
\]

Let \(d_{CC}\) be the Carnot--Carath\'eodory distance associated with
\(\mathcal H\) and the restriction of the round metric to \(\mathcal H\).
Thus
\[
        d_{CC}(\xi,\zeta)
        =
        \inf_\gamma
        \int_0^1 |\gamma'(t)|\,dt,
\]
where the infimum is taken over all piecewise \(C^1\) curves
\(\gamma:[0,1]\to S^{2n-1}\) satisfying
\[
        \gamma(0)=\xi,\qquad \gamma(1)=\zeta,
        \qquad
        \gamma'(t)\in\mathcal H_{\gamma(t)}
        \quad\text{for a.e. }t.
\]
Equivalently,
\[
        \langle \gamma'(t),\gamma(t)\rangle=0
        \quad\text{for a.e. }t.
\]
Here \(|\gamma'(t)|\) is the Euclidean norm inherited from \(\C^n\).

We shall also use the CR visual metric
\[
        d_{CR}(\xi,\zeta)
        =
        |1-\langle \xi,\zeta\rangle|^{1/2}.
\]
The metrics \(d_{CC}\) and \(d_{CR}\) are bi-Lipschitz equivalent on
\(S^{2n-1}\); namely, there exists a constant \(C_n\ge1\), depending only on
\(n\), such that
\[
        C_n^{-1}d_{CC}(\xi,\zeta)
        \le
        d_{CR}(\xi,\zeta)
        \le
        C_n d_{CC}(\xi,\zeta)
\]
for all \(\xi,\zeta\in S^{2n-1}\).  If \(h\) is a \(C^1\) contact
diffeomorphism, we write
\[
        d_{\mathcal H}h_\xi
        :=
        dh_\xi|_{\mathcal H_\xi}:
        \mathcal H_\xi\to\mathcal H_{h(\xi)}
\]
for its horizontal differential.

\begin{lemma}[Quantitative quasisymmetry of contact diffeomorphisms]
\label{lemma-quantitative-qs-contact}
Let \(h:S^{2n-1}\to S^{2n-1}\) be a smooth contact diffeomorphism.  Define
\[
        L_h
        =
        \sup_{\xi\in S^{2n-1}}
        \|d_{\mathcal H}h_\xi\|,
        \qquad
        L_{h^{-1}}
        =
        \sup_{\xi\in S^{2n-1}}
        \|d_{\mathcal H}h^{-1}_\xi\|.
\]
Then \(h\) is quasisymmetric with respect to \(d_{CC}\), with distortion
function
\[
        \eta_{CC}(t)=L_hL_{h^{-1}}t.
\]
Consequently \(h\) is quasisymmetric with respect to \(d_{CR}\), with distortion
function
\[
        \eta_{CR}(t)=C_n^4L_hL_{h^{-1}}t.
\]
\end{lemma}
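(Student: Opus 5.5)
The proof is a Lipschitz-type argument along horizontal curves. The first step is to show that \(h\) is \(L_h\)-Lipschitz for \(d_{CC}\), and likewise \(h^{-1}\) is \(L_{h^{-1}}\)-Lipschitz. Since \(h\) is a smooth contact diffeomorphism, \(dh_\xi(\mathcal H_\xi)=\mathcal H_{h(\xi)}\). Hence for any piecewise \(C^1\) horizontal curve \(\gamma\) from \(\xi\) to \(\zeta\), the curve \(h\circ\gamma\) is again horizontal and piecewise \(C^1\), with \((h\circ\gamma)'(t)=d_{\mathcal H}h_{\gamma(t)}\gamma'(t)\). It follows that
\[
        \int_0^1|(h\circ\gamma)'(t)|\,dt\le L_h\int_0^1|\gamma'(t)|\,dt .
\]
Taking the infimum over admissible \(\gamma\) gives \(d_{CC}(h(\xi),h(\zeta))\le L_h\,d_{CC}(\xi,\zeta)\). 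The same argument applied to \(h^{-1}\), which is also a smooth contact diffeomorphism, gives \(d_{CC}(\xi,\zeta)\le L_{h^{-1}}\,d_{CC}(h(\xi),h(\zeta))\). So \(h\) is bi-Lipschitz for \(d_{CC}\), with constants \(L_h\) and \(L_{h^{-1}}\). We also need \(d_{CC}\) to be finite, i.e.\ any two points can be joined by horizontal curves. This holds by the Chow connectivity that underlies the bi-Lipschitz comparison with \(d_{CR}\) stated just before the lemma.

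The second step turns the bi-Lipschitz bounds into quasisymmetry. For distinct \(\xi,\eta_1,\eta_2\), the upper bound in the numerator and the lower bound in the denominator give
\[
        \frac{d_{CC}(h(\xi),h(\eta_1))}{d_{CC}(h(\xi),h(\eta_2))}
        \le
        \frac{L_h\,d_{CC}(\xi,\eta_1)}{L_{h^{-1}}^{-1}\,d_{CC}(\xi,\eta_2)}
        =L_hL_{h^{-1}}\,t,
\]
where \(t=d_{CC}(\xi,\eta_1)/d_{CC}(\xi,\eta_2)\). This is exactly \(\eta_{CC}\).

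The third step transfers the estimate to \(d_{CR}\). Each of the four distances in the \(d_{CR}\)-ratios is within a factor \(C_n\) of the corresponding \(d_{CC}\)-distance. The image ratio therefore picks up at most \(C_n^2\) when compared with its \(d_{CC}\) version, and the source ratio \(t_{CC}\) is at most \(C_n^2\,t_{CR}\). Combining these gives
\[
        \frac{d_{CR}(h(\xi),h(\eta_1))}{d_{CR}(h(\xi),h(\eta_2))}
        \le C_n^2\,L_hL_{h^{-1}}\,C_n^2\,t_{CR}
        = C_n^4L_hL_{h^{-1}}\,t_{CR}.
\]

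The only point requiring care is the first step. Contact-ness is what keeps \(h\circ\gamma\) inside the admissible class of horizontal curves. Without it, \(h\circ\gamma\) could fail to be horizontal and the length comparison would not control \(d_{CC}\). Smoothness of \(h\) and compactness of the sphere make \(L_h\) and \(L_{h^{-1}}\) finite, and piecewise \(C^1\) regularity is preserved under composition with \(h\). Everything after that is bookkeeping of constants.
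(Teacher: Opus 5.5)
Your proposal is correct and follows the paper's proof essentially step for step: it compares lengths of horizontal curves to get the \(d_{CC}\) bi-Lipschitz bounds with constants \(L_h\) and \(L_{h^{-1}}\), deduces the linear ratio estimate, and then transfers to \(d_{CR}\), picking up a factor \(C_n^2\) on each side for a total of \(C_n^4\). Your remark that \(d_{CC}\) is finite by Chow connectivity is a point the paper leaves implicit in its stated bi-Lipschitz comparison between \(d_{CC}\) and \(d_{CR}\).
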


\begin{proof}
Since \(h\) is contact, it maps horizontal curves to horizontal curves.  If
\(\gamma\) is horizontal, then
\[
        \operatorname{length}_{CC}(h\circ\gamma)
        \le
        L_h\,\operatorname{length}_{CC}(\gamma).
\]
Taking the infimum over horizontal curves joining \(\xi\) to \(\eta\), we get
\[
        d_{CC}(h(\xi),h(\eta))
        \le
        L_h\,d_{CC}(\xi,\eta).
\]
Applying the same argument to \(h^{-1}\) gives
\[
        d_{CC}(\xi,\eta)
        \le
        L_{h^{-1}}\,d_{CC}(h(\xi),h(\eta)).
\]
Therefore
\[
        \frac{d_{CC}(h(\xi),h(\eta))}
             {d_{CC}(h(\xi),h(\alpha))}
        \le
        L_hL_{h^{-1}}
        \frac{d_{CC}(\xi,\eta)}
             {d_{CC}(\xi,\alpha)}.
\]
This proves the \(d_{CC}\)-quasisymmetry statement.  Using
\[
        C_n^{-1}d_{CC}\le d_{CR}\le C_n d_{CC},
\]
we obtain
\[
\begin{aligned}
        \frac{d_{CR}(h(\xi),h(\eta))}
             {d_{CR}(h(\xi),h(\alpha))}
        &\le
        C_n^2
        \frac{d_{CC}(h(\xi),h(\eta))}
             {d_{CC}(h(\xi),h(\alpha))}       \\
        &\le
        C_n^2 L_hL_{h^{-1}}
        \frac{d_{CC}(\xi,\eta)}
             {d_{CC}(\xi,\alpha)}             \\
        &\le
        C_n^4L_hL_{h^{-1}}
        \frac{d_{CR}(\xi,\eta)}
             {d_{CR}(\xi,\alpha)}.
\end{aligned}
\]
Hence \(h\) is \(d_{CR}\)-quasisymmetric with distortion function
\[
        \eta_{CR}(t)=C_n^4L_hL_{h^{-1}}t.
\]
\end{proof}

\begin{lemma}[Contact lift of an area-preserving disk map]
\label{lemma-contact-lift}
Let \(H:\D\to\D\) be a smooth orientation-preserving diffeomorphism satisfying
\[
        H^*\Omega_{\D}=\Omega_{\D}.
\]
Assume that \(H\) extends smoothly to \(\overline{\D}\), is the identity in a
collar of \(\partial\D\), and is conjugation-equivariant:
\[
        H(\overline w)=\overline{H(w)}.
\]
Then there exists a smooth real-valued function
\[
        \psi:\overline{\D}\to\R,
        \qquad
        \psi(\overline w)=-\psi(w),
\]
such that
\[
        h(w,\sqrt{1-|w|^2}\,\omega)
        =
        \left(
        H(w),
        \sqrt{1-|H(w)|^2}\,e^{i\psi(w)}\omega
        \right)
\]
defines a smooth contact diffeomorphism of \(S^{2n-1}\).  Moreover, \(h\)
commutes with the maps
\[
        R_U(z_1,z')=(z_1,Uz'),
        \qquad U\in U(n-1),
\]
and with complex conjugation
\[
        C(z_1,z')=(\overline z_1,\overline{z'}).
\]
If \(H\) is the identity in neighborhoods of \(1\) and \(-1\), then \(h\) is
the identity in neighborhoods of
\[
        N=e_1,
        \qquad
        S=-e_1.
\]
\end{lemma}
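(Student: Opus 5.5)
The plan is to reduce the contact condition to a single exact-form equation on the disk and solve it with a primitive.

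\textbf{Reduction.} Write the standard contact form as $\theta=\operatorname{Im}\langle d\xi,\xi\rangle$, so that $\theta(V)=\operatorname{Im}\langle V,\xi\rangle$. In the coordinates $\xi=(w,\rho\,\omega)$ with $\rho=\sqrt{1-|w|^2}$, a direct computation gives
\[
\theta=\operatorname{Im}(\overline w\,dw)+\rho^2\,\theta_{S^{2n-3}}(\omega),
\]
where $\theta_{S^{2n-3}}=\operatorname{Im}\langle d\omega,\omega\rangle$; the cross term involving $d\rho$ is real and drops out. Pulling back under the ansatz $h$, the rotation $\omega\mapsto e^{i\psi}\omega$ replaces $\theta_{S^{2n-3}}$ by $\theta_{S^{2n-3}}+d\psi$. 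Dividing by $\rho^2$, one gets
\[
h^*\theta=\rho(H(w))^2\bigl(H^*\alpha_{\D}+d\psi+\theta_{S^{2n-3}}\bigr),
\qquad
\theta=\rho(w)^2\bigl(\alpha_{\D}+\theta_{S^{2n-3}}\bigr).
\]
So $h^*\theta=\lambda\theta$ with $\lambda=\rho(H(w))^2/\rho(w)^2$ holds exactly when
\[
d\psi=\alpha_{\D}-H^*\alpha_{\D}.
\]

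\textbf{Solving for $\psi$.} The form $\alpha_{\D}-H^*\alpha_{\D}$ is closed, because $H^*\Omega_{\D}=\Omega_{\D}$. Since $\D$ is simply connected, it is exact, and we set $\psi(w)=\int_0^w(\alpha_{\D}-H^*\alpha_{\D})$. This form vanishes identically on the collar where $H=\Id$. Hence $\psi$ is constant near $\partial\D$, and it extends smoothly to $\overline{\D}$, even though $\alpha_{\D}$ itself is singular at the boundary.

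\textbf{Oddness of $\psi$.} Let $c(w)=\overline w$. Then $c^*\alpha_{\D}=-\alpha_{\D}$, and from $H\circ c=c\circ H$ we get $c^*H^*\alpha_{\D}=H^*c^*\alpha_{\D}=-H^*\alpha_{\D}$. Therefore $c^*d\psi=-d\psi$, so $\psi(\overline w)+\psi(w)$ is constant, and it equals $2\psi(0)=0$ at $w=0$ (which is real).

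\textbf{Diffeomorphism, smoothness and boundary behaviour.}
\begin{itemize}
\item The smooth inverse is obtained by the same construction using $H^{-1}$.
\item Smoothness near $|w|=1$, where the coordinates degenerate, comes from the collar. There $H=\Id$ and $\psi$ equals a constant $c_0$, so $h(z_1,z')=(z_1,e^{ic_0}z')$, which is smooth and contact.
\item Near $N=e_1$ and $S=-e_1$, $h$ is the identity provided $c_0=0$. This is the main subtle point. Since $\psi$ is odd, its constant values near $1$ and near $-1$, both real points of the collar, are $0$, and the collar is connected, so $c_0=0$. The identity then holds near $\pm e_1$, and indeed on the whole collar.
\end{itemize}

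\textbf{Symmetries.}
\begin{itemize}
\item Commuting with $R_U$ is clear, since $\omega\mapsto U\omega$ commutes with multiplication by the scalar $e^{i\psi}$.
\item For conjugation, $h(\overline w,\rho\overline\omega)=(\overline{H(w)},\rho(H)\,e^{-i\psi(w)}\overline\omega)$, which is $\overline{h(\xi)}$.
\end{itemize}

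The main obstacle is the bookkeeping of the pullback computation. The key point to verify is that the $d\rho$ terms and the conformal factor combine correctly, so that the contact condition becomes exactly $d\psi=\alpha_{\D}-H^*\alpha_{\D}$.
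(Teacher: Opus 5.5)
Your proposal is correct and follows essentially the same route as the paper: you reduce the contact condition to $d\psi=\alpha_{\D}-H^*\alpha_{\D}$, integrate this closed form (closed by $H^*\Omega_{\D}=\Omega_{\D}$) on the simply connected disk, and get oddness of $\psi$ from the anti-invariance of $\alpha_{\D}$ under $w\mapsto\overline w$ together with the conjugation-equivariance of $H$. You also spell out points the paper leaves implicit: the normalization $\psi(0)=0$, the fact that oddness forces $\psi=0$ on the real axis and hence on the whole connected collar, smoothness across the circle $|w|=1$ where the coordinates degenerate, and the explicit inverse. Your form $\operatorname{Im}\langle d\omega,\omega\rangle$ is the sign convention consistent with $\theta_\xi(V)=\operatorname{Im}\langle V,\xi\rangle$ and with the $+d\psi$ term in the pullback.
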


\begin{proof}
On \(S^{2n-1}\), the standard contact form may be written, up to a harmless
constant factor, as
\[
        \theta
        =
        \operatorname{Im}(\overline w\,dw)
        +
        (1-|w|^2)\operatorname{Im}\langle\omega,d\omega\rangle.
\]
For the proposed map \(h\),
\[
\begin{aligned}
        h^*\theta
        &=
        \operatorname{Im}(\overline{H(w)}\,dH(w))
        +
        (1-|H(w)|^2)d\psi(w)  \\
        &\qquad
        +
        (1-|H(w)|^2)\operatorname{Im}\langle\omega,d\omega\rangle.
\end{aligned}
\]
Thus \(h\) is contact if
\[
        h^*\theta
        =
        \frac{1-|H(w)|^2}{1-|w|^2}\,\theta.
\]
This is equivalent to
\[
        d\psi
        =
        \frac{\operatorname{Im}(\overline w\,dw)}{1-|w|^2}
        -
        \frac{\operatorname{Im}(\overline{H(w)}\,dH(w))}{1-|H(w)|^2}
        =
        \alpha_{\D}-H^*\alpha_{\D}.
\]
Since
\[
        d(\alpha_{\D}-H^*\alpha_{\D})
        =
        \Omega_{\D}-H^*\Omega_{\D}
        =
        0,
\]
and \(\D\) is simply connected, the one-form
\(\alpha_{\D}-H^*\alpha_{\D}\) is exact.  Because \(H=\Id\) in a collar of
\(\partial\D\), this one-form vanishes there; therefore its primitive extends
smoothly to \(\overline{\D}\).  We choose this primitive as \(\psi\).

Let
\[
        \kappa:\D\to\D,
        \qquad
        \kappa(w)=\overline w,
\]
be complex conjugation.  Since
\[
        \alpha_{\D}
        =
        \frac{\operatorname{Im}(\overline w\,dw)}{1-|w|^2},
\]
we have
\[
        \kappa^*\alpha_{\D}=-\alpha_{\D}.
\]
Indeed, if \(w=x+iy\), then
\[
        \operatorname{Im}(\overline w\,dw)=x\,dy-y\,dx,
\]
and under \(\kappa(x,y)=(x,-y)\) this one-form changes sign, while the
denominator \(1-|w|^2\) is unchanged.  Since \(H\) is
conjugation-equivariant,
\[
        H\circ\kappa=\kappa\circ H,
\]
and hence
\[
        \kappa^*(\alpha_{\D}-H^*\alpha_{\D})
        =
        -(\alpha_{\D}-H^*\alpha_{\D}).
\]
Therefore, if
\[
        d\psi=\alpha_{\D}-H^*\alpha_{\D},
\]
then
\[
        d(\psi\circ\kappa)=-d\psi.
\]
Thus \(\psi\circ\kappa+\psi\) is constant.  After changing \(\psi\) by an
additive constant, we may assume
\[
        \psi(\overline w)=-\psi(w).
\]
The assertions about the \(U(n-1)\)-symmetry and complex conjugation follow directly from the formula for \(h\).  If \(H=\Id\) near \(1\) and \(-1\), then
\(\psi\) is locally constant there; after the same choice of additive constant,
\(h\) is the identity near \(N\) and \(S\).
\end{proof}

\begin{lemma}[Area-preserving contact tennis-ball map]
\label{lemma-contact-tennis-ball}
Let \(n\ge2\), and fix any \(t_0\in(0,1)\).  Then there exists a smooth
contact diffeomorphism
\[
        h:S^{2n-1}\to S^{2n-1}
\]
with the following properties.  Put
\[
        N=e_1,
        \qquad
        S=-e_1,
        \qquad
        z_t=te_1,
        \qquad
        \sigma_t=\sigma_{z_t},
        \qquad
        x(\xi)=\operatorname{Re}\xi_1.
\]
For \(U\in U(n-1)\), write
\[
        R_U(z_1,z')=(z_1,Uz'),
\]
and let
\[
        C(z_1,z')=(\overline z_1,\overline{z'}).
\]
Then:
\begin{enumerate}
\item \(h\circ R_U=R_U\circ h\) for every \(U\in U(n-1)\).
\item \(h\circ C=C\circ h\).
\item \(h\) is the identity near \(N\) and \(S\).
\item \(h\) is CR-quasisymmetric.  More precisely, it is
\(\eta\)-quasisymmetric with
\[
        \eta(t)=C_n^4L_hL_{h^{-1}}t,
\]
where \(L_h\) and \(L_{h^{-1}}\) are the horizontal Lipschitz constants from
Lemma~\ref{lemma-quantitative-qs-contact}.
\item For the prescribed \(t_0\in(0,1)\),
\[
        \int_{S^{2n-1}} x(h(\xi))\,d\sigma_{t_0}(\xi)<0
\]
and
\[
        \int_{S^{2n-1}} x(h(\xi))\,d\sigma_{-t_0}(\xi)>0.
\]
\end{enumerate}
\end{lemma}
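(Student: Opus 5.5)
We will build \(h\) as the contact lift, via Lemma~\ref{lemma-contact-lift}, of a suitably chosen \(\Omega_{\D}\)-area-preserving disk map \(H\). Properties (1)--(3) then come directly from that lemma, provided \(H\) is conjugation-equivariant, equals the identity in a collar of \(\partial\D\), and equals the identity near \(\pm1\). Property (4) comes from Lemma~\ref{lemma-quantitative-qs-contact}: \(h\) is a smooth contact diffeomorphism of a compact manifold, so \(L_h\) and \(L_{h^{-1}}\) are finite. All the work is therefore in choosing \(H\) so that the sign conditions in (5) hold.

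\textbf{Reduction of (5) to a disk integral.} Since \(x(h(\xi))=\operatorname{Re}H(w)\) depends only on the first coordinate \(w=\xi_1\), we push \(\sigma_{\pm t_0}\) forward under \(\xi\mapsto\xi_1\). For \(n\ge2\), the pushforward of \(\sigma\) is \(c_n(1-|w|^2)^{n-2}\,dA(w)\) on \(\D\); for \(n=2\) it is a constant times Lebesgue measure. The Poisson--Szeg\H{o} kernel at \(z=\pm t_0e_1\) depends only on \(\xi_1\). Hence
\[
        I_\pm:=\int x(h(\xi))\,d\sigma_{\pm t_0}(\xi)
        =\int_{\D}\operatorname{Re}H(w)\,\rho_\pm(w)\,dA(w),
\]
with
\[
        \rho_\pm(w)=c_n(1-|w|^2)^{n-2}\left(\frac{1-t_0^2}{|1\mp t_0 w|^2}\right)^{n}.
\]
The density \(\rho_+\) is concentrated toward \(w=1\), and \(\rho_-(w)=\rho_+(-w)\).

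\textbf{Construction of \(H\) (tennis-ball swap).} Pick two small disjoint closed disks \(D_1\) near \(+a\) and \(D_2\) near \(-a\), centered on the real axis, with \(0<a<1\). Choose them so that \(D_1\) and \(D_2\) have equal \(\Omega_{\D}\)-area; by the symmetry \(w\mapsto -w\) of \(\Omega_{\D}\), disks with centers \(\pm a\) and equal Euclidean radius do. Inside a compact annular region avoiding \(\pm1\) and \(\partial\D\), we take the time-one map of a Hamiltonian flow for \(\Omega_{\D}\). The Hamiltonian is chosen to transport \(D_1\) onto \(D_2\) along a path through the upper half-plane and \(D_2\) onto \(D_1\) along the mirror path through the lower half-plane. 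Concretely, we use rotation by \(\pi\) about \(0\) on an annulus containing \(\pm a\), cut off radially in \(|w|\) so that it is the identity near \(\partial\D\) and near the real points \(\pm1\). This is a Hamiltonian twist \(H(re^{i\varphi})=re^{i(\varphi+\tau(r))}\) with \(\tau(r)=\pi\) near \(r=a\) and \(\tau=0\) outside a neighborhood of \(a\). Such a twist preserves \(\Omega_{\D}\), since \(\Omega_{\D}\) depends only on \(r\), and it commutes with conjugation only up to orientation. To restore conjugation-equivariance we instead use a Hamiltonian \(K\) with \(K(\overline w)=-K(w)\), whose flow is then equivariant. For example, take \(K\) a bump function times \(\operatorname{Im}w\) adapted so that its flow carries \(+a\) to \(-a\) through the upper half-plane. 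This does not by itself swap the two disks, so we compose it with its reverse on the lower half-plane, using a two-bump Hamiltonian that generates a ``figure-eight'' exchange.

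\textbf{Sign check.} After the exchange, mass near \(w\approx a\), which carries the largest \(\rho_+\)-weight, is sent to \(\operatorname{Re}H\approx -a\). Choosing \(a\) close to \(1\), the annular region thick enough, and the disks large in \(\rho_+\)-measure, the negative contribution dominates, so \(I_+<0\). By the reflection \(w\mapsto -w\), which we arrange to commute with \(H\) as well, \(I_-=-I_+>0\).

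The main obstacle is reconciling three requirements at once: conjugation-equivariance, exact \(\Omega_{\D}\)-area preservation, and enough displacement to flip the sign of \(I_+\) for the prescribed \(t_0\). I would try first to take \(H\) equal to the rotation by \(\pi\) on a large compact sub-disk \(\{|w|\le r_1\}\), interpolated to the identity by a radial twist on \(r_1<|w|<r_2\). Its \(\rho_+\)-weighted real part is
\[
        -\int_{|w|\le r_1}\operatorname{Re}w\,\rho_+\,dA
\]
plus a boundary error that tends to \(0\) as \(r_1\to1\), with the first term strictly negative. The difficulty is that a rotation by \(\pi\) is not conjugation-equivariant: conjugation maps it to rotation by \(-\pi\), and the two agree only on \(|w|\le r_1\). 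So I would replace the twist interpolation \(\tau\) from \(\pi\) down to \(0\) by a conjugation-symmetric smooth deformation in the annulus. Checking that this can be done preserving \(\Omega_{\D}\), and being the identity near \(\pm1\), is the delicate step.
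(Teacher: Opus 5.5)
There is a genuine gap: you never actually produce the disk map $H$, and both constructions you sketch are ruled out by the conjugation symmetry in (2). That symmetry is also a hypothesis of Lemma~\ref{lemma-contact-lift}, so you cannot drop it.

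\textbf{The obstruction.} If $H(\overline w)=\overline{H(w)}$, then $H$ and $H^{-1}$ both map the real diameter $(-1,1)$ to itself. So $H|_{(-1,1)}$ is a homeomorphism of an interval, hence monotone, and since $H=\Id$ near $\pm1$ it is increasing. This has three consequences.
\begin{itemize}
\item $H$ cannot exchange disks centred at $a$ and $-a$: that would force $H(-a)>0>H(a)$.
\item No conjugation-symmetric replacement of your twist can equal $w\mapsto -w$ on $\{|w|\le r_1\}$, because that map is decreasing on $[-r_1,r_1]$. So the ``delicate step'' is not delicate but impossible. Note that $w\mapsto -w$ itself does commute with conjugation; the obstruction is global, through the real diameter.
\item A Hamiltonian with $K(\overline w)=-K(w)$ has an equivariant flow, which keeps real points real. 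It cannot carry $a$ ``through the upper half-plane''.
\end{itemize}

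\textbf{The extra symmetry backfires.} Your plan to make $H$ also commute with $w\mapsto -w$ makes things worse. Combined with conjugation, it forces $H$ to commute with $w\mapsto-\overline w$, so $H$ preserves the imaginary diameter too. Being the identity in a collar, $H$ then maps each open quadrant to itself, so $\operatorname{Re}H(w)$ has the sign of $\operatorname{Re}w$. Hence
\[
I_+=\int_{\{\operatorname{Re}w>0\}}\operatorname{Re}H(w)\bigl(\rho_+(w)-\rho_+(-w)\bigr)\,dA(w)>0,
\]
which is the opposite of (5).

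\textbf{The missing idea.} Accept that $H$ must preserve the upper half-disk, keep $H=\Id$ near the real axis, and exploit the fact that $\Omega_{\D}$ has infinite area near $\partial\D$. The paper proceeds as follows.
\begin{itemize}
\item Your density comparison gives $\nu_{t_0}(\{\operatorname{Re}w>0\})>\tfrac12$.
\item Pick $p$ with $\tfrac12<p<\nu_{t_0}(\{\operatorname{Re}w>0\})$, and likewise for $-t_0$.
\item Pick $a<1$ with $p>1/(1+a)$.
\item Take a conjugation-invariant finite union $K_+$ of closed disks in $\{\operatorname{Re}w>0\}$, away from the real axis, with $\nu_{t_0}(K_+)>p$.
\item The $\Omega_{\D}$-area of $\{\operatorname{Re}w<-a,\ \operatorname{Im}w>0,\ |w|<1-\delta\}$ tends to $\infty$ as $\delta\downarrow0$. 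So there are target disks there (and symmetrically near $\operatorname{Re}w>a$) of equal $\Omega_{\D}$-area.
\item A compactly supported area-preserving map of the upper half-disk (relative Moser, or Hamiltonian isotopy extension), reflected across the real axis, sends $K_+$ into $\{\operatorname{Re}w<-a\}$ and $K_-=-K_+$ into $\{\operatorname{Re}w>a\}$.
\end{itemize}
The crude bound
\[
I_+\le -a\,\nu_{t_0}(K_+)+\bigl(1-\nu_{t_0}(K_+)\bigr)<1-(1+a)p<0
\]
then holds for every $t_0\in(0,1)$, however small the asymmetry of $\rho_+$. The inequality $I_->0$ comes from the analogous estimate with $K_-$, not from an odd symmetry of $H$.

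Your reduction of (5) to a disk integral is correct, as is your derivation of (1), (3), (4) from the two preceding lemmas.
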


\begin{proof}
Let \(\nu_t\) denote the projection of \(\sigma_{te_1}\) to the closed
\(w\)-disk.  Its density has the form
\[
        c_n
        \left(
        \frac{1-t^2}{|1-t\overline w|^2}
        \right)^n
        (1-|w|^2)^{n-2},
        \qquad |w|<1.
\]
For every \(t>0\),
\[
        \nu_t(\{\operatorname{Re}w>0\})>\frac12,
\]
and
\[
        \nu_{-t}(\{\operatorname{Re}w<0\})>\frac12.
\]
Indeed, if \(w=x+iy\) with \(x>0\), then
\[
        |1-t\overline w|^2
        =
        1-2tx+t^2|w|^2
        <
        1+2tx+t^2|w|^2
        =
        |1+t\overline w|^2.
\]
Thus the density of \(\nu_t\) at \(w\) is larger than the density at \(-w\),
and the first strict inequality follows by pairing the two half-disks.  The
second inequality follows by reflection.  In particular, the prescribed
\(t_0\) may be chosen arbitrarily close to \(0\), although the bias then becomes
small.

Choose a number \(p\) such that
\[
        \frac12<p<
        \min\left\{
        \nu_{t_0}(\{\operatorname{Re}w>0\}),
        \nu_{-t_0}(\{\operatorname{Re}w<0\})
        \right\}.
\]
Then choose \(a\in(0,1)\) so close to \(1\) that
\[
        p>\frac1{1+a}.
\]
The real axis has \(\nu_{t_0}\)-measure zero.  Hence, by inner regularity,
we may choose finitely many pairwise disjoint closed smooth disks
\[
        K_{+,1},\ldots,K_{+,m}
        \Subset
        \{w\in\D:\operatorname{Re}w>0,\ \operatorname{Im}w>0\}
\]
such that, with
\[
        K_+
        =
        \bigcup_{j=1}^m
        \bigl(K_{+,j}\cup\overline{K_{+,j}}\bigr),
        \qquad
        K_-=-K_+,
\]
one has
\[
        \nu_{t_0}(K_+)>p,
        \qquad
        \nu_{-t_0}(K_-)>p.
\]
Here \(\overline{K_{+,j}}=\{\overline w:w\in K_{+,j}\}\).  Thus
\(K_+\) and \(K_-\) are conjugation-invariant compact sets, but their
components stay a positive distance from the real axis.

We next choose target disks.  Since
\[
        \Omega_{\D}
        =
        \frac{2r}{(1-r^2)^2}\,dr\wedge d\theta,
\]
the \(\Omega_{\D}\)-area of each of the upper-half truncated caps
\[
        \{w\in\D:\operatorname{Re}w>a,\ \operatorname{Im}w>0,
        \ |w|<1-\delta\}
\]
and
\[
        \{w\in\D:\operatorname{Re}w<-a,\ \operatorname{Im}w>0,
        \ |w|<1-\delta\}
\]
tends to \(+\infty\) as \(\delta\downarrow0\).  After choosing
\(\delta>0\) sufficiently small, choose pairwise disjoint closed smooth disks
\[
        Q_{-,j}
        \Subset
        \{\operatorname{Re}w<-a,\ \operatorname{Im}w>0,
        \ |w|<1-\delta\},
\]
\[
        Q_{+,j}
        \Subset
        \{\operatorname{Re}w>a,\ \operatorname{Im}w>0,
        \ |w|<1-\delta\},
        \qquad 1\le j\le m,
\]
so that the \(\Omega_{\D}\)-area of \(Q_{-,j}\) equals that of
\(K_{+,j}\), while the \(\Omega_{\D}\)-area of \(Q_{+,j}\) equals that of
\(-\overline{K_{+,j}}\).  Put
\[
        Q_-
        =
        \bigcup_{j=1}^m
        \bigl(Q_{-,j}\cup\overline{Q_{-,j}}\bigr),
        \qquad
        Q_+
        =
        \bigcup_{j=1}^m
        \bigl(Q_{+,j}\cup\overline{Q_{+,j}}\bigr).
\]
Then
\[
        Q_-\Subset\{\operatorname{Re}w<-a,\ |w|<1-\delta\},
        \qquad
        Q_+\Subset\{\operatorname{Re}w>a,\ |w|<1-\delta\}.
\]

Choose the collar
\[
        \mathcal C_\delta=\{w\in\D:1-\delta/2<|w|<1\}.
\]
All the source and target disks are disjoint from \(\mathcal C_\delta\) and
from the real axis.  On the upper half-disk, the relative Moser theorem,
equivalently the compactly supported Hamiltonian isotopy extension theorem,
gives an \(\Omega_{\D}\)-area-preserving diffeomorphism, supported in a
compact set away from both the real axis and \(\mathcal C_\delta\), which
maps
\[
        K_{+,j}\ \text{into}\ Q_{-,j},
        \qquad
        -\overline{K_{+,j}}\ \text{into}\ Q_{+,j},
        \qquad 1\le j\le m.
\]
Reflect this diffeomorphism across the real axis and extend it by the identity
near the real axis and on \(\mathcal C_\delta\).  The resulting map
\[
        H:\D\to\D
\]
is a smooth orientation-preserving \(\Omega_{\D}\)-area-preserving
diffeomorphism satisfying
\[
        H(K_+)\subset Q_-,
        \qquad
        H(K_-)\subset Q_+,
\]
\[
        H(\overline w)=\overline{H(w)},
\]
and \(H=\Id\) on \(\mathcal C_\delta\).

Apply Lemma~\ref{lemma-contact-lift}.  We obtain a smooth contact
diffeomorphism
\[
        h(w,\sqrt{1-|w|^2}\,\omega)
        =
        \left(
        H(w),
        \sqrt{1-|H(w)|^2}\,e^{i\psi(w)}\omega
        \right).
\]
It has the asserted symmetries and is the identity near \(N\) and \(S\).
By Lemma~\ref{lemma-quantitative-qs-contact}, it is CR-quasisymmetric with the
stated quantitative distortion function.

Finally,
\[
        x(h(\xi))=\operatorname{Re}H(w).
\]
Since \(H(K_+)\subset Q_-\subset\{\operatorname{Re}w<-a\}\), we get
\[
\begin{aligned}
        \int_{S^{2n-1}}x(h(\xi))\,d\sigma_{t_0}(\xi)
        &=
        \int_{\overline{\D}}\operatorname{Re}H(w)\,d\nu_{t_0}(w)  \\
        &\le
        -a\,\nu_{t_0}(K_+)
        +
        \bigl(1-\nu_{t_0}(K_+)\bigr)  \\
        &=
        1-(1+a)\nu_{t_0}(K_+)  \\
        &<
        1-(1+a)p
        <
        0.
\end{aligned}
\]
Similarly,
\[
\begin{aligned}
        \int_{S^{2n-1}}x(h(\xi))\,d\sigma_{-t_0}(\xi)
        &=
        \int_{\overline{\D}}\operatorname{Re}H(w)\,d\nu_{-t_0}(w)  \\
        &\ge
        a\,\nu_{-t_0}(K_-)
        -
        \bigl(1-\nu_{-t_0}(K_-)\bigr)  \\
        &=
        (1+a)\nu_{-t_0}(K_-)-1  \\
        &>
        (1+a)p-1
        >
        0.
\end{aligned}
\]
This proves the lemma.
\end{proof}

\subsection{Failure of injectivity}

\begin{proof}[Proof of Theorem~\ref{theorem-cr-qs-counterexample}]

Let \(h\) be the contact tennis-ball map from
Lemma~\ref{lemma-contact-tennis-ball}.  Since \(h\) is a smooth contact
diffeomorphism of the compact CR sphere, it is CR-quasisymmetric and
CR-orientation-preserving.

For \(t\in(-1,1)\), put \(z_t=te_1\).  The visual measure \(\sigma_{z_t}\) is
invariant under \(U(n-1)\) and under complex conjugation.  Since \(h\) commutes
with these symmetries, the measure
\[
        h_*\sigma_{z_t}
\]
has the same symmetries.  Barycenters are equivariant under holomorphic
automorphisms and under complex conjugation.  Hence
\[
        E_B(h)(te_1)=u(t)e_1
\]
for some continuous real-valued function \(u:(-1,1)\to(-1,1)\).

Let \(\mu\) be a probability measure on \(S^{2n-1}\) with the same symmetries,
and suppose
\[
        \barB(\mu)=se_1.
\]
Along the real geodesic \(s\mapsto se_1\),
\[
        \frac{d}{ds}\bigg|_{s=0}\beta_\eta(se_1)
        =
        -2\operatorname{Re}\eta_1.
\]
Therefore the derivative at \(0\) of the averaged Busemann potential is
\[
        -2\int_{S^{2n-1}}\operatorname{Re}\eta_1\,d\mu(\eta).
\]
Since the averaged Busemann potential is strictly convex on this real geodesic,
its minimizer lies on the negative side if this derivative is positive, and on
the positive side if this derivative is negative.

Apply this to
\[
        \mu_+=h_*\sigma_{t_0e_1},
        \qquad
        \mu_-=h_*\sigma_{-t_0e_1}.
\]
By Lemma~\ref{lemma-contact-tennis-ball},
\[
        \int \operatorname{Re}\eta_1\,d\mu_+(\eta)<0,
        \qquad
        \int \operatorname{Re}\eta_1\,d\mu_-(\eta)>0.
\]
Hence
\[
        u(t_0)<0,
        \qquad
        u(-t_0)>0.
\]
On the other hand, \(h\) is the identity near \(N=e_1\) and \(S=-e_1\).  The
boundary extension theorem gives
\[
        \lim_{t\to1}u(t)=1,
        \qquad
        \lim_{t\to-1}u(t)=-1.
\]
By continuity, there exist
\[
        a\in(-1,-t_0),
        \qquad
        b\in(t_0,1)
\]
such that
\[
        u(a)=0=u(b).
\]
Therefore
\[
        E_B(h)(ae_1)=0=E_B(h)(be_1),
        \qquad a\ne b.
\]
Thus \(E_B(h)\) is not injective.
\end{proof}

\begin{corollary}[Failure of normalized non-collapse]
\label{corollary-failure-noncollapse}
For every \(n\ge2\), there exists a CR-orientation-preserving
CR-quasisymmetric homeomorphism
\[
        g:S^{2n-1}\to S^{2n-1}
\]
such that
\[
        E_B(g)(0)=0
\]
and
\[
        dE_B(g)_0
\]
is singular.  Equivalently,
\[
        \ell(dE_B(g)_0)=0.
\]
Thus the normalized matrix non-collapse statement has a negative answer in the
full CR-quasisymmetric class.
\end{corollary}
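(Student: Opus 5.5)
We use the tennis-ball map $h$ from Lemma~\ref{lemma-contact-tennis-ball} and the real-valued function $u$ from the proof of Theorem~\ref{theorem-cr-qs-counterexample}, defined by $E_B(h)(te_1)=u(t)e_1$. By Theorem~\ref{smooth-regularity}, $u$ is real-analytic on $(-1,1)$, being the first coordinate of a real-analytic map restricted to a real line. We know that $u(t_0)<0$, that $u(-t_0)>0$, and that $u(t)\to\pm1$ as $t\to\pm1$. Hence $u$ is not monotone: it rises from $-1$ to a positive value on $(-1,-t_0]$, dips to a negative value at $t_0$, and rises again to $1$. The plan is to locate a point where $u$ vanishes while $u'$ also vanishes, and then normalize that point to the origin.

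Let $a$ be the largest zero of $u$ in $(-t_0,t_0)$; one exists since $u(-t_0)>0>u(t_0)$. If $u'(a)=0$ we are done. Otherwise observe that $u$ is positive just left of $a$ and negative just right of it, so $u'(a)\le 0$. Similarly, take $b\in(t_0,1)$ with $u(b)=0$ and $u>0$ on $(b,1)$; then $u'(b)\ge0$. To obtain a zero with vanishing derivative, we deform instead of hunting directly. Consider the family $h_s$ obtained by interpolating $H$ through area-preserving maps. A cleaner alternative is to use a degree or intermediate-value argument on the derivative of the restricted map at its zeros, via the one-parameter family of normalized data
\[
g_c = A_c\circ h\circ B_c,\qquad B_c(0)=ce_1,\qquad A_c(E_B(h)(ce_1))=0,
\]
with $A_c,B_c$ taken to be real-axis-preserving automorphisms. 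Both $A_c$ and $B_c$ commute with $C$ and $U(n-1)$, so $dE_B(g_c)_0$ maps $e_1$ to a real multiple $\lambda(c)e_1$, where $\lambda(c)$ is the Bergman-normalized derivative of $u$ at $c$. This quantity is continuous in $c$ by Lemma~\ref{lemma-joint-continuity-barycentric} together with the implicit-function formula. At $c=a$ we have $\lambda<0$ or $\lambda=0$, and at $c=b$ we have $\lambda\ge0$. Since $\lambda(c)$ is just $u'(c)$ multiplied by a positive factor (the ratio of Bergman scale factors), the intermediate value theorem applied to $u'$ on $[a,b]$ gives a point $c_*$ with $u'(c_*)=0$. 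However, $u(c_*)$ need not vanish, so there we use the normalization $A_{c_*}$, which moves $u(c_*)e_1$ to $0$ and does not require $u(c_*)=0$.

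Put $g=g_{c_*}$. Then $g$ is CR-quasisymmetric and CR-orientation-preserving, because $h$ is a smooth contact diffeomorphism isotopic through the construction to the identity and automorphisms preserve both properties. We also have $E_B(g)(0)=0$ by naturality. Finally, $dE_B(g)_0e_1=dA\circ dE_B(h)_{c_*e_1}\circ dB_0\,e_1$ is a positive multiple of $u'(c_*)e_1$, which equals $0$. Hence $dE_B(g)_0$ is singular, and by Proposition~\ref{first-harmonic-formula} this is equivalent to singularity of $\mathcal B_g$.

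The main point to check is that the symmetric normalizing automorphisms exist and preserve the line $\R e_1$ with positive orientation. We will take the one-parameter automorphism group $\phi_s(z)=\bigl((z_1+s)/(1+sz_1),\,\sqrt{1-s^2}\,z'/(1+sz_1)\bigr)$ with $s$ real. This map has real coefficients, commutes with $C$ and $R_U$, and restricts to an increasing map of $(-1,1)e_1$, so the chain rule gives a positive factor. The existence of $c_*$ itself is easy once it is observed that $u$ is differentiable and cannot be monotone.
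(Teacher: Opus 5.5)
Your proof is correct, but it takes a different route from the paper.

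\textbf{The paper's route.} The paper argues globally and by contradiction. Because $E_B(h)$ extends continuously with boundary value the homeomorphism $h$, it is proper. If $dE_B(h)$ were nonsingular everywhere, $E_B(h)$ would be a proper local diffeomorphism, hence a covering of the simply connected ball, hence injective. That contradicts Theorem~\ref{theorem-cr-qs-counterexample}. The argument yields an unlocated singular point $z_0$, which is then moved to the origin by arbitrary automorphisms.

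\textbf{Your route.} You stay on the symmetry axis. The function $u$ is real-analytic and not monotone, so it has a critical point $c_*$. Then $e_1\in\ker dE_B(h)_{c_*e_1}$ follows directly.

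\textbf{What each buys.} Your argument is more elementary and constructive. It uses no covering-space theory, it places the singular point on $\R e_1$, and it identifies the kernel direction explicitly. However, it relies on the symmetric structure of the tennis-ball map. The paper's argument is softer and more general: any boundary homeomorphism whose extension is non-injective yields a singular normalized differential.

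\textbf{Simplifications.} Several steps can be shortened or dropped.
\begin{enumerate}
\item The existence of $c_*$ follows at once from Rolle's theorem applied to the zeros $a\in(-1,-t_0)$ and $b\in(t_0,1)$ of $u$ already produced in the proof of Theorem~\ref{theorem-cr-qs-counterexample}.
\item Your claim that $u$ is positive just to the left of your largest zero $a$ is not justified in general. It is also not needed: negativity to the right already gives $u'(a)\le 0$.
\item The abandoned deformation $h_s$, the continuity of $\lambda(c)$, the axis-preserving automorphisms $\phi_s$, and the positivity of the scale factor are all unnecessary. Any $A,B\in\Aut(\B^n)$ with $B(0)=c_*e_1$ and $A(u(c_*)e_1)=0$ make $dE_B(g)_0$ singular by the chain rule, exactly as in the paper.
\end{enumerate}
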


\begin{proof}
Let \(h\) be the CR-quasisymmetric map from
Theorem~\ref{theorem-cr-qs-counterexample}.  Since \(E_B(h)\) extends
continuously to the closed ball with boundary value \(h\), it is proper.

If \(dE_B(h)\) were nonsingular at every point of \(\B^n\), then \(E_B(h)\)
would be a proper local diffeomorphism
\[
        \B^n\to\B^n.
\]
Since \(\B^n\) is simply connected, \(E_B(h)\) would be a global
diffeomorphism, contradicting Theorem~\ref{theorem-cr-qs-counterexample}.
Therefore there exists \(z_0\in\B^n\) such that
\[
        dE_B(h)_{z_0}
\]
is singular.

Choose automorphisms \(B,A\in\Aut(\B^n)\) satisfying
\[
        B(0)=z_0,
        \qquad
        A(E_B(h)(z_0))=0.
\]
Define
\[
        g=A\circ h\circ B|_{S^{2n-1}}.
\]
Since holomorphic automorphisms act by CR-M\"obius transformations on the
boundary, \(g\) is still CR-orientation-preserving and CR-quasisymmetric.  By
naturality,
\[
        E_B(g)=A\circ E_B(h)\circ B,
        \qquad
        E_B(g)(0)=0.
\]
Moreover,
\[
        dE_B(g)_0
        =
        dA_{E_B(h)(z_0)}
        \circ
        dE_B(h)_{z_0}
        \circ
        dB_0.
\]
The first and third factors are invertible, while the middle factor is
singular.  Hence \(dE_B(g)_0\) is singular.
\end{proof}

\subsection{Failure of non-perturbative local non-collapse}
\label{section-failure-nonperturbative}

Theorem~\ref{theorem-general-quasi-isometry} gives non-perturbative
large-scale control for the full CR-quasisymmetric class.  The counterexample
above shows that this conclusion cannot in general be upgraded to local
non-degeneracy or to a diffeomorphism theorem.

Indeed, Corollary~\ref{corollary-failure-noncollapse} gives a
CR-orientation-preserving CR-quasisymmetric boundary homeomorphism
\(g:S^{2n-1}\to S^{2n-1}\) such that
\[
        E_B(g)(0)=0,
        \qquad dE_B(g)_0\text{ is singular}.
\]
Equivalently, in the normalized first-variation formula
\[
        dE_B(g)_0=\mathcal A_g^{-1}\mathcal B_g
\]
one has
\[
        \ell(\mathcal A_g^{-1}\mathcal B_g)=0.
\]
If \(\eta_0\) is a CR-quasisymmetric distortion function for this map, then no
estimate of the form
\[
        \ell(dE_B(f)_0)\ge c(\eta_0,n)>0
\]
can hold for all normalized \(\eta_0\)-CR-quasisymmetric boundary
homeomorphisms.  Thus fixed CR-quasisymmetric distortion controls the extension
coarsely but does not control the smallest singular value of its differential.

There is no contradiction between the two conclusions.  A quasi-isometry may
identify points at uniformly bounded distance and may have a singular
differential at individual points.  Small CR cross-ratio distortion is much
more rigid: after automorphic normalization the boundary map is uniformly close
to a unitary map, and the first-variation formula then forces the differential
to be uniformly close to an isometry.  This is exactly the additional input
which yields the bi-Lipschitz diffeomorphism theorem.

\subsection*{Concluding remarks}

The Bergman ball provides a natural complex-hyperbolic setting for the
barycentric philosophy behind conformally natural extensions.  The extension
constructed here is canonical and automorphism-natural, agrees with
holomorphic automorphisms, has the prescribed boundary values, and is
real-analytic in the interior.

For arbitrary CR-quasisymmetric boundary data, the normalized quasi-M\"obius
compactness mechanism gives a global large-scale conclusion: \(E_B(f)\) is a
quasi-isometry and \(E_B(f^{-1})\) is its coarse inverse.  Under small positive
CR cross-ratio distortion, normalized boundary maps are uniformly close to
unitary maps; the explicit formula
\[
        dE_B(g)_0=\mathcal A_g^{-1}\mathcal B_g
\]
then upgrades coarse control to a Bergman bi-Lipschitz real-analytic
diffeomorphism, with bi-Lipschitz constant tending to \(1\) as the distortion
tends to zero.

The contact tennis-ball construction shows that this upgrade is genuinely
perturbative.  In the full CR-quasisymmetric class the canonical extension may
be non-injective and may have singular differential, while remaining a
quasi-isometry.  Thus the large-scale extension theorem is non-perturbative,
but local non-degeneracy is not.

\subsection*{Declaration on the use of AI-assisted tools}
During the preparation of this manuscript, the authors used OpenAI's ChatGPT
(Pro subscription) to assist with language editing, stylistic refinement, and
as a discussion aid for checking the formulation and consistency of certain
ideas. All mathematical statements, proofs, computations, references, and
conclusions were independently reviewed and verified by the authors. The
authors take full responsibility for the content of the manuscript.

\subsection*{Funding}
The authors are partially supported by the Ministry of Education, Science and Innovation of Montenegro through the grants \emph{Mathematical Analysis, Optimization and Machine Learning} and \emph{Complex-analytic and geometric techniques for non-Euclidean machine learning: theory and applications}.

\bigskip
\noindent\textsc{David Kalaj}\\
Faculty of Natural Sciences and Mathematics, University of Montenegro, Podgorica, Montenegro\\
\emph{Email address:} \href{mailto:davidkalaj@gmail.com}{davidkalaj@gmail.com}

\medskip
\noindent\textsc{Anton Gjokaj}\\
Faculty of Natural Sciences and Mathematics, University of Montenegro, Podgorica, Montenegro\\
\emph{Email address:} \href{mailto:antongjpmf@gmail.com}{antongjpmf@gmail.com}

\medskip
\noindent\textsc{Vladimir Ja\'cimovi\'c}\\
Faculty of Natural Sciences and Mathematics, University of Montenegro, Podgorica, Montenegro\\
\emph{Email address:} \href{mailto:vladimir@jacimovic.me}{vladimir@jacimovic.me}

\end{document}